\def\N{{{\Bbb N}}}
\def\Z{{{\Bbb Z}}}
\def\T{{{\Bbb T}}}
\def\R{{\Bbb R}}
\def\l{{\lambda }}
\def\a{{\alpha }}
\def\D{{\Delta }}
\def\a{{\alpha}}
\def\b{{\beta}}
\def\d{{\delta}}
\def\e{{\varepsilon}}
\def\s{{\sigma}}
\def\vp{{\varphi}}
\def\t{{\theta }}
\def\g{{\gamma }}
\def\w{{\omega }}
\def\){\right)}
\def\({\left(}
\def\sign{\operatorname{sign}}
\numberwithin{equation}{section}
\newtheorem{corollary}{Corollary}[section]
\newtheorem{lemma}{Lemma}[section]
\newtheorem{theorem}{Theorem}[section]
\newtheorem{proposition}{Proposition}[section]
\newtheorem{remark}{Remark}[section]
\def\R{\Bbb R}
\def\XXint#1#2#3{{\setbox0=\hbox{$#1{#2#3}{\int}$}
     \vcenter{\hbox{$#2#3$}}\kern-.5\wd0}}
\begin{document}

\title[Inequalities in approximation theory]{Inequalities in approximation theory involving fractional smoothness in $L_p$, $0<p<1$$^1$}

\author[Yurii
Kolomoitsev]{Yurii
Kolomoitsev$^{\text{a},\text{b},*}$}

\author[Tetiana
Lomako]{Tetiana
Lomako$^{\text{a},\text{b}}$}

\thanks{$^\text{a}$Universit\"at zu L\"ubeck,
Institut f\"ur Mathematik,
Ratzeburger Allee 160,
23562 L\"ubeck}

\thanks{$^\text{b}$Institute of Applied Mathematics and Mechanics of NAS of Ukraine,
Dobrovol's'kogo str.~1, Slov’yans’k, Donetsk region, Ukraine, 84100}

\thanks{$^1$Supported by the project AFFMA that has received funding from the European Union's Horizon 2020 research and innovation
programme under the Marie Sklodowska-Curie grant agreement No 704030.}

\thanks{$^*$Corresponding author}

\thanks{E-mail address: kolomoitsev@math.uni-luebeck.de, kolomus1@mail.ru}

\date{\today}
\subjclass[2000]{42A10, 26A33, 41A17, 41A25, 41A28} \keywords{Best approximation, trigonometric polynomials, fractional moduli of smoothness, fractional derivatives, the spaces $L_p$, $0<p<1$}

\begin{abstract}
In the paper, we study inequalities for the best trigonometric approximations and fractional moduli of smoothness involving the Weyl and Liouville-Gr\"unwald derivatives in $L_p$, $0<p<1$. We extend known inequalities to the whole range of parameters of smoothness as well as obtain several new inequalities.
As an application, the direct and inverse theorems of approximation theory involving the modulus of smoothness $\w_\b(f^{(\a)},\d)_p$, where $f^{(\a)}$ is a fractional derivative of the function $f$, are derived. A description of the class of functions with the optimal rate of decrease of a fractional modulus of smoothness is given.
\end{abstract}

\maketitle

\section{Introduction}

Let $\T\cong[0,2\pi)$ be the torus. As usual, the space
$L_p(\T)$, $0<p<\infty$, consists of measurable complex
functions that are $2\pi$-periodic  and
$$
\Vert f\Vert_p=\bigg(\frac1{2\pi}\int_{\T}|f(x)|^p {d}x\bigg)^{\frac 1p}<\infty.
$$

%


Recall that if $f\in L_1(\mathbb{T})$ and $\a\in \R$, then the fractional Weyl derivative $f^{(\a)}$ (if $\a<0$, the fractional integral)  is defined by
$$
f^{(\a)}(x)\sim\sum_{k\in \Z\setminus \{0\}} (ik)^\alpha \widehat{f}_k
e^{ikx},\quad (ik)^{\alpha}=|k|^\a e^{i\frac{\alpha\pi}{2}\textrm{sign}k},
$$
where $\widehat{f}_k=\frac1{2\pi}\int_0^{2\pi} f(x)e^{-ikx}dx$ are the Fourier coefficients of $f$.
There are several approaches to define fractional derivatives (see~\cite{SKM}). In this paper, together with the fractional Weyl derivative,  we use the fractional derivative in the sense of $L_p$ (or the Liouville-Gr\"unwald derivative).
For $f\in L_p(\T)$, $0<p< \infty$,
we define the derivative of order $\a>0$  in the sense of~$L_p$ as a function $g\in L_p(\T)$ satisfying
\begin{equation}\label{eqProizvLp}
    \bigg\Vert \frac{\D_h^\a f}{h^\a}-g\bigg\Vert_p\to
    0\quad\text{as}\quad h\to 0.
\end{equation}
As usual,
$$
\Delta_\delta^\alpha f(x)=\sum_{\nu=0}^\infty\binom{\alpha}{\nu}(-1)^\nu
f(x-\nu\d)
$$
and
$
\binom{\alpha}{\nu}=\frac{\alpha (\alpha-1)\dots
(\alpha-\nu+1)}{\nu!},\quad \nu\ge1,\quad \binom{\alpha}{0}=1.
$

Note that in the above definition if $f\in L_p(\mathbb{T})$, $1\le p<\infty$, and the function $g$ exists,  then $g$ coincides  with the fractional Weyl derivative $f^{(\a)}$ (see~\cite{BW}). Because of this, we  denote $g=f^{(\a)}$ for all  $0<p<\infty$.


Let $\mathcal{T}_n$ be the set of all trigonometric
polynomials of order at most $n$. The best approximation of a function $f$ by polynomials $T\in \mathcal{T}_n$ is given by
$$
E_n(f)_p=\inf_{T\in \mathcal{T}_n}\Vert f-T\Vert_p.
$$
As usual, if
$\Vert f-T\Vert_p=E_n(f)_p$ and $T\in \mathcal{T}_n$, then  $T$ is called a polynomial of the best approximation of $f$ in $L_p(\T)$.

Recall several known inequalities for the best trigonometric approximation of a function $f\in L_p(\T)$, $1\le p<\infty$, and its fractional derivatives of order $\a>0$. We have
\begin{equation}\label{+E}
 E_n(f)_p\leq \frac{C(\a)}{n^{\a}}E_n(f^{(\a)})_p,
\end{equation}
\begin{equation}\label{eqthsec3.A.2+}
E_n(f^{(\a)})_p\leq C(\a)\(n^\a E_n(f)_p+\sum\limits_{\nu=n+1}^\infty \nu^{\a -1}E_\nu(f)_p\),
\end{equation}
\begin{equation}\label{eqthsec3.A.2-}
\|f^{(\a)}-T_n^{(\a)}\|_p\leq C(\a)E_n(f^{(\a)})_p,
\end{equation}
where $T_n\in\mathcal{T}_n$ is such that $\|f-T_n\|_p=E_n(f)_p$. Remark that inequality~\eqref{+E} can be found in~\cite{BDGS77} and~\cite[p.~95]{Tab_book}; inequality~\eqref{eqthsec3.A.2+}, which is a (weak) inverse inequality to~\eqref{+E}, was proved in~\cite[pp.~150--153]{Tab_book} (see also~\cite{ST}); inequality~\eqref{eqthsec3.A.2-}, which is related to the simultaneous approximation of a function and its derivatives,   was derived for the case $\a\in \N$ in~\cite{CzFr}  and for the case $\a>0$ in~\cite{Tab_84}.


%
%
%

Inequalities of type \eqref{+E}--\eqref{eqthsec3.A.2-} have been  also studied in the case $0<p<1$, mainly for the derivatives of integer order. In particular, Ivanov~\cite{I} proved that if $f\in L_p(\T)$, $0<p<1$, is such that $\sum_{\nu=1}^\infty \nu^{\a p-1}E_\nu(f)_p^p<\infty$ for some $\a\in \N$, then $f$ has the derivative $f^{(\a)}$ in the sense of $L_p$ and
\begin{equation}\label{eqthsec3.A.2++}
E_n(f^{(\a)})_p\leq C(p,\a)\(n^\a E_n(f)_p+\(\sum\limits_{\nu=n+1}^\infty \nu^{\a p-1}E_\nu(f)_p^p\)^{\frac{1}{p}}\)\,.
\end{equation}
For $\a\ge 1$ and $1/2<p<1$, such result was obtained by Taberski~\cite{Tab_79}.

Concerning inequalities~\eqref{+E} and~\eqref{eqthsec3.A.2-}, it is known that in $L_p(\T)$, $0<p<1$, these inequalities  are not valid in general (see~\cite{I}, \cite{Kop95}, and~\cite{Di}). In particular, from~\cite{Kop95}, it follows that
for every $C>0$, $B\in \R$, $0<p<1$, and $n\in \N$, there exists a function $f_0\in AC(\T)$ (absolutely continuous functions) such that
\begin{equation}\label{eqKop}
  E_n(f_0)_p>Cn^B \Vert f_0'\Vert_{p}.
\end{equation}

The first positive results related to inequalities~\eqref{+E} and~\eqref{eqthsec3.A.2-} have been recently obtained in~\cite{KoArh}.  In particular, it is proved that if $\a\in\N$ and a function $f$ is such that $f^{(\a-1)}\in AC(\T)$, then
\begin{equation}\label{eqthsec3.1.2++}
  E_n(f)_p\leq\frac{C(\a,p)}{n^\a}\(E_n(f^{(\a)})_p+\(\frac1{n^{1-p}}
\sum\limits_{\nu=n+1}^\infty \frac{E_\nu(f^{(\a)})_p^p}{\nu^p}\)^{\frac{1}{p}}\).
\end{equation}
It is also shown the sharpness of the form of this inequality in the sense that ${\nu^{-p}}{E_\nu(f^{(\a)})_p^p}$ cannot be replaced by ${\nu^{-p-\e}}{E_\nu(f^{(\a)})_p^p}$ for any $\e>0$.

As a rule, problems related to the smoothness of functions in $L_p$, $0<p<1$, are essentially differ from the corresponding ones in the spaces
$L_p$, $p\ge 1$. Especially this is the case of the derivatives of fractional order. For example, the Bernstein inequality for the fractional derivatives in the case $0<p<1$ has the following form (see~\cite{BL}):
\begin{equation*}
\sup_{T_n\in \mathcal{T}_n,\,\Vert T_n\Vert_p\le 1}\Vert T_n^{(\a)}\Vert_p\asymp \left\{%
\begin{array}{ll}
n^{\a}, & \hbox{$\a\in\mathbb{N}$ or $\a\not\in\mathbb{N}$ and $\a>\frac 1p-1$,} \\
n^{\frac1p-1}\log^\frac 1p n, & \hbox{$\a=\frac 1p-1\not\in\mathbb{N}$,} \\
n^{\frac1p-1}, & \hbox{$\a\not\in\mathbb{N}$ and $\a<\frac 1p-1$,}
\end{array}%
\right.
\end{equation*}
where $\asymp$ is a two-sided inequality with absolute constants independent of $n$.
On the other hand, in the classical case $p\ge 1$,  we have $\Vert T_n^{(\a)}\Vert_p\le C(\a)n^\a\Vert T_n\Vert_p$ for any $\a>0$ (see, e.g.,~\cite{Ci}, \cite[Ch.~4, \S~19]{SKM}).
Other interesting "pathological" properties related to the smoothness of functions in the spaces $L_p$, $0<p<1$, can be found, e.g., in~\cite{DHI}, \cite{DiTi07}, \cite{Kr}, \cite{peetre}, \cite{SKO75}.

Now, let us
consider counterparts of inequalities~\eqref{+E} and~\eqref{eqthsec3.A.2+} for fractional moduli of smoothness.  Recall that the fractional modulus of smoothness of order $\alpha>0$ for a function $f\in L_p(\mathbb{T})$
is given by
\begin{equation}\label{eqmod1}
    \omega_\alpha(f,h)_p=\sup_{|\delta|<h} \Vert \Delta_\delta^\alpha f\Vert_p.
\end{equation}
For the first time, the modulus~\eqref{eqmod1} appeared in 1970's (see~\cite[p.~788]{BDGS77} and~\cite{tabeR}). At present, fractional moduli of smoothness are extensively studied and have several important applications to sharp inequalities of different metrics and embedding theorems (see~\cite{K11}, \cite{RS3}, \cite{Ru17}, \cite{ST}, \cite{ST10}, \cite{Ti05}, see also the monograph~\cite{PST2016} and the literature therein).

It is known that for any $f\in L_p(\T)$, $1\le p<\infty$, and $\a,\b>0$, the following two inequalities are fulfilled:
\begin{equation}\label{eqnerm1-}
    \w_{\b+\a}\(f,\d\)_p\le C\d^\a\w_\b
    (f^{(\a)},\d)_p,
\end{equation}
\begin{equation}\label{eqthModFrD.1-}
    \w_{\b}(f^{(\a)},\d)_p\le C\(\int_0^\d
    \frac{\w_{\b+\a}(f,t)_p^\t}{t^{\a\t+1}}dt\)^\frac1\t,
\end{equation}
where $\t=\min(2,p)$ and the constant $C$ is independent of $f$ and $\d$.
Remark that inequality~\eqref{eqnerm1-} can be found, e.g., in~\cite{BDGS77}; inequality~\eqref{eqthModFrD.1-} is proved in~\cite{ST} (see also~\cite{johnen} and~\cite{DiTi07} for $\a,\b\in \N$).

It turns out that in the case $0<p<1$, inequalities~\eqref{eqnerm1-} and~\eqref{eqthModFrD.1-} have been studied only for integer parameters $\a$ and $\b$. At that, in this case ($\a,\b\in \N$ and $0<p<1$), the analogue of~\eqref{eqthModFrD.1-} is known and its form coincides with~\eqref{eqthModFrD.1-}   (see~\cite{DiTi07}). In contrast with~\eqref{eqthModFrD.1-}, inequality~\eqref{eqnerm1-} is not valid if $0<p<1$. As it was mentioned in~\cite[p.~188]{PePo},
''\emph{there is no upper estimate of $\w_{k}(f,\d)_p$ by $\w_{k-1}(f^\prime,\d)_p$ in the case $0<p<1$}''. However, the modulus $\w_{k}(f,\d)_p$  can be estimated from above by means of a certain integral expression related to~\eqref{eqthsec3.1.2++} with the modulus $\w_{k-1}(f^\prime,\nu^{-1})_p$ instead of the corresponding best approximation  (see~\cite{KoArh}). 

In this paper, we obtain analogous of inequalities~\eqref{eqthsec3.A.2-}, \eqref{eqthsec3.A.2++}, and~\eqref{eqthsec3.1.2++} as well as~\eqref{eqnerm1-} and~\eqref{eqthModFrD.1-} for any $f\in L_p(\T)$, $0<p<1$, and any admissible parameters $\a,\b>0$ (see Theorems~\ref{thsec3.1}--\ref{thModFrD10}).

As an application of inequalities of type~\eqref{eqthsec3.A.2++} and~\eqref{eqthsec3.1.2++}, we derive the direct and inverse theorems of the approximation theory involving the modulus of smoothness $\w_\b(f^{(\a)},\d)_p$ (see Theorems~\ref{thsec3.1++++}, \ref{thsec3.A++++}, and~\ref{-thsec3.A++++}). At the same time, corresponding analogues of~\eqref{eqthModFrD.1-} and~\eqref{eqthsec3.1.2++} are applied to describe the class of functions with the optimal rate of decrease of $\w_\b(f,\d)_p$ in the case $0<p<1$ (see Theorem~\ref{th2}). 

\section{Main results}

\subsection{Inequalities for the best approximation}
We start this section with the following counterpart of inequality~\eqref{+E}
in the case  $0<p<1$.

\begin{theorem}\label{thsec3.1}
{\it Let $0<p<1$, $\a>0$, and let $f$ be such that $f,f^{(\a)}\in L_1(\T)$.
Then for any $n\in \N$ we have
\begin{equation}\label{eqthsec3.1.2}
  E_n(f)_p\leq\frac{C}{n^\a}\(E_n(f^{(\a)})_p+\(\frac1{n^{1-p}}\sum\limits_{\nu=n+1}^\infty\frac{E_\nu(f^{(\a)})_p^p}{\nu^{p}}\)^{\frac{1}{p}}\)\,,
\end{equation}
where $C$ is a constant independent of $f$ and $n$.}
\end{theorem}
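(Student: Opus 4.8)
The plan is to realize $f$ as a telescoping sum of polynomials obtained from the best approximations of $f^{(\a)}$, and then apply a Bernstein-type inequality for the fractional \emph{integral} (i.e.\ negative-order derivative) of trigonometric polynomials in $L_p$, $0<p<1$. Concretely, let $t_m\in\mathcal T_{2^m}$ satisfy $\|f^{(\a)}-t_m\|_p=E_{2^m}(f^{(\a)})_p$, and write $t_0+\sum_{m\ge1}(t_m-t_{m-1})$ for $f^{(\a)}$ (convergence in $L_p$ follows from $f^{(\a)}\in L_1\subset$ closure of polynomials, together with $E_\nu(f^{(\a)})_p\to0$). Since $f^{(\a)}$ has mean zero by definition of the Weyl derivative, each block $Q_m:=t_m-t_{m-1}\in\mathcal T_{2^m}$ can be assumed to have zero constant term, so its fractional integral of order $\a$, namely $Q_m^{(-\a)}$, is again a well-defined trigonometric polynomial in $\mathcal T_{2^m}$; moreover $\big(f-P\big)^{(\a)}=f^{(\a)}$ where $P:=t_0^{(-\a)}+\sum_m Q_m^{(-\a)}$, so $P$ recovers $f$ up to an additive constant, and $E_n(f)_p\le\|f-P_N\|_p$ for suitable partial sums $P_N$.

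The key estimate is a Bernstein-type bound of the form $\|R^{(-\a)}\|_p\le C\, 2^{-m\a}\,\|R\|_p$ for $R\in\mathcal T_{2^m}$ with zero constant term and $\a>0$; since the multiplier symbol $(ik)^{-\a}$ is \emph{smoothing} this inequality holds for all $0<p<1$ (it is the ``easy'' direction, in contrast with the differentiation inequality which fails, cf.\ \eqref{eqKop}). Granting this, I would estimate, using the $p$-triangle inequality (quasi-norm) $\|\cdot\|_p^p$ additivity,
\begin{equation*}
E_n(f)_p^p\le \Big\|\sum_{2^m> n}Q_m^{(-\a)}+\big(\text{tail piece from the block containing }n\big)\Big\|_p^p\le C\sum_{2^m\ge n}2^{-m\a p}\|Q_m\|_p^p.
\end{equation*}
Now $\|Q_m\|_p=\|t_m-t_{m-1}\|_p\le C\big(E_{2^{m-1}}(f^{(\a)})_p+E_{2^m}(f^{(\a)})_p\big)\le C E_{2^{m-1}}(f^{(\a)})_p$, and one splits the resulting sum into the first term $m\sim\log_2 n$, which yields the main term $Cn^{-\a p}E_n(f^{(\a)})_p^p$, and the remaining terms. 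For the latter, a standard comparison of the dyadic sum $\sum_{2^m>n}2^{-m\a p}E_{2^m}(f^{(\a)})_p^p$ with the continuous sum $\sum_{\nu>n}\nu^{-1}\nu^{-\a p}E_\nu(f^{(\a)})_p^p$ (using monotonicity of $E_\nu$ in $\nu$) and then extracting $n^{-\a p}$ gives exactly the term $n^{-\a p}\cdot n^{-(1-p)}\sum_{\nu>n}\nu^{-p}E_\nu(f^{(\a)})_p^p$ after the identity $\a p+1-(1-p)=\a p+p$; one checks the exponents match so that the dyadic-to-continuous passage produces $\nu^{-p}$ in the denominator as claimed. Taking $p$-th roots yields \eqref{eqthsec3.1.2}.

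The main obstacle I anticipate is proving the Bernstein-type inequality $\|R^{(-\a)}\|_p\le C 2^{-m\a}\|R\|_p$ for $0<p<1$ with a constant independent of $m$ and uniform in the relevant class of polynomials: one cannot just invoke Fourier multiplier theory as for $p\ge1$. The standard route is to exhibit $R^{(-\a)}$ as a convolution $R*\mu_m$ where $\mu_m$ is (a dilate of) a fixed kernel whose $L_p$ quasi-norm, or rather the $\ell_p$-type norm of its Fourier-localized pieces, is controlled — e.g.\ using a smooth partition of unity on the frequency side, writing $(ik)^{-\a}=2^{-m\a}\psi(k/2^m)$ with $\psi$ smooth and compactly supported away from $0$, and applying the known fact that such Fourier multipliers are bounded on $L_p(\T)$, $0<p<1$, when restricted to $\mathcal T_{2^m}$ with norm independent of $m$ (this is available in the references on Bernstein-type inequalities for fractional derivatives, e.g.\ \cite{BL}, and in \cite{KoArh} for the integer case; I expect to adapt that machinery). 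A secondary technical point is the careful bookkeeping of the block containing the index $n$ (when $n$ is not a power of $2$) so that the $E_n(f^{(\a)})_p$ term, and not merely $E_{2^{\lceil\log_2 n\rceil}}(f^{(\a)})_p$, appears — this is routine given monotonicity of best approximations.
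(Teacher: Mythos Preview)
Your central lemma is false. The inequality $\|R^{(-\a)}\|_p\le C\,2^{-m\a}\|R\|_p$ cannot hold for all $R\in\mathcal T_{2^m}$ with zero mean: take $R(x)=e^{ix}$, for which $\|R^{(-\a)}\|_p=\|R\|_p$ regardless of $m$. The multiplier $(ik)^{-\a}$ is large at \emph{low} frequencies, and your blocks $Q_m=t_m-t_{m-1}$ have no spectral gap near the origin (they are arbitrary elements of $\mathcal T_{2^m}$, not supported in $|k|\asymp 2^m$), so the suggested rescue via ``$(ik)^{-\a}=2^{-m\a}\psi(k/2^m)$ with $\psi$ supported away from $0$'' does not apply to them. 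A further red flag: if your inequality were true, your argument would yield $E_n(f)_p^p\le C\sum_{\nu>n}\nu^{-\a p-1}E_\nu(f^{(\a)})_p^p$, which is strictly stronger than~\eqref{eqthsec3.1.2} (since $\nu^{-\a p-1}\le n^{-\a p-1+p}\nu^{-p}$ for $\nu>n$) and in particular would give $E_n(f)_p\le Cn^{-\a}\sup_\nu E_\nu(f^{(\a)})_p$, contradicting~\eqref{eqKop} and the sharpness remarks after~\eqref{eqthsec3.1.2++}.

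The paper avoids any ``inverse Bernstein'' for fractional integrals. It takes $U_n$ the best approximants of $f^{(\a)}$, sets $T_n^{(\a)}=U_n-\widehat{(U_n)}_0$, and estimates $E_n(T_{2^{\mu+1}}-T_{2^\mu})_p$ not by $\|(\cdot)\|_p$ but by the Jackson inequality at scale $1/n$, then passes through the realization $\mathcal R_\a$ and its scaling property (Lemma~\ref{LemKfunc+}): $\mathcal R_\a(\tau,1/n)_p\le C(2^\mu/n)^{\a+\frac1p-1}\mathcal R_\a(\tau,2^{-\mu})_p\le C(2^\mu/n)^{\a+\frac1p-1}2^{-\mu\a}\|\tau^{(\a)}\|_p$. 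It is precisely the factor $(2^\mu/n)^{\a+\frac1p-1}$, absent from your estimate, that produces the weight $\nu^{-p}$ and the prefactor $n^{p-1}$ in the sum; the hypothesis $f,f^{(\a)}\in L_1(\T)$ is used (via the Nikol'skii inequality) to show the dyadic tails $T_{2^N}\to f$ in $L_1$, closing the telescoping argument.
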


\begin{remark}\label{Rem_17}

\emph{1) Inequality~\eqref{eqKop} implies that Theorem~\ref{thsec3.1} is not valid without the second summand in right-hand side of~\eqref{eqthsec3.1.2}.}

\emph{2) In Theorem~\ref{thsec3.1}, the assumption $f,f^{(\a)}\in L_1(\T)$   cannot be replaced by the much weaker assumption of existence of the derivative $f^{(\a)}$ in the sense of $L_p(\T)$.
Indeed, let us consider the function $f(x)={\sign}\sin(x)$. We have that $f$ has the derivative in the sense of $L_p$, $0<p<1$, and $f'(x)=0$ a.e. Thus, in the case $\a=1$,  the right-hand side of~\eqref{eqthsec3.1.2} is zero while $E_n(f)_p>0$ for all $n\in\N$ which is impossible.
Moreover, it follows from the proof of Theorem~\ref{thsec3.1} that  the convergence of the series in~\eqref{eqthsec3.1.2} implies that $f^{(\a)}\in L_1(\T)$.}

\end{remark}

The next theorem gives an inverse inequality to (\ref{eqthsec3.1.2}).

\begin{theorem}\label{thsec3.A}
Let $f\in L_p(\mathbb{T})$, $0<p<1$, and let for some $\a\in \N\cup(1/p-1,\infty)$
\begin{equation}\label{eqthsec3.A.1}
\sum\limits_{\nu=1}^\infty \nu^{\a p-1}E_\nu(f)_p^p<\infty\,.
\end{equation}
Then $f$ has the derivative $f^{(\a)}$ in the sense of $L_p$ and for any $n\in\N$
\begin{equation}\label{eqthsec3.A.2}
\|f^{(\a)}-T_n^{(\a)}\|_p\leq C\(n^\a E_n(f)_p+\(\sum\limits_{\nu=n+1}^\infty \nu^{\a p-1}E_\nu(f)_p^p\)^{\frac{1}{p}}\)\,,
\end{equation}
where $T_n\in\mathcal{T}_n$ is such that $\|f-T_n\|_p=E_n(f)_p$ and $C$ is a constant independent of $f$ and $n$.
\end{theorem}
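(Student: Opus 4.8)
The plan is to prove Theorem~\ref{thsec3.A} by constructing $f^{(\a)}$ as the $L_p$-limit of the derivatives $T_{2^k}^{(\a)}$ of polynomials of near-best approximation, and then controlling the tail and the rate of convergence via the Bernstein-type inequality for fractional derivatives in $L_p$ together with the standard $p$-triangle inequality $\|g+h\|_p^p\le\|g\|_p^p+\|h\|_p^p$.

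First I would fix dyadic near-best approximants: let $t_k\in\mathcal T_{2^k}$ with $\|f-t_k\|_p\le C E_{2^k}(f)_p$, and write $f=t_0+\sum_{k\ge0}(t_{k+1}-t_k)$ in $L_p$, which is legitimate once we know the series converges — and condition~\eqref{eqthsec3.A.1} gives exactly that, since $\|t_{k+1}-t_k\|_p^p\lesssim E_{2^k}(f)_p^p$ is summable. Then I would formally differentiate term by term and estimate $\|(t_{k+1}-t_k)^{(\a)}\|_p$ using the Bernstein inequality recalled in the introduction. Here the hypothesis $\a\in\N\cup(1/p-1,\infty)$ is crucial: precisely in this range the Bernstein inequality reads $\|T_n^{(\a)}\|_p\le C n^\a\|T_n\|_p$, so that $\|(t_{k+1}-t_k)^{(\a)}\|_p\lesssim 2^{(k+1)\a}E_{2^k}(f)_p$, and $\sum_k 2^{k\a p}E_{2^k}(f)_p^p\asymp\sum_\nu \nu^{\a p-1}E_\nu(f)_p^p<\infty$. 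This shows $\sum_k(t_{k+1}-t_k)^{(\a)}$ converges in $L_p$ to some $g\in L_p(\T)$; a separate short argument (using that the $t_k$ converge in $L_1$ as well, which follows from $f\in L_1$ once we note $E_n(f)_1\lesssim E_n(f)_p$ is false in general — instead one should pass through the Fourier side) identifies $g$ with the Weyl derivative $f^{(\a)}$ and verifies it is the $L_p$-derivative in the sense of~\eqref{eqProizvLp}. The cleanest route for the identification is to check that the Fourier coefficients of $g$ are $(ik)^\a\widehat f_k$, which is immediate since each $t_k$ is a trigonometric polynomial and Fourier coefficients are continuous under $L_p$-convergence through the $L_1$ embedding after truncation; alternatively one cites~\cite{BW} as in the introduction once the $L_p$-limit defining property is established.

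Next, for the quantitative estimate~\eqref{eqthsec3.A.2}, I would take the best approximant $T_n$ and split
$$
\|f^{(\a)}-T_n^{(\a)}\|_p^p \le \|T_n^{(\a)}-t_m^{(\a)}\|_p^p + \|f^{(\a)}-t_m^{(\a)}\|_p^p
$$
for a suitable dyadic index $m$ with $2^{m-1}\le n<2^m$ (or simply compare $T_n$ with the telescoping tail directly). The tail $\|f^{(\a)}-t_m^{(\a)}\|_p^p=\|\sum_{k\ge m}(t_{k+1}-t_k)^{(\a)}\|_p^p\le\sum_{k\ge m}\|(t_{k+1}-t_k)^{(\a)}\|_p^p\lesssim\sum_{k\ge m}2^{k\a p}E_{2^k}(f)_p^p\asymp\sum_{\nu>n}\nu^{\a p-1}E_\nu(f)_p^p$, which is the second term on the right of~\eqref{eqthsec3.A.2}. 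For the first term, $T_n^{(\a)}-t_m^{(\a)}=(T_n-t_m)^{(\a)}$ with $T_n-t_m\in\mathcal T_{2^m}$, so Bernstein gives $\|(T_n-t_m)^{(\a)}\|_p\lesssim 2^{m\a}\|T_n-t_m\|_p\lesssim n^\a(\|f-T_n\|_p+\|f-t_m\|_p)\lesssim n^\a E_n(f)_p$, using $E_{2^m}(f)_p\le E_n(f)_p$ since $2^{m-1}\le n$ needs a tiny adjustment — in fact one should pick $m$ with $2^m\le n<2^{m+1}$ so that $E_{2^m}(f)_p\le E_n(f)_p$ and then absorb the missing dyadic blocks between $2^m$ and $n$ into a single polynomial in $\mathcal T_n$, applying Bernstein once more. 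Combining the two bounds and taking $p$-th roots yields~\eqref{eqthsec3.A.2}.

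The main obstacle I anticipate is the identification of the $L_p$-limit $g$ with the genuine $L_p$-derivative $f^{(\a)}$ as defined by~\eqref{eqProizvLp}, i.e. showing $\|\D_h^\a f/h^\a - g\|_p\to0$, rather than merely producing a function with the right Fourier coefficients; this requires either uniform (in $h$) control of $\|\D_h^\a(f-t_k)/h^\a\|_p$ via a direct-type estimate $\|\D_h^\a T\|_p\lesssim (h\,\deg T)^\a\|T\|_p$ for $h\,\deg T\le1$ combined with $\|\D_h^\a T\|_p\lesssim\|T\|_p$ for $h\,\deg T\ge1$, summed dyadically — again exactly where the restriction on $\a$ enters — or an appeal to the Bari–Stechkin / Weyl-derivative machinery from~\cite{BW} once the polynomial approximation to $g$ is in hand. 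Everything else is a routine combination of the $p$-inequality, dyadic summation, and the Bernstein inequality for fractional derivatives already quoted in the introduction.
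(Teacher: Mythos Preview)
Your approach for the quantitative estimate~\eqref{eqthsec3.A.2} is essentially the paper's: dyadic telescoping plus the Bernstein inequality of Lemma~\ref{lemBLf} (valid precisely for $\a\in\N\cup(1/p-1,\infty)$) gives both the tail $\sum_{\nu>n}\nu^{\a p-1}E_\nu(f)_p^p$ and the term $n^\a E_n(f)_p$, modulo the index adjustment you already flag.

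The genuine gap is the identification $g=f^{(\a)}$ in the sense of~\eqref{eqProizvLp}. Your Fourier-coefficient route fails on two counts: for $0<p<1$ the hypothesis~\eqref{eqthsec3.A.1} does not in general force $f\in L_1(\T)$ (take $\a\in\N$ with $\a<1/p-1$), so $\widehat f_k$ need not even be defined, and $L_p$-convergence does not imply convergence of Fourier coefficients since $(L_p)^*$ is trivial; moreover, matching Fourier coefficients would yield only the Weyl derivative, not the Liouville--Gr\"unwald limit~\eqref{eqProizvLp}. The appeal to~\cite{BW} covers only $p\ge1$. Your dyadic route, after the split
\[
\Big\|\frac{\D_h^\a f}{h^\a}-g\Big\|_p^p\le\Big\|\frac{\D_h^\a(f-T_n)}{h^\a}\Big\|_p^p+\Big\|\frac{\D_h^\a T_n}{h^\a}-T_n^{(\a)}\Big\|_p^p+\|T_n^{(\a)}-g\|_p^p
\]
with $n=n(h)$, leaves the middle term; with $hn\asymp1$ this quantity is scale-invariant and \emph{not} small, so the ``Nikol'skii--Stechkin plus boundedness'' dichotomy you propose does not close the argument. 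The paper fills this gap with the dedicated Lemma~\ref{lem1w}: one takes $hn\asymp\e^\l$ with $0<\l<1/\a$ (so the first term is $O(\e^{(1-\l\a)p})$ via $\|f-T_n\|_p=o(n^{-\a})$) and then proves the multiplier-type bound $\|\D_h^\a T_n/h^\a-T_n^{(\a)}\|_p\le C\e^\g\|T_n^{(\a)}\|_p$ for some $\g>0$, writing the difference as a convolution with a compactly supported symbol and invoking the quantitative $L_p$-Fourier estimates of Lemmas~\ref{lemBL} and~\ref{lemFurp}. This Fourier-analytic step is the technical core of the proof and is not routine for $p<1$.
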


Remark that in the case $\a\in \N$,  Theorem~\ref{thsec3.A} was proved in~\cite{I} while the case $1/2<p<1$ and $\a\ge 1$ was considered in~\cite{Tab_79}.

Under additional restrictions on the function $f$ in Theorem~\ref{thsec3.A}, it is possible to obtain an~analogue of inequality~\eqref{eqthsec3.A.2} for any $\a>0$.

\begin{theorem}\label{propanal1}
 Let $0<p<1$, $\a>0$, and let $f$ be such that $f,f^{(\a)}\in L_1(\T)$.
Then for any $n\in \N$ we have
\begin{equation}\label{eqthsec3.A.2+A}
\|f^{(\a)}-T_n^{(\a)}\|_p\leq C\(\s_{\a,p}(n) E_n(f)_p+\(\sum\limits_{\nu=n+1}^\infty (\s_{\a,p}(\nu))^p\nu^{-1}E_\nu(f)_p^p\)^{\frac{1}{p}}\)\,,
\end{equation}
where
$T_n\in\mathcal{T}_n$ is such that $\|f-T_n\|_p=E_n(f)_p$,
\begin{equation*}
\s_{\a,p}(n)=\left\{%
\begin{array}{ll}
n^{\a}, & \hbox{$\a\in\mathbb{N}$ or $\a\not\in\mathbb{N}$ and $\a>\frac 1p-1$,} \\
n^{\frac1p-1}\log^\frac 1p (n+1), & \hbox{$\a=\frac 1p-1\not\in\mathbb{N}$,} \\
n^{\frac1p-1}, & \hbox{$\a\not\in\mathbb{N}$ and $\a<\frac 1p-1$,}  \\
\end{array}%
\right.
\end{equation*}
and $C$ is a constant independent of $f$ and $n$.
\end{theorem}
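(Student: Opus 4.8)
<br>

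The plan is to prove Theorem~\ref{propanal1} by combining the Bernstein-type inequality for fractional derivatives in $L_p$, $0<p<1$, with a dyadic decomposition of the polynomial of best approximation, in the same spirit as the proof of Theorem~\ref{thsec3.A}. First I would fix $T_n\in\mathcal{T}_n$ with $\|f-T_n\|_p=E_n(f)_p$ and, for $j\ge 0$, let $T_{2^jn}$ be a polynomial of best approximation to $f$ of order $2^jn$; set $U_j=T_{2^{j+1}n}-T_{2^jn}\in\mathcal{T}_{2^{j+1}n}$, so that $\|U_j\|_p^p\le c_p(E_{2^jn}(f)_p^p+E_{2^{j+1}n}(f)_p^p)\le 2c_pE_{2^jn}(f)_p^p$. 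Under the hypothesis $f,f^{(\a)}\in L_1(\T)$ one checks, exactly as in the proof of Theorem~\ref{thsec3.A} (this is where the assumption $f^{(\a)}\in L_1$ rather than merely existence in $L_p$ is used), that $T_{2^jn}^{(\a)}\to f^{(\a)}$ in $L_p$ and that the derivative in the sense of $L_p$ exists; hence
\begin{equation*}
\|f^{(\a)}-T_n^{(\a)}\|_p^p\le \sum_{j=0}^\infty\|U_j^{(\a)}\|_p^p.
\end{equation*}
Now apply the Bernstein inequality for fractional derivatives in $L_p$, $0<p<1$, quoted in the introduction (from~\cite{BL}): $\|U_j^{(\a)}\|_p\le C\,\s_{\a,p}(2^{j+1}n)\|U_j\|_p$, where $\s_{\a,p}$ is precisely the three-case function appearing in the statement (the logarithmic case $\a=1/p-1\notin\N$ is why the $\log^{1/p}(n+1)$ term must be present).

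Substituting this in and using $\|U_j\|_p^p\le 2c_pE_{2^jn}(f)_p^p$ gives
\begin{equation*}
\|f^{(\a)}-T_n^{(\a)}\|_p^p\le C\sum_{j=0}^\infty \big(\s_{\a,p}(2^{j+1}n)\big)^p E_{2^jn}(f)_p^p.
\end{equation*}
The $j=0$ term contributes $C(\s_{\a,p}(2n))^pE_n(f)_p^p\le C(\s_{\a,p}(n))^pE_n(f)_p^p$ (since $\s_{\a,p}(2n)\le C\s_{\a,p}(n)$ in all three cases, the log case included), which yields the first summand on the right of~\eqref{eqthsec3.A.2+A} after taking $p$-th roots. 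For the tail $j\ge 1$, I would pass from the lacunary sum back to a full sum: since $E_\nu(f)_p$ is nonincreasing in $\nu$ and $\s_{\a,p}$ is, up to a constant, monotone on each block, for $\nu\in(2^{j-1}n,2^jn]$ we have $E_{2^jn}(f)_p\le E_\nu(f)_p$ and $\s_{\a,p}(2^{j+1}n)\le C\s_{\a,p}(\nu)$ and $\#\{\nu\in(2^{j-1}n,2^jn]\}\asymp 2^{j-1}n\asymp \nu$, so
\begin{equation*}
\big(\s_{\a,p}(2^{j+1}n)\big)^p E_{2^jn}(f)_p^p\le \frac{C}{2^{j-1}n}\sum_{2^{j-1}n<\nu\le 2^jn}\big(\s_{\a,p}(\nu)\big)^p E_\nu(f)_p^p\le C\sum_{2^{j-1}n<\nu\le 2^jn}\big(\s_{\a,p}(\nu)\big)^p\nu^{-1} E_\nu(f)_p^p.
\end{equation*}
Summing over $j\ge 1$ telescopes the index ranges into $\sum_{\nu>n}$, and taking $p$-th roots produces exactly the second summand in~\eqref{eqthsec3.A.2+A}; the crude bound $(a+b)^{1/p}\le 2^{1/p}(a^{1/p}+b^{1/p})$ then gives the stated form.

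The main obstacle is the justification that, under $f,f^{(\a)}\in L_1(\T)$, the partial sums $T_{2^jn}^{(\a)}$ converge to $f^{(\a)}$ in $L_p$, so that the telescoping identity for $\|f^{(\a)}-T_n^{(\a)}\|_p$ is legitimate; this requires knowing that $f^{(\a)}$ (the Weyl derivative, which equals the $L_p$ derivative when it exists) is well approximated by the $\mathcal{T}_m$, and that no boundary/convergence pathology of the type in Remark~\ref{Rem_17}(2) occurs. I expect this step to be available from the machinery already developed for Theorems~\ref{thsec3.1} and~\ref{thsec3.A} — in particular the observation, noted in Remark~\ref{Rem_17}, that convergence of the relevant series forces $f^{(\a)}\in L_1(\T)$, which together with the Bernstein bound gives convergence of $\sum_j\|U_j^{(\a)}\|_p^p$ and hence of $\sum_j U_j^{(\a)}$ in $L_p$ to $f^{(\a)}-T_n^{(\a)}$. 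Everything else is the routine dyadic bookkeeping sketched above, with the only delicate point being to carry the logarithmic factor through the case $\a=1/p-1\notin\N$ consistently, which is handled automatically since $\s_{\a,p}$ is built into both the Bernstein inequality and the final bound.
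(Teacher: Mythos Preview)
Your proposal is correct and follows the same approach as the paper: the paper's proof is literally one sentence, stating that the argument is the same as for Theorem~\ref{thsec3.A} but using the Bernstein inequality of Lemma~\ref{lemBLf} for all $\a>0$, which is precisely your dyadic decomposition combined with the $\s_{\a,p}$-weighted Bernstein bound and the standard passage from the lacunary to the full sum. The identification step you flag as the main obstacle is left implicit in the paper as well; the extra hypothesis $f^{(\a)}\in L_1(\T)\subset L_p(\T)$ is what stands in for Lemma~\ref{lem1w} in the range $\a\le 1/p-1$, $\a\notin\N$, and your instinct that this is where that assumption is spent is exactly right.
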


Combining Theorems~\ref{thsec3.1} and~\ref{propanal1}, we derive a positive result about the simultaneous approximation of functions and their derivatives in the spaces $L_p(\T)$, $0<p<1$.  

\begin{theorem}\label{thsec3.2}
{\it Let $0<p<1$, $\a>0$, and let $f$ be such that $f,f^{(\a)}\in L_1(\T)$. 
Then for any $n\in\N$ we have
\begin{equation}\label{eqthsec3.2.2}
  \|f^{(\a)}-T_n^{(\a)}\|_p\leq C\rho(n)\(E_n(f^{(\a)})_p+\(\frac1{n^{1-p}}\sum\limits_{\nu=n+1}^\infty\frac{E_\nu(f^{(\a)})_p^p}{\nu^{p}}\)^{\frac{1}{p}}\)\,,
\end{equation}
where $T_n\in\mathcal{T}_n$ is such that $\|f-T_n\|_p=E_n(f)_p$,
\begin{equation*}
\rho_{\a,p}(n)=\left\{%
\begin{array}{ll}
1, & \hbox{$\a\in\mathbb{N}$ or $\a\not\in\mathbb{N}$ and $\a>\frac 1p-1$,} \\
\log^\frac 1p (n+1), & \hbox{$\a=\frac 1p-1\not\in\mathbb{N}$,} \\
n^{\frac1p-1-\a}, & \hbox{$\a\not\in\mathbb{N}$ and $\a<\frac 1p-1$,}  \\
\end{array}%
\right.
\end{equation*}
and $C$ is a constant independent of $f$ and $n$.
}
\end{theorem}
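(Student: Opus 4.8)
The plan is to derive Theorem~\ref{thsec3.2} by combining the two preceding theorems in a bootstrapping fashion, exploiting the fact that the quantities governing the simultaneous approximation of $f$ in Theorem~\ref{propanal1} are expressed through the best approximations $E_\nu(f)_p$, which in turn can be controlled by $E_\nu(f^{(\a)})_p$ via Theorem~\ref{thsec3.1}. More precisely, I would first note that under the hypothesis $f,f^{(\a)}\in L_1(\T)$, Theorem~\ref{propanal1} applies and gives
\begin{equation*}
\|f^{(\a)}-T_n^{(\a)}\|_p\leq C\(\s_{\a,p}(n)E_n(f)_p+\(\sum_{\nu=n+1}^\infty(\s_{\a,p}(\nu))^p\nu^{-1}E_\nu(f)_p^p\)^{\frac1p}\),
\end{equation*}
so the task reduces to estimating $E_\nu(f)_p$ for all $\nu\ge n$ in terms of the right-hand side of~\eqref{eqthsec3.2.2}. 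For each such $\nu$, Theorem~\ref{thsec3.1} yields
\begin{equation*}
E_\nu(f)_p\leq\frac{C}{\nu^\a}\(E_\nu(f^{(\a)})_p+\(\frac1{\nu^{1-p}}\sum_{\mu=\nu+1}^\infty\frac{E_\mu(f^{(\a)})_p^p}{\mu^p}\)^{\frac1p}\).
\end{equation*}

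The next step is to substitute this bound into the Theorem~\ref{propanal1} estimate and carry out the resulting double-sum manipulations. For the first term, $\s_{\a,p}(n)E_n(f)_p\le C\s_{\a,p}(n)n^{-\a}(\cdots)=C\rho_{\a,p}(n)(\cdots)$, where one reads off directly from the definitions that $\s_{\a,p}(n)n^{-\a}=\rho_{\a,p}(n)$ in each of the three cases; this already produces the claimed prefactor $\rho(n)$ on the first summand and, after monotonicity of the tail sum in $n$, on a piece of the second summand as well. For the series term, using the $p$-subadditivity of $\|\cdot\|_p^p$ I would split
\begin{equation*}
\sum_{\nu=n+1}^\infty(\s_{\a,p}(\nu))^p\nu^{-1}E_\nu(f)_p^p\le C\sum_{\nu=n+1}^\infty(\s_{\a,p}(\nu))^p\nu^{-1-\a p}\(E_\nu(f^{(\a)})_p^p+\frac1{\nu^{1-p}}\sum_{\mu=\nu+1}^\infty\frac{E_\mu(f^{(\a)})_p^p}{\mu^p}\).
\end{equation*}
The first part is $\sum_\nu(\rho_{\a,p}(\nu))^p\nu^{-1}E_\nu(f^{(\a)})_p^p$, which since $\rho_{\a,p}(\nu)\le C\rho_{\a,p}(n)\nu^{1-p}/n^{1-p}$ (checking the three cases: $\rho\equiv1$ is constant, $\log^{1/p}$ grows slower than any power, and $\nu^{1/p-1-\a}\le\nu^{1-p}\cdot\nu^{1/p-2-\a+p}$ with the exponent negative precisely when $\a>1/p-2+p$, which holds in the range $\a<1/p-1$ for $0<p<1$ — this case comparison is the delicate bookkeeping) is dominated by $C(\rho_{\a,p}(n))^pn^{-(1-p)}\sum_\nu\nu^{-p}E_\nu(f^{(\a)})_p^p$, as required.

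For the iterated sum I would interchange the order of summation, writing $\sum_{\nu=n+1}^\infty\sum_{\mu=\nu+1}^\infty=\sum_{\mu=n+2}^\infty\sum_{\nu=n+1}^{\mu-1}$, and carry out the inner sum $\sum_{\nu=n+1}^{\mu-1}(\s_{\a,p}(\nu))^p\nu^{-2-\a p+p}$; since the summand decays like a power with exponent $<-1$ in the two principal cases (and only borderline-logarithmically in the critical case, absorbed by the $\log$ factor in $\rho$), this inner sum is bounded by $C(\s_{\a,p}(n))^pn^{-1-\a p}$ up to the appropriate $\log$, hence by $C(\rho_{\a,p}(n))^pn^{-(1-p)}n^{-p}\cdot n^{p-1}\cdots$ — after collecting powers one again lands on $C(\rho_{\a,p}(n))^pn^{-(1-p)}\sum_{\mu=n+2}^\infty\mu^{-p}E_\mu(f^{(\a)})_p^p$. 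Taking $p$-th roots throughout and combining the two summands gives~\eqref{eqthsec3.2.2}. The main obstacle I anticipate is not any single estimate but the uniform handling of the three regimes for $\s_{\a,p}$ and $\rho_{\a,p}$ simultaneously: one must verify in each case that the power/logarithmic weights line up so that the $\nu$-dependent factors telescope into the single prefactor $\rho_{\a,p}(n)$, and in the critical case $\a=1/p-1\notin\N$ the logarithmic factor must be tracked carefully through both the single and the iterated sum to see that one power of $\log^{1/p}(n+1)$ — and not two — survives.
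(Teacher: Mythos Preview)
Your overall strategy is exactly the paper's: apply Theorem~\ref{propanal1} to control $\|f^{(\a)}-T_n^{(\a)}\|_p$ by the $E_\nu(f)_p$, then feed in the bound from Theorem~\ref{thsec3.1} for each $E_\nu(f)_p$, and collect. (The paper only writes out the case $\a\in\N\cup(1/p-1,\infty)$, where $\rho\equiv1$ and $\s_{\a,p}(\nu)=\nu^\a$, and leaves the remaining cases to the reader.)

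However, the explicit inequality you invoke, $\rho_{\a,p}(\nu)\le C\rho_{\a,p}(n)(\nu/n)^{1-p}$, is \emph{false} in the third case for small $\a$. Your own check says it requires $\a>1/p+p-2$, and you assert this follows from $\a<1/p-1$; it does not (take $p=1/2$: then $1/p+p-2=1/2$, so $\a=1/4$ lies in the third regime but fails your condition). What you actually need, and what \emph{does} hold in all three regimes, is the $p$-th power version
\[
(\rho_{\a,p}(\nu))^p\,\nu^{-1}\le C\,(\rho_{\a,p}(n))^p\,n^{p-1}\,\nu^{-p},
\]
equivalently $(\rho_{\a,p}(\nu)/\rho_{\a,p}(n))^p\le C(\nu/n)^{1-p}$. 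In case~3 this reads $(\nu/n)^{-\a p}\le C$, true for every $\a>0$; in the other two cases it is immediate. Once you state and use this correct inequality, the first part of the sum goes through.

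For the iterated sum there is no need to interchange the order of summation. Since the inner tail $\sum_{\mu>\nu}\mu^{-p}E_\mu(f^{(\a)})_p^p$ is monotone in $\nu$, simply bound it by its value at $\nu=n$, factor it out, and then sum $\sum_{\nu>n}(\s_{\a,p}(\nu))^p\nu^{-\a p+p-2}$ directly; this series converges in each regime (the exponent is $p-2<-1$, resp.\ one picks up at most one extra $\log$, resp.\ the exponent is $-p-1<-1$), and its value is exactly $C(\rho_{\a,p}(n))^p n^{p-1}$. This is how the paper handles it in the principal case and it spares you the case-by-case bookkeeping you were worried about.
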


Using Theorems~\ref{thsec3.1},~\ref{thsec3.A}, and~\ref{thsec3.2}, we get the following equivalences.

\begin{corollary}\label{corsec3.1E}
{\it Let $0<p<1$, $\g>{1}/{p}-1$, $\a\in \N\cup(1/p-1,\infty)$, and let $f$ be such that $f,f^{(\a)}\in L_1(\T)$. Then the following assertions are equivalent:

\medskip

$(i)$ $E_n(f)_p=\mathcal{O}(n^{-\a-\g})\,, \quad n\rightarrow
\infty\,,$

\medskip

$(ii)$ $E_n(f^{(\a)})_p=\mathcal{O}(n^{-\g})\,, \quad n\rightarrow
\infty\,,$

\medskip

$(iii)$ $\Vert f^{(\a)}-T_n^{(\a)}\Vert_p=\mathcal{O}(n^{-\g})\,, \quad
n\rightarrow \infty\,,$

\medskip

\noindent where $T_n\in\mathcal{T}_n$ is such that $\|f-T_n\|_p=E_n(f)_p$.}
\end{corollary}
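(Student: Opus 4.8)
The plan is to derive the three equivalences in a cyclic fashion, using the three theorems that have just been stated. The hypotheses $\a\in\N\cup(1/p-1,\infty)$ and $f,f^{(\a)}\in L_1(\T)$ are precisely what is needed to invoke Theorems~\ref{thsec3.1}, \ref{thsec3.A}, and~\ref{thsec3.2}, so the corollary should follow by routine bookkeeping of series of the form $\sum \nu^{s}E_\nu(\cdot)_p^p$ under the assumption that $E_\nu(\cdot)_p=\mathcal{O}(\nu^{-r})$.

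First I would prove $(ii)\Rightarrow(i)$. Assuming $E_n(f^{(\a)})_p=\mathcal{O}(n^{-\g})$, substitute into the right-hand side of~\eqref{eqthsec3.1.2}. The first term contributes $n^{-\a}E_n(f^{(\a)})_p=\mathcal{O}(n^{-\a-\g})$. For the second term, since $\g>1/p-1$, i.e.\ $\g p>1-p$, i.e.\ $\g p-1>-p$, i.e.\ $-\g p-p<-1$, the series $\sum_{\nu=n+1}^\infty \nu^{-p}E_\nu(f^{(\a)})_p^p=\mathcal{O}\big(\sum_{\nu=n+1}^\infty \nu^{-p-\g p}\big)=\mathcal{O}(n^{1-p-\g p})$ converges; multiplying by $n^{-(1-p)}$ and taking the $1/p$-th power gives $\mathcal{O}(n^{(1-p-\g p-1+p)/p})=\mathcal{O}(n^{-\g})$, so the whole bracket in~\eqref{eqthsec3.1.2} is $\mathcal{O}(n^{-\g})$ and thus $E_n(f)_p=\mathcal{O}(n^{-\a-\g})$, which is $(i)$.

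Next $(i)\Rightarrow(iii)$: assuming $E_n(f)_p=\mathcal{O}(n^{-\a-\g})$, first note $\sum_\nu \nu^{\a p-1}E_\nu(f)_p^p=\mathcal{O}(\sum_\nu \nu^{\a p-1-\a p-\g p})=\mathcal{O}(\sum_\nu \nu^{-1-\g p})<\infty$ since $\g>0$; hence when $\a\in\N\cup(1/p-1,\infty)$ Theorem~\ref{thsec3.A} applies and~\eqref{eqthsec3.A.2} holds. Its first term is $n^\a E_n(f)_p=\mathcal{O}(n^{-\g})$, and its second term is $\big(\sum_{\nu=n+1}^\infty \nu^{-1-\g p}\big)^{1/p}=\mathcal{O}((n^{-\g p})^{1/p})=\mathcal{O}(n^{-\g})$, so $(iii)$ follows. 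Finally $(iii)\Rightarrow(ii)$ is immediate (and needs no restriction beyond $f,f^{(\a)}\in L_1$): from $\|f-T_n\|_p=E_n(f)_p$ and the triangle-type inequality in $L_p$ (the $p$-norm satisfies $\|u+v\|_p^p\le\|u\|_p^p+\|v\|_p^p$), $E_n(f^{(\a)})_p\le\|f^{(\a)}-T_n^{(\a)}\|_p$ since $T_n^{(\a)}\in\mathcal{T}_n$, giving $(ii)$ at once. Alternatively one can close the cycle $(i)\Rightarrow(ii)$ directly via Theorem~\ref{propanal1} or~\ref{thsec3.2}, but the route through $(iii)$ is cleanest.

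The only mild subtlety — and the place to be careful rather than the "hard part," since all the heavy lifting is in Theorems~\ref{thsec3.1}--\ref{thsec3.2} — is checking the convergence and the exponent arithmetic of the tail series, in particular that the strict inequality $\g>1/p-1$ is exactly what makes the series $\sum \nu^{-p-\g p}$ in the $(ii)\Rightarrow(i)$ step converge and produce the matching exponent $-\g$, whereas only $\g>0$ is needed in the $(i)\Rightarrow(iii)$ direction because there the extra factor $\nu^{\a p-1}$ is already fully absorbed. I would state these computations in one or two lines each and refer back to~\eqref{eqthsec3.1.2}, \eqref{eqthsec3.A.1}--\eqref{eqthsec3.A.2}, and the fact that $T_n^{(\a)}\in\mathcal{T}_n$, noting that the hypothesis $f,f^{(\a)}\in L_1(\T)$ guarantees $f^{(\a)}$ is a well-defined object throughout so that all the quantities $E_n(f^{(\a)})_p$ and $\|f^{(\a)}-T_n^{(\a)}\|_p$ make sense.
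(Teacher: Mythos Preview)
Your proof is correct and follows essentially the same route the paper indicates: the paper merely states that the corollary follows ``using Theorems~\ref{thsec3.1},~\ref{thsec3.A}, and~\ref{thsec3.2}'', and your cycle $(ii)\Rightarrow(i)$ via~\eqref{eqthsec3.1.2}, $(i)\Rightarrow(iii)$ via~\eqref{eqthsec3.A.2}, and $(iii)\Rightarrow(ii)$ by the trivial bound $E_n(f^{(\a)})_p\le\Vert f^{(\a)}-T_n^{(\a)}\Vert_p$ is exactly the intended unpacking. Your exponent arithmetic on the tail sums is right, including the observation that $\g>1/p-1$ is used precisely in the $(ii)\Rightarrow(i)$ step.
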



\subsection{Inequalities for the moduli of smoothness}

In this subsection, similarly to the above considered case of the best approximation, we obtain  counterparts of~\eqref{eqnerm1-} and~\eqref{eqthModFrD.1-} for $0<p<1$.

\begin{theorem}\label{thDif}
{\it Let $0<p<1$, $\a>0$ and $\b\in \N\cup
(1/p-1,\infty)$ be such that $\a+\b\in\N\cup
(1/p-1,\infty)$, $r\in \N$, and $f,f^{(\a)}\in L_1(\T)$. Then for any $\d>0$
\begin{equation}\label{eqDif1}
  \w_{\b+\a}(f,\d)_p\le C\d^\a \(\w_\b(f^{(\a)},\d)_p+\(\d^{1-p}\int_0^\d
    \frac{\w_r(f^{(\a)},t)_p^p}{t^{2-p}}dt\)^\frac1p\),
\end{equation}
where $C$ is a constant independent of $f$ and
$\d$.
}
\end{theorem}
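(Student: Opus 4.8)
The plan is to prove \eqref{eqDif1} by the standard Fourier-analytic route that connects moduli of smoothness to best approximations, then invoke Theorem~\ref{thsec3.1} (with $f$ replaced by a polynomial). First I would recall the realization-type equivalence for $\w_\b$ in $L_p$, $0<p<1$: for $\b\in\N\cup(1/p-1,\infty)$ and any $g\in L_p(\T)$ one has
$$
\w_\b(g,1/n)_p^p\asymp E_n(g)_p^p+\frac1{n^{\b p}}\|T_n^{(\b)}\|_p^p,
$$
where $T_n$ is a polynomial of near-best approximation to $g$; this is where the hypothesis $\b\in\N\cup(1/p-1,\infty)$ (and likewise $\a+\b$) enters, since it guarantees the relevant Bernstein-type inequality $\|T_n^{(\b)}\|_p\le Cn^\b\|T_n\|_p$ holds. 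It suffices to prove \eqref{eqDif1} for $\d=1/n$, $n\in\N$, since for general $\d$ one uses monotonicity of the modulus and the fact that both sides change by bounded factors when $\d$ ranges over $[1/(n+1),1/n)$.

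Next I would fix $n$ and let $T_n\in\mathcal T_n$ satisfy $\|f^{(\a)}-T_n\|_p=E_n(f^{(\a)})_p$. Set $R_n=I_\a T_n$, the trigonometric polynomial of order $n$ whose $\a$-th Weyl derivative is $T_n$ (adjusting the mean value so that $R_n^{(\a)}=T_n$ exactly); then $g:=f-R_n$ has $g^{(\a)}=f^{(\a)}-T_n$ and $g,g^{(\a)}\in L_1(\T)$. The key point is that $\w_{\b+\a}(R_n,1/n)_p\le C n^{-\a}\|R_n^{(\a+\b)}\|_p^{\,\cdot}$-type bounds plus a triangle-type ($p$-)inequality split
$$
\w_{\b+\a}(f,1/n)_p^p\le \w_{\b+\a}(g,1/n)_p^p+\w_{\b+\a}(R_n,1/n)_p^p.
$$
For the polynomial term one uses the realization identity backwards: $\w_{\b+\a}(R_n,1/n)_p^p\asymp n^{-(\b+\a)p}\|R_n^{(\b+\a)}\|_p^p = n^{-(\b+\a)p}\|T_n^{(\b)}\|_p^p\le C n^{-\a p}\w_\b(f^{(\a)},1/n)_p^p$, which produces the first term $\d^\a\w_\b(f^{(\a)},\d)_p$. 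For the term $\w_{\b+\a}(g,1/n)_p$ one bounds it by $\w_{\b+\a}(g,\cdot)_p\le \|g\|_p\cdot$(trivial bound), and then estimates $E_m(g)_p$ and $\|g\|_p$ via Theorem~\ref{thsec3.1} applied to $g$: since $g^{(\a)}=f^{(\a)}-T_n$, we get $E_m(g)_p$ controlled by $m^{-\a}\big(E_m(g^{(\a)})_p+(m^{p-1}\sum_{\nu>m}\nu^{-p}E_\nu(g^{(\a)})_p^p)^{1/p}\big)$, and $E_\nu(g^{(\a)})_p\le E_\nu(f^{(\a)})_p$ for $\nu\ge n$ (and $=0$ effectively once we also use $E_\nu(g^{(\a)})_p\le\|g^{(\a)}\|_p=E_n(f^{(\a)})_p$). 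Converting best approximations of $f^{(\a)}$ into moduli via $E_\nu(f^{(\a)})_p\le C\w_r(f^{(\a)},1/\nu)_p$ (valid for any $r\in\N$ by the direct/Jackson inequality in $L_p$) and passing from the sum $\sum_{\nu>n}\nu^{-p-1}\w_r(f^{(\a)},1/\nu)_p^p$ to the integral $\int_0^{1/n} t^{-2+p}\w_r(f^{(\a)},t)_p^p\,dt$ by monotonicity yields exactly the second term in \eqref{eqDif1}, with the prefactor $\d^{\a}\cdot(\d^{1-p}\cdot)^{1/p}=\d^{\a}\d^{(1-p)/p}$ matching after one checks that the extra $n^{-\a}$ from Theorem~\ref{thsec3.1} combines correctly with the $n^{(p-1)/p}$ weight.

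The main obstacle I anticipate is the bookkeeping of the two regimes hidden in the hypotheses on $\b$ and $\a+\b$ together with the non-subadditivity of $\|\cdot\|_p$: one must be careful that every use of the realization identity, the Bernstein inequality, and the Jackson inequality is legitimate for the given fractional orders, and that the $p$-triangle inequality is applied consistently (working with $p$-th powers throughout). A secondary technical point is the reduction from arbitrary $\d$ to $\d=1/n$ for the \emph{left} side $\w_{\b+\a}$ while the right side contains an integral from $0$ to $\d$; this requires the elementary but slightly delicate observation that $\int_0^{1/(n+1)}\le\int_0^{1/n}$ and that $\w_r(f^{(\a)},t)_p$ is nondecreasing in $t$, so no information is lost. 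Once these are handled, the assembly is routine: sum the polynomial contribution and the $g$-contribution, take $p$-th roots, and replace $1/n$ by $\d$ using the comparability of $\w_{\b+\a}(f,\d)_p$ and $\w_{\b+\a}(f,1/n)_p$ for $1/(n+1)\le\d<1/n$.
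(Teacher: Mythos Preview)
Your decomposition differs from the paper's in a crucial way, and this creates a genuine gap. The paper splits $f=(f-T_n)+T_n$ where $T_n$ is a polynomial of best approximation of $f$ (not of $f^{(\a)}$) in $L_p$. This makes the remainder term $M_1$ immediate, since $\w_{\a+\b}(f-T_n,1/n)_p\le C\|f-T_n\|_p=CE_n(f)_p$, and Theorem~\ref{thsec3.1} then yields the integral term directly. The price is that $T_n^{(\a)}$ is no longer a near-best approximation of $f^{(\a)}$, so for the polynomial term $M_2$ the paper inserts
\[
\w_\b(T_n^{(\a)},1/n)_p^p\le C\bigl(\|f^{(\a)}-T_n^{(\a)}\|_p^p+\w_\b(f^{(\a)},1/n)_p^p\bigr)
\]
and invokes Theorem~\ref{thsec3.2} (the simultaneous-approximation estimate) to bound $\|f^{(\a)}-T_n^{(\a)}\|_p$.

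Your route---choosing $R_n$ so that $R_n^{(\a)}=T_n$ is the best approximation of $f^{(\a)}$---makes the polynomial piece clean, but the remainder piece then fails. You write ``bound $\w_{\b+\a}(g,1/n)_p\le C\|g\|_p$ and estimate $\|g\|_p$ via Theorem~\ref{thsec3.1}'', but Theorem~\ref{thsec3.1} controls $E_m(g)_p$ (which for $m=n$ equals $E_n(f)_p$, since $R_n\in\mathcal T_n$), not $\|g\|_p=\|f-R_n\|_p$. In $L_p$, $0<p<1$, there is no a~priori reason for $\|f-R_n\|_p$ to be of order $n^{-\a}$: the $\a$-th antiderivative of a good $L_p$-approximant to $f^{(\a)}$ need not approximate $f$ well at all---this is precisely the pathology recorded in~\eqref{eqKop}. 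If you try to repair the step by further splitting $g$ with the best approximation $S_n$ of $f$, you must bound $n^{-\a}\|S_n^{(\a)}-T_n\|_p$, and controlling $\|f^{(\a)}-S_n^{(\a)}\|_p$ brings you straight back to Theorem~\ref{thsec3.2}. So the missing ingredient in your sketch is exactly the simultaneous-approximation estimate that the paper's proof uses; once you accept Theorem~\ref{thsec3.2}, the cleanest organization is the paper's: take $T_n$ to be the best approximation of $f$ from the start.
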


In particular, under the conditions of Theorem~\ref{thDif}, one has
$$
\w_{\b+\a}(f,\d)_p\le C\d^{\a+\frac1p-1} \(\int_0^\d
    \frac{\w_\b(f^{(\a)},t)_p^p}{t^{2-p}}dt\)^\frac1p.
$$

A converse result is given by the following theorem.

\begin{theorem}\label{thModFrD}
{\it Let $f\in L_p(\T)$, $0<p<1$, and $\a,\b\in \N\cup
(1/p-1,\infty)$. Then
\begin{equation}
\label{eqthModFrD.1}
    \w_{\b}(f^{(\a)},\d)_p\le C\(\int_0^\d
    \frac{\w_{\b+\a}(f,t)_p^p}{t^{p\a+1}}dt\)^\frac1p,
\end{equation}
where $C$ is some constant independent of $f$ and
$\d$. Inequality (\ref{eqthModFrD.1}) means that if the right-hand is finite, then there exists
 $f^{(\a)}$ in the sense
(\ref{eqProizvLp}), $f^{(\a)}\in L_p(\T)$, and
(\ref{eqthModFrD.1}) holds.}
\end{theorem}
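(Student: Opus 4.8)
\textbf{Proof proposal for Theorem~\ref{thModFrD}.}

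The plan is to reduce the modulus estimate to a best-approximation estimate, apply Theorem~\ref{thsec3.A}, and then convert best approximations back into moduli by means of the (already known, by the quoted result of~\cite{DiTi07}) equivalence between moduli of smoothness and realizations / the Jackson--Bernstein machinery valid for $\a,\b\in\N\cup(1/p-1,\infty)$ in $L_p$, $0<p<1$. First I would fix $\d>0$ and set $n=\lfloor 1/\d\rfloor$. By the direct theorem for the fractional modulus $\w_{\b+\a}$ (Jackson-type inequality, valid since $\b+\a\in\N\cup(1/p-1,\infty)$), one has $E_\nu(f)_p\le C\,\w_{\b+\a}(f,1/\nu)_p$ for every $\nu$, and hence
\begin{equation*}
\sum_{\nu=1}^\infty \nu^{\a p-1}E_\nu(f)_p^p\le C\sum_{\nu=1}^\infty \nu^{\a p-1}\w_{\b+\a}(f,1/\nu)_p^p\le C\int_0^1\frac{\w_{\b+\a}(f,t)_p^p}{t^{\a p+1}}\,dt,
\end{equation*}
so the hypothesis "right-hand side of~\eqref{eqthModFrD.1} is finite" guarantees~\eqref{eqthsec3.A.1}. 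Theorem~\ref{thsec3.A} then applies and yields the existence of $f^{(\a)}$ in the sense of~\eqref{eqProizvLp}, $f^{(\a)}\in L_p(\T)$, together with the bound~\eqref{eqthsec3.A.2} for the tails $\|f^{(\a)}-T_n^{(\a)}\|_p$.

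Next I would estimate $\w_\b(f^{(\a)},\d)_p$ from above by $E_n(f^{(\a)})_p$ up to the customary realization error. Using the quasi-triangle inequality for $\|\cdot\|_p^p$ and the Jackson-type bound for $\w_\b$ applied to $f^{(\a)}$ (legitimate because $\b\in\N\cup(1/p-1,\infty)$), one gets
\begin{equation*}
\w_\b(f^{(\a)},\d)_p^p\le C\,E_n(f^{(\a)})_p^p\le C\Big(\|f^{(\a)}-T_n^{(\a)}\|_p^p+E_n(T_n^{(\a)})_p^p\Big),
\end{equation*}
and the second term vanishes since $T_n^{(\a)}\in\mathcal{T}_n$. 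Feeding in~\eqref{eqthsec3.A.2} gives
\begin{equation*}
\w_\b(f^{(\a)},\d)_p^p\le C\Big(n^{\a p}E_n(f)_p^p+\sum_{\nu=n+1}^\infty \nu^{\a p-1}E_\nu(f)_p^p\Big).
\end{equation*}
Now I substitute the Jackson bound $E_\nu(f)_p\le C\w_{\b+\a}(f,1/\nu)_p$ again. For the first term, $n^{\a p}E_n(f)_p^p\le C\,n^{\a p}\w_{\b+\a}(f,1/n)_p^p$, and since $\w_{\b+\a}(f,1/n)_p^p\le C\,n\int_{1/(n+1)}^{1/n}\w_{\b+\a}(f,t)_p^p\,dt$ (or, more robustly, using monotonicity and the inequality $\w_{\b+\a}(f,1/n)_p^p\le C\,t^{-1}\int_{t}^{2t}\w_{\b+\a}(f,\cdot)_p^p$ near $t\asymp\d$ together with $\w_{\b+\a}(f,t)_p\le C\w_{\b+\a}(f,\d)_p$ for $t\le\d$), one absorbs it into $\int_0^\d \w_{\b+\a}(f,t)_p^p t^{-\a p-1}dt$. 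For the tail sum, compare with the integral: $\sum_{\nu=n+1}^\infty \nu^{\a p-1}\w_{\b+\a}(f,1/\nu)_p^p\le C\int_0^{1/n}\w_{\b+\a}(f,t)_p^p t^{-\a p-1}dt\le C\int_0^{2\d}\w_{\b+\a}(f,t)_p^p t^{-\a p-1}dt$, and a final rescaling $\d\mapsto\d/2$ (harmless for moduli of smoothness) brings the upper limit back to $\d$. Collecting the two pieces gives exactly~\eqref{eqthModFrD.1}.

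The main obstacle I anticipate is not the integral/series comparisons, which are routine, but making sure every ingredient is genuinely available in $L_p$, $0<p<1$, for \emph{fractional} $\a,\b$: specifically, the Jackson-type inequality $E_\nu(g)_p\le C\w_\gamma(g,1/\nu)_p$ and the "realization" bound $\w_\gamma(g,1/n)_p\le C\,E_n(g)_p$ for $\gamma\in\N\cup(1/p-1,\infty)$. The restriction $\a,\b\in\N\cup(1/p-1,\infty)$ in the hypothesis is precisely what is needed for these to hold (this is the fractional analogue of the condition under which $\w_\gamma$ behaves like a genuine modulus in $L_p$, $p<1$; the borderline and sub-critical orders are excluded for the same reason they are excluded in the Bernstein inequality quoted in the introduction). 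One should also double-check the interchange in invoking Theorem~\ref{thsec3.A}: its conclusion delivers the derivative in the sense of~\eqref{eqProizvLp} \emph{and} the tail estimate simultaneously, so no separate existence argument is required — this is exactly the point flagged in the statement ("if the right-hand side is finite, then there exists $f^{(\a)}$ \dots"). Once these structural facts are in place, the proof is a two-line reduction plus bookkeeping.
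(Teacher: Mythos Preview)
There is a genuine gap. The step
\[
\w_\b(f^{(\a)},\d)_p^p\le C\,E_n(f^{(\a)})_p^p,\qquad n=\lfloor 1/\d\rfloor,
\]
which you justify by ``the Jackson-type bound for $\w_\b$'', is false: Jackson goes the other way, $E_n(g)_p\le C\w_\b(g,1/n)_p$, and no single best-approximation error can dominate the modulus. (If such an estimate held, any $g\in\mathcal{T}_n$ would have $\w_\b(g,1/n)_p=0$, hence $g$ constant.) You state the same false inequality again in the closing paragraph as ``the `realization' bound $\w_\g(g,1/n)_p\le C\,E_n(g)_p$''; the realization equivalence (Lemma~\ref{LemKfunc}) always keeps the polynomial-derivative term, which cannot be dropped.

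The repair is precisely what the paper does, and it is why Lemma~\ref{lem1} is listed among the ingredients. Instead of trying to bound $\w_\b(f^{(\a)},\d)_p$ by $E_n(f^{(\a)})_p$, split via~\eqref{eqM0} and~\eqref{eqM1}:
\[
\w_\b(f^{(\a)},1/n)_p^p\le C\,\|f^{(\a)}-T_n^{(\a)}\|_p^p+\w_\b(T_n^{(\a)},1/n)_p^p,
\]
where $T_n$ is the best approximation of $f$. The first term is controlled by Theorem~\ref{thsec3.A} exactly as you wrote. The second term---the one your argument loses---is handled by the Nikol'skii--Stechkin inequality (Lemma~\ref{lem1}):
\[
\w_\b(T_n^{(\a)},1/n)_p\le C\,n^{-\b}\|T_n^{(\a+\b)}\|_p\le C\,n^{\a}\,\w_{\a+\b}(f,1/n)_p,
\]
and $n^{\a p}\w_{\a+\b}(f,1/n)_p^p$ is then absorbed into $\int_0^\d t^{-\a p-1}\w_{\a+\b}(f,t)_p^p\,dt$ using~\eqref{eqM2} on $[1/(2n),1/n]$. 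Your remaining bookkeeping (existence of $f^{(\a)}$ via Theorem~\ref{thsec3.A}, Jackson for the tail, series-to-integral comparison) is correct.
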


Under additional restrictions on $f$, we obtain an analogue of inequality~\eqref{eqthModFrD.1} for any $\a>0$.

\begin{theorem}\label{thModFrD10}
{\it Let $0<p<1$, $\a>0$ and $\b\in \N\cup
(1/p-1,\infty)$ be such that $\a+\b\in\N\cup
(1/p-1,\infty)$, and $f,f^{(\a)}\in L_1(\T)$. Then for any $\d>0$
\begin{equation*}
    \w_{\b}(f^{(\a)},\d)_p\le C\(\int_0^\d
    \frac{\w_{\b+\a}(f,t)_p^p}{t}\s_{\a,p}\(\frac1t\)dt\)^\frac1p,
\end{equation*}
where $\s_{\a,p}(\cdot)$ is defined in Theorem~\ref{propanal1} and $C$ is some constant independent of $f$ and
$\d$.}
\end{theorem}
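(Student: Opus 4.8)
\textbf{Proof plan for Theorem~\ref{thModFrD10}.}

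The plan is to run the same Jackson--Bernstein machinery used for Theorem~\ref{thModFrD}, but with the Bernstein-type inequality for fractional derivatives replaced by the sharp one that produces the factor $\s_{\a,p}$ instead of $n^\a$. First I would reduce the modulus of smoothness on the left to best approximations: by the standard realization/equivalence results for $\w_\b$ in $L_p$, $0<p<1$ (valid since $\b\in\N\cup(1/p-1,\infty)$), one has $\w_\b(f^{(\a)},\d)_p^p\asymp$ a weighted sum of $E_\nu(f^{(\a)})_p^p$ over $\nu\le 1/\d$ together with $\|f^{(\a)}-T_m^{(\a)}\|_p^p$ for $m\sim 1/\d$. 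At this point the crucial input is Theorem~\ref{propanal1}, which under the hypothesis $f,f^{(\a)}\in L_1(\T)$ (exactly the hypothesis we have here) bounds $\|f^{(\a)}-T_n^{(\a)}\|_p^p$ by $\s_{\a,p}(n)^pE_n(f)_p^p+\sum_{\nu>n}\s_{\a,p}(\nu)^p\nu^{-1}E_\nu(f)_p^p$.

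Next I would convert the best approximations $E_\nu(f)_p$ back into the modulus $\w_{\b+\a}(f,\cdot)_p$ via the Jackson inequality in $L_p$, $0<p<1$: $E_\nu(f)_p\le C\,\w_{\b+\a}(f,1/\nu)_p$ (again available because $\a+\b\in\N\cup(1/p-1,\infty)$). Substituting and collecting terms, the right-hand side becomes $C\sum_{\nu\ge m}\s_{\a,p}(\nu)^p\nu^{-1}\w_{\b+\a}(f,1/\nu)_p^p$ (the leading term $\s_{\a,p}(m)^pE_m(f)_p^p$ being absorbed into the $\nu=m$ summand, using monotonicity of $\s_{\a,p}$ and of the modulus). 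The last step is a routine discretization: comparing the sum $\sum_{\nu\ge m}\s_{\a,p}(\nu)^p\nu^{-1}\w_{\b+\a}(f,1/\nu)_p^p$ with the integral $\int_0^{1/m}t^{-1}\s_{\a,p}(1/t)\w_{\b+\a}(f,t)_p^p\,dt$ (using that $t\mapsto\s_{\a,p}(1/t)$ is, up to constants, monotone on dyadic blocks and that $\w_{\b+\a}(f,t)_p$ is nondecreasing in $t$), and finally letting $m\sim 1/\d$ gives exactly the claimed bound.

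The main obstacle is bookkeeping rather than a new idea: one must check that the realization of $\w_\b(f^{(\a)},\d)_p$ in terms of $E_\nu(f^{(\a)})_p$ and $\|f^{(\a)}-T_m^{(\a)}\|_p$ is applicable here, i.e.\ that $f^{(\a)}$ really is an $L_p$ function to which the $L_p$ Jackson--Bernstein theory applies. This is where the hypothesis $f^{(\a)}\in L_1(\T)$ is used together with Theorem~\ref{propanal1}: once $\|f^{(\a)}-T_n^{(\a)}\|_p$ is finite and decays, $f^{(\a)}\in L_p(\T)$ and the $T_n^{(\a)}$ are near-best approximants of it, so all the standard $L_p$-machinery becomes legitimate. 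A secondary technical point is the passage from the sum to the integral in the borderline case $\a=1/p-1\notin\N$, where $\s_{\a,p}(\nu)=\nu^{1/p-1}\log^{1/p}(\nu+1)$; here one only needs that $\log^{1/p}(\nu+1)$ varies slowly, so the dyadic block comparison with $\int t^{-1}\s_{\a,p}(1/t)\w_{\b+\a}(f,t)_p^p\,dt$ still loses only an absolute constant. No step requires estimates beyond those already established in Theorems~\ref{thsec3.A}, \ref{propanal1}, and~\ref{thModFrD}.
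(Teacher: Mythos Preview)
Your overall plan is the paper's plan: replace Theorem~\ref{thsec3.A} by Theorem~\ref{propanal1} in the argument for Theorem~\ref{thModFrD}, then convert $E_\nu(f)_p$ to $\w_{\a+\b}(f,1/\nu)_p$ via the Jackson inequality~\eqref{eqJ}, and discretize. That is exactly what the paper does (it explicitly says the proof is the same as for Theorems~\ref{thDif} and~\ref{thModFrD} with Theorem~\ref{propanal1} in place of Theorem~\ref{thsec3.A}).

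One point of your write-up is imprecise and, if taken literally, would create a gap. You describe the first reduction as ``$\w_\b(f^{(\a)},\d)_p^p\asymp$ a weighted sum of $E_\nu(f^{(\a)})_p^p$ over $\nu\le 1/\d$ together with $\|f^{(\a)}-T_m^{(\a)}\|_p^p$''. If by this you mean applying the inverse inequality~\eqref{eqB} to $f^{(\a)}$ and then bounding each $E_\nu(f^{(\a)})_p$ via Theorem~\ref{propanal1}, you end up (after Jackson) with a low-frequency piece
\[
n^{-\b p}\sum_{\nu=0}^{n}\s_{\a,p}(\nu+1)^p(\nu+1)^{\b p-1}\w_{\a+\b}\Big(f,\tfrac1{\nu+1}\Big)_p^p,
\]
which involves the modulus at arguments $\ge 1/n$ and is \emph{not} dominated by $\int_0^{1/n} t^{-1}(\s_{\a,p}(1/t))^p\w_{\a+\b}(f,t)_p^p\,dt$ in general (try $\w_{\a+\b}(f,t)_p\asymp t^{\a+\b}$: the sum picks up an extra $\log n$). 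The paper avoids this by not going through~\eqref{eqB} at all: following the template of the proof of Theorem~\ref{thDif}, one takes $T_n$ a best approximant of $f$, splits
\[
\w_\b(f^{(\a)},1/n)_p^p\le C\,\|f^{(\a)}-T_n^{(\a)}\|_p^p+\w_\b(T_n^{(\a)},1/n)_p^p,
\]
handles the first term by Theorem~\ref{propanal1}, and handles the polynomial term directly via Lemma~\ref{lem1}, which gives $\w_\b(T_n^{(\a)},1/n)_p\le C\,n^{\a}\w_{\a+\b}(f,1/n)_p$ --- a single term at scale $1/n$ that is absorbed into the integral near $t\sim 1/n$. Once you phrase the first step this way, the rest of your outline (Jackson, then the dyadic comparison with the integral, including the slowly-varying log factor in the borderline case) is correct and matches the paper.
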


The next corollary easily follows from  Theorems~\ref{thDif} and~\ref{thModFrD}.

\begin{corollary}\label{corsec3.1E}
{\it Let $0<p<1$, $\g>{1}/{p}-1$, $\a,\b\in \N\cup(1/p-1,\infty)$, and let $f$ be such that $f,f^{(\a)}\in L_1(\T)$. Then the following assertions are equivalent:

\medskip

$(i)$ $\w_{\a+\b}(f,\d)_p=\mathcal{O}(\d^{\a+\g})\,, \quad \d\rightarrow 0\,,$

\medskip

$(ii)$ $\w_\b(f^{(\a)},\d)_p=\mathcal{O}(\d^{\g})\,, \quad \d\rightarrow 0\,.$
}
\end{corollary}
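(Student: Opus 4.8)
The plan is to derive both implications $(i)\Rightarrow(ii)$ and $(ii)\Rightarrow(i)$ directly from Theorems~\ref{thModFrD} and~\ref{thDif} respectively, treating the corollary as a routine consequence of these integral inequalities. For $(i)\Rightarrow(ii)$, suppose $\w_{\a+\b}(f,\d)_p=\mathcal{O}(\d^{\a+\g})$. First I would check that the hypothesis $\g>1/p-1$ guarantees $p\g>1-p$, hence $p(\a+\g)-p\a-1=p\g-1>-p>-1$, so the integrand $\w_{\a+\b}(f,t)_p^p/t^{p\a+1}=\mathcal{O}(t^{p\g-1})$ is integrable near $0$ and the integral $\int_0^\d \w_{\a+\b}(f,t)_p^p t^{-p\a-1}\,dt$ converges and is itself $\mathcal{O}(\d^{p\g})$. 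By Theorem~\ref{thModFrD} (which requires $\a,\b\in\N\cup(1/p-1,\infty)$, exactly what is assumed), the finiteness of the right-hand side both produces $f^{(\a)}\in L_p(\T)$ in the sense~\eqref{eqProizvLp} and yields $\w_\b(f^{(\a)},\d)_p\le C(\int_0^\d \w_{\a+\b}(f,t)_p^p t^{-p\a-1}\,dt)^{1/p}=\mathcal{O}(\d^\g)$, which is $(ii)$.

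For the reverse implication $(ii)\Rightarrow(i)$, suppose $\w_\b(f^{(\a)},\d)_p=\mathcal{O}(\d^\g)$. Here I would invoke Theorem~\ref{thDif} with a suitably chosen $r\in\N$ (any $r$ works, but it is cleanest to pick $r$ large, or simply $r$ with $r>1/p-1$ so that the modulus behaves regularly; in fact $r$ only enters the error integral). From $\w_\b(f^{(\a)},\d)_p=\mathcal{O}(\d^\g)$ together with the standard monotonicity/doubling property $\w_r(g,t)_p\le C(r/\b)^{r}\,\w_\b(g,t)_p$ for $r\ge\b$ — or, if $r<\b$, the equivalence of moduli of different orders up to the relevant power — one gets $\w_r(f^{(\a)},t)_p=\mathcal{O}(t^{\min(r,\g)})$; choosing $r\ge\g$ makes this $\mathcal{O}(t^{\g})$. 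Then the error term in~\eqref{eqDif1} is $\d^{1-p}\int_0^\d \w_r(f^{(\a)},t)_p^p t^{p-2}\,dt=\mathcal{O}\big(\d^{1-p}\int_0^\d t^{p\g+p-2}\,dt\big)$; since $\g>1/p-1$ gives $p\g+p-2>-1$, this integral converges and equals $\mathcal{O}(\d^{1-p}\cdot\d^{p\g+p-1})=\mathcal{O}(\d^{p\g})$, so its $p$-th root is $\mathcal{O}(\d^\g)$. The main term $\w_\b(f^{(\a)},\d)_p$ is already $\mathcal{O}(\d^\g)$, so Theorem~\ref{thDif} gives $\w_{\b+\a}(f,\d)_p\le C\d^\a(\mathcal{O}(\d^\g)+\mathcal{O}(\d^\g))=\mathcal{O}(\d^{\a+\g})$, which is $(i)$.

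The only genuinely delicate point is bookkeeping with the exponents to confirm that the condition $\g>1/p-1$ is \emph{exactly} what makes every integral appearing in Theorems~\ref{thDif} and~\ref{thModFrD} converge at $0$ and produce the claimed power of $\d$; I expect this to be the main (though modest) obstacle, essentially a matter of verifying $p\g>1-p$ and tracking it through the two integral kernels $t^{-p\a-1}$ and $\d^{1-p}t^{p-2}$. A secondary technicality is the passage from $\w_\b$ to $\w_r$ for the auxiliary order $r$ in Theorem~\ref{thDif}; this is handled by the known comparison inequalities between fractional moduli of different orders in $L_p$, $0<p<1$, which may be cited from the references already in the paper (e.g.~\cite{DiTi07}, \cite{ST}). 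No new ideas beyond the two theorems are needed.
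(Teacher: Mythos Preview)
Your proposal is correct and follows exactly the route the paper indicates (the paper merely says ``easily follows from Theorems~\ref{thDif} and~\ref{thModFrD}'' without further detail). The only small caveat is your handling of the auxiliary order $r$ in Theorem~\ref{thDif}: the inequality $\w_r(g,t)_p\le C\,\w_\b(g,t)_p$ for $r\ge\b$ in $L_p$, $0<p<1$, requires $r-\b\in\N\cup(1/p-1,\infty)$ so that the binomial series in $\D_h^{r-\b}$ is $p$-summable (cf.~\eqref{restr}); since $r\in\N$ is at your disposal and $\b\in\N\cup(1/p-1,\infty)$, you can always pick $r$ large enough (or $r=\b$ if $\b\in\N$) to guarantee this, and also $r>\g$ so that the integral estimate goes through---so your argument stands as written once $r$ is chosen with this in mind.
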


\subsection{The direct and inverse approximation theorems}
Let us recall two basic inequalities in approximation theory (the so-called direct and inverse approximation theorems).
\begin{proposition}\label{propCl} {\sc (See~\cite{RS3}.)}
Let $f\in L_p(\T)$, $0< p<1$, $\b\in \N\cup (1/p-1,\infty)$, and $n\in\N$. Then
\begin{equation}\label{eqJ}
  E_n(f)_p\le C\w_\b\(f,\frac 1n\)_p,
\end{equation}
\begin{equation}\label{eqB}
 \omega_\b\left(f,\frac 1n\right)_p\le\frac{C}{n^\b}
\(\sum_{\nu=0}^n(\nu+1)^{\b p-1}E_\nu(f)_p^p\)^\frac 1p,
\end{equation}
where $C$ is a constant independent of $n$ and $f$.
\end{proposition}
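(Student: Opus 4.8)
The plan is to establish the two inequalities \eqref{eqJ} and \eqref{eqB} separately. Both will rest on the quasi-triangle inequality $\|\sum_i g_i\|_p^p\le\sum_i\|g_i\|_p^p$ (valid for $0<p<1$) together with three facts that hold in the stated range $\b\in\N\cup(1/p-1,\infty)$: (i) the Bernstein inequality $\|T_n^{(\b)}\|_p\le Cn^\b\|T_n\|_p$ for $T_n\in\mathcal{T}_n$ --- precisely the ``$n^\b$'' case of the estimate from~\cite{BL} quoted above; (ii) the reverse difference estimate $\|\Delta_\delta^\b T_n\|_p\le C(n|\delta|)^\b\|T_n\|_p$ for $T_n\in\mathcal{T}_n$ and $0<|\delta|\le 1/n$, obtained by factoring the Fourier multiplier $(1-e^{-ik\delta})^\b=(ik)^\b\delta^\b\mu(k\delta)$ with $\mu(\theta)=\((1-e^{-i\theta})/(i\theta)\)^\b$ smooth near the origin, and then combining (i) with the uniform $L_p$-boundedness of polynomial operators generated by a fixed $C^\infty$ compactly supported function (as in~\cite{RS3}); and (iii) the elementary bound $\|\Delta_\delta^\b g\|_p^p\le\bigl(\sum_{\nu\ge0}|\binom{\b}{\nu}|^p\bigr)\|g\|_p^p$, where the series converges because $|\binom{\b}{\nu}|\asymp\nu^{-\b-1}$ for $\b\notin\N$, so that $p(\b+1)>1$ is needed --- this is the one point where the hypothesis on $\b$ is truly essential.

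For the inverse inequality \eqref{eqB}, fix $n$, let $m$ be defined by $2^m\le n<2^{m+1}$, and let $T_\nu\in\mathcal{T}_\nu$ satisfy $\|f-T_\nu\|_p=E_\nu(f)_p$. For $|\delta|<1/n$ I would write $f=T_0+(T_1-T_0)+\sum_{j=1}^{m-1}(T_{2^{j+1}}-T_{2^j})+(T_n-T_{2^m})+(f-T_n)$ and apply $\Delta_\delta^\b$: the constant $T_0$ is annihilated, and by the quasi-triangle inequality $\|\Delta_\delta^\b f\|_p^p$ is bounded by the sum of the $p$-th powers of the $\Delta_\delta^\b$-images of the remaining blocks. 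Each polynomial block $V$ has $\deg V\le C\cdot 2^j$ and, by the best-approximation property, $\|V\|_p^p\le 2E_{2^j}(f)_p^p$ (with the conventions $2^j=1$ for the block $T_1-T_0$ and $2^j=n$ for the block $T_n-T_{2^m}$); estimating $V$ by (ii) and (i) gives $\|\Delta_\delta^\b V\|_p^p\le C(2^j/n)^{\b p}E_{2^j}(f)_p^p$, while the non-polynomial tail is controlled by (iii): $\|\Delta_\delta^\b(f-T_n)\|_p^p\le CE_n(f)_p^p$. Summing and using $\sum_{2^{j-1}<\nu\le 2^j}(\nu+1)^{\b p-1}\asymp 2^{j\b p}$ (since $\b p>0$), the monotonicity $E_\nu(f)_p\ge E_{2^j}(f)_p$ for $\nu\le 2^j$, and $2^m\asymp n$, every contribution is $\le Cn^{-\b p}\sum_{\nu=0}^{n}(\nu+1)^{\b p-1}E_\nu(f)_p^p$; taking the supremum over $|\delta|<1/n$ yields \eqref{eqB}. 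This half is elementary once (i)--(iii) are available, and does not use \eqref{eqJ}.

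For the direct inequality \eqref{eqJ}, which is the genuinely hard direction when $0<p<1$ (Minkowski's integral inequality, the engine of the classical Jackson theorem, is unavailable), I would first reduce to the polynomial case: since trigonometric polynomials are dense in $L_p(\T)$ and, for $T\in\mathcal{T}_N$, $E_n(f)_p\le\(E_n(T)_p^p+\|f-T\|_p^p\)^{1/p}$ while $\w_\b(f,1/n)_p^p\le\w_\b(T,1/n)_p^p+C_\b\|f-T\|_p^p$ by (iii), it suffices to prove $E_n(T)_p\le C\,\w_\b(T,1/n)_p$ for $T\in\mathcal{T}_N$ with $C$ independent of $N$, and then let $\|f-T\|_p\to0$. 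For a polynomial $T$ I would take a de la Vallée Poussin-type operator $\mathcal{L}_n$ generated by a fixed $C^\infty$ function supported in $[-2,2]$ and equal to $1$ on $[-1,1]$: $\mathcal{L}_n$ reproduces $\mathcal{T}_n$, has degree $\le 2n$, and is uniformly $L_p$-bounded for $0<p<1$ (the key kernel estimate, as in~\cite{RS3}); then $E_n(T)_p\le\|T-\mathcal{L}_nT\|_p$, and $T-\mathcal{L}_nT$ is represented through differences $\Delta_t^\b T$ with $|t|\le C/n$ via the identity $T(x)=\Delta_t^\b T(x)-\sum_{\nu\ge1}\binom{\b}{\nu}(-1)^\nu T(x-\nu t)$ averaged against the generator --- the ``high-frequency'' part of the $\nu$-sum being absorbed by the polynomial projection defining $\mathcal{L}_n$ --- after which the $L_p$-quasinorm is estimated by the quasi-triangle inequality together with the polynomial-kernel bounds. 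I expect this last step to be the main obstacle: the frequency-localized kernel analysis that substitutes for Minkowski's inequality, which I would take from~\cite{RS3} (see also~\cite{SKO75} for $\b=1$, and~\cite{BDGS77},~\cite{tabeR} for the origins of the fractional modulus).
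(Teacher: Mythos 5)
You should first note that the paper does not prove Proposition~\ref{propCl} at all: it is quoted from the literature (the fractional case from~\cite{RS3}, the integer case of~\eqref{eqJ} from~\cite{SO} and of~\eqref{eqB} from~\cite{I}), so the comparison is against those sources. Your proof of the inverse estimate~\eqref{eqB} is essentially the known argument (dyadic telescoping of best approximants, the $p$-triangle inequality, the Nikolskii--Stechkin-type bound $\Vert\D_\d^\b T_n\Vert_p\le C(n\d)^\b\Vert T_n\Vert_p$ obtained from Lemma~\ref{lem1} together with the Bernstein inequality of Lemma~\ref{lemBLf}, valid exactly for $\b\in\N\cup(1/p-1,\infty)$, and the tail bound~\eqref{restr}); modulo routine details this half is correct and coincides with how \cite{I} and \cite{RS3} argue.

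The Jackson half~\eqref{eqJ}, however, contains a genuine gap: the claim that a single de la Vall\'ee Poussin-type convolution operator $\mathcal{L}_n$ is uniformly $L_p$-bounded for $0<p<1$ is false. Indeed, take $T=D_N$, the Dirichlet kernel with $N\ge 2n$; then $\mathcal{L}_nD_N(x)=\sum_{k}v(k/n)e^{ikx}$, whose quasinorm is $\asymp n^{\frac1p-1}\Vert\widehat{v}\Vert_{L_p(\R)}$ by Lemma~\ref{lemBL}, while $\Vert D_N\Vert_p\asymp 1$ for $p<1$; more generally, for $T\in\mathcal{T}_N$ one only has the degree-dependent bound $\Vert K_n*T\Vert_p\le CN^{\frac1p-1}\Vert K_n\Vert_p\Vert T\Vert_p$ (the same convolution estimate as in~\eqref{eq777}), so your density reduction does not help, since the constant must be independent of $N$. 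This failure is structural rather than technical: no single linear convolution operator can yield a Jackson estimate in $L_p$, $0<p<1$, and the known proofs are genuinely nonlinear at this point --- \cite{SO} and \cite{I} pass through an intermediate approximation by splines/step functions, while \cite{RS3} (the source the paper cites) works with a \emph{family} of linear polynomial operators depending on a translation parameter, proves the Jackson bound for the $L_p$-average over that parameter, and only then selects a good value of the parameter to dominate $E_n(f)_p$. Your deferral to~\cite{RS3} for the ``kernel analysis'' cannot be grafted onto the single-operator scheme; the averaging over shifts (or a spline intermediate step) is precisely the missing idea, and without it the proposed proof of~\eqref{eqJ} does not go through.
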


Remark that in the case $\b\in\N$  inequality~\eqref{eqJ}, which is also called the Jackson type inequality, was proved in~\cite{SO} (see also~\cite{SKO75} and~\cite{I}) and inequality~\eqref{eqB} was proved in~\cite{I} (see also~\cite{Tab_79} concerning the case $\b\ge 1$ and $1/2\le p<1$).

Using Theorem~\ref{thsec3.1}, it is not difficult to obtain the following extensions of inequality~\eqref{eqJ} involving fractional derivatives of the function $f$.

\begin{theorem}\label{thsec3.1++++}
{\it Let $0<p<1$, $\a>0$, $\b\in \N\cup(1/p-1,\infty)$, and let $f$ be such that $f,f^{(\a)}\in L_1(\T)$. 
Then for any $n\in \N$ we have
\begin{equation}\label{eqthsec3.1.2++++J}
  E_n(f)_p \leq\frac{C}{n^{\a+\frac1p-1}}\bigg(\int_0^{1/n}
    \frac{\w_\b(f^{(\a)},t)_p^p}{t^{2-p}}dt\bigg)^\frac1p\,,
\end{equation}
where $C$ is a constant independent of $f$ and $n$.}
\end{theorem}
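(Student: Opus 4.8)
The plan is to derive Theorem~\ref{thsec3.1++++} by combining the Jackson-type estimate of Theorem~\ref{thsec3.1} with the inverse-inequality-to-Jackson estimate~\eqref{eqB} from Proposition~\ref{propCl}, applied not to $f$ but to the fractional derivative $f^{(\a)}$. First I would invoke Theorem~\ref{thsec3.1}: since $f,f^{(\a)}\in L_1(\T)$, for any $n\in\N$ we have
\begin{equation*}
  E_n(f)_p\leq\frac{C}{n^\a}\(E_n(f^{(\a)})_p+\(\frac1{n^{1-p}}\sum_{\nu=n+1}^\infty\frac{E_\nu(f^{(\a)})_p^p}{\nu^{p}}\)^{\frac1p}\).
\end{equation*}
The first term $E_n(f^{(\a)})_p$ is dominated (up to a constant, using $p$-monotonicity of the tail sum and $E_n(f^{(\a)})_p^p\le E_\nu(f^{(\a)})_p^p$ for $\nu\le n$, or simply absorbing it) by the quantity $\(n^{-(1-p)}\sum_{\nu=n+1}^\infty \nu^{-p}E_\nu(f^{(\a)})_p^p\)^{1/p}$ only after one is a little careful — in fact the cleanest route is to estimate $E_n(f^{(\a)})_p\le C\,\w_\b(f^{(\a)},1/n)_p$ by~\eqref{eqJ} applied to $f^{(\a)}$ (legitimate because $\b\in\N\cup(1/p-1,\infty)$), and then bound $\w_\b(f^{(\a)},1/n)_p^p$ from above by $n\int_0^{1/n}\w_\b(f^{(\a)},t)_p^p\,t^{-2+p}\,t^{1-p}dt$-type manipulations, or more simply by monotonicity of $\w_\b$ in $t$: $\w_\b(f^{(\a)},1/n)_p^p \le C n\int_{1/(2n)}^{1/n}\w_\b(f^{(\a)},t)_p^p\,\frac{dt}{t}$, which after inserting the weight $t^{-(2-p)}$ (bounded below by $n^{2-p}$ on that interval up to constants) gives $\w_\b(f^{(\a)},1/n)_p^p\le C n^{-(2-p)+1}\cdot n^{2-p}\cdot\text{(dummy)}$; I will present this more carefully, but the point is that the first summand is absorbed into the integral.

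Next I would handle the tail sum. By inequality~\eqref{eqB} applied to $g=f^{(\a)}$ (again admissible since $\b\in\N\cup(1/p-1,\infty)$), for each $\nu\in\N$,
\begin{equation*}
  E_\nu(f^{(\a)})_p^p \le \w_\b\(f^{(\a)},\tfrac1\nu\)_p^p \quad\text{and more usefully}\quad \w_\b\(f^{(\a)},\tfrac1\nu\)_p^p\le \frac{C}{\nu^{\b p}}\sum_{j=0}^\nu (j+1)^{\b p-1}E_j(f^{(\a)})_p^p,
\end{equation*}
but in this direction I actually want to go the other way: I bound $\sum_{\nu=n+1}^\infty \nu^{-p}E_\nu(f^{(\a)})_p^p$ by $\sum_{\nu=n+1}^\infty \nu^{-p}\w_\b(f^{(\a)},1/\nu)_p^p$ via~\eqref{eqJ}. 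Now comparing the sum to an integral: since $\w_\b(f^{(\a)},\cdot)_p$ is nondecreasing, $\nu^{-p}\w_\b(f^{(\a)},1/\nu)_p^p \asymp \int_{1/(\nu+1)}^{1/\nu}\w_\b(f^{(\a)},t)_p^p \,t^{p-2}\,dt$ up to absolute constants (because $t\asymp 1/\nu$ and $dt\asymp \nu^{-2}$ on that interval, and $t^{p-2}\asymp \nu^{2-p}$, so $t^{p-2}dt\asymp \nu^{-p}$, while $\w_\b(f^{(\a)},t)_p^p$ is squeezed between $\w_\b(f^{(\a)},1/(\nu+1))_p^p$ and $\w_\b(f^{(\a)},1/\nu)_p^p$, which differ by a bounded factor for the fractional modulus). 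Summing over $\nu\ge n+1$ telescopes to $\int_0^{1/n}\w_\b(f^{(\a)},t)_p^p\,t^{p-2}\,dt$. Therefore
\begin{equation*}
  \frac1{n^{1-p}}\sum_{\nu=n+1}^\infty\frac{E_\nu(f^{(\a)})_p^p}{\nu^{p}} \le \frac{C}{n^{1-p}}\int_0^{1/n}\frac{\w_\b(f^{(\a)},t)_p^p}{t^{2-p}}\,dt.
\end{equation*}

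Finally I would assemble: plugging this and the bound on the first summand into Theorem~\ref{thsec3.1} gives $E_n(f)_p\le C n^{-\a}\(n^{-(1-p)}\int_0^{1/n}\w_\b(f^{(\a)},t)_p^p\,t^{p-2}\,dt\)^{1/p} = C\,n^{-\a-\frac1p+1}\(\int_0^{1/n}\w_\b(f^{(\a)},t)_p^p\,t^{p-2}\,dt\)^{1/p}$, which is exactly~\eqref{eqthsec3.1.2++++J}. The step I expect to require the most care is not conceptually hard but technically the delicate one: the sum-to-integral comparison for $\nu^{-p}\w_\b(f^{(\a)},1/\nu)_p^p$, where one must use the standard property of fractional moduli that $\w_\b(f^{(\a)},\lambda t)_p\le C(1+\lambda)^{\b}\w_\b(f^{(\a)},t)_p$ (or at least that $\w_\b(f^{(\a)},1/\nu)_p$ and $\w_\b(f^{(\a)},1/(\nu+1))_p$ are comparable) so that replacing the discrete values by the integral loses only an absolute constant, together with checking that the weight $t^{p-2}$ (which blows up at $0$) is handled correctly since the integrand $\w_\b(f^{(\a)},t)_p^p t^{p-2}$ may or may not be integrable near $0$ — but if it is not, both sides of~\eqref{eqthsec3.1.2++++J} are infinite and there is nothing to prove, while if it is, the telescoping is valid. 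One should also double-check that the Jackson inequality~\eqref{eqJ} is applicable to $f^{(\a)}$, i.e.\ that $f^{(\a)}\in L_p(\T)$; this follows from $f^{(\a)}\in L_1(\T)\subset L_p(\T)$ on the torus, so the hypotheses are met.
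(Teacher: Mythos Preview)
Your proposal is correct and follows essentially the same approach as the paper: apply Theorem~\ref{thsec3.1} to bound $E_n(f)_p$ in terms of $E_\nu(f^{(\a)})_p$, then use the Jackson inequality~\eqref{eqJ} for $f^{(\a)}$ to replace each $E_\nu(f^{(\a)})_p$ by $\w_\b(f^{(\a)},1/\nu)_p$, and finally convert the resulting sum to the integral. Your brief detour through~\eqref{eqB} is unnecessary (as you yourself note), and the technical checks you flag --- the sum-to-integral comparison via~\eqref{eqM2} and the fact that $f^{(\a)}\in L_1(\T)\subset L_p(\T)$ --- are exactly the routine verifications needed.
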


Note that in the case $1\le p<\infty$, inequality~\eqref{eqthsec3.1.2++++J}  holds in the following form:
\begin{equation}\label{JJJ}
  E_n(f)_p\leq\frac{C}{n^\a}\w_\b\(f^{(\a)},\frac1n\)_p.
\end{equation}
Sometimes~\eqref{JJJ}  is called the second Jackson inequality
(see, e.g.,~\cite[p.~260]{timan}). Let us again emphasize that this inequality is not valid if $0<p<1$ (see~\eqref{eqKop}).

In the next theorem, using Theorem~\ref{thsec3.A} and~\eqref{eqB}, we obtain a converse inequality to~\eqref{eqthsec3.1.2++++J}.

\begin{theorem}\label{thsec3.A++++}
Let $f\in L_p(\mathbb{T})$, $0<p<1$, $\b\in \N\cup(1/p-1,\infty)$, and let for some $\a\in \N\cup(1/p-1,\infty)$
\begin{equation}\label{eqthsec3.A.1++++}
\sum\limits_{\nu=1}^\infty \nu^{\a p-1}E_\nu(f)_p^p<\infty\,.
\end{equation}
Then $f$ has the derivative $f^{(\a)}$ in the sense of $L_p$ and for any $n\in\N$
\begin{equation}\label{eqthsec3.1.2++++}
  \w_\b\(f^{(\a)},\frac1n\)_p\le C\(\frac1{n^{\b p}}\sum_{\nu=0}^n (\nu+1)^{(\a+\b)p-1}E_\nu(f)_p^p+\sum_{\nu=n+1}^\infty \nu^{\a p-1}E_\nu(f)_p^p\)^\frac1p\,,
\end{equation}
where $C$ is a constant independent of $f$ and $n$.
\end{theorem}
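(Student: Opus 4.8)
The plan is to derive~\eqref{eqthsec3.1.2++++} by inserting the simultaneous‑approximation estimate of Theorem~\ref{thsec3.A} into the inverse inequality~\eqref{eqB} of Proposition~\ref{propCl} applied to the function $f^{(\a)}$. Since $\a\in\N\cup(1/p-1,\infty)$ and~\eqref{eqthsec3.A.1++++} holds, Theorem~\ref{thsec3.A} guarantees that $f^{(\a)}$ exists in the sense of~\eqref{eqProizvLp}; in particular $f^{(\a)}\in L_p(\T)$, so that $E_m(f^{(\a)})_p$ and $\w_\b(f^{(\a)},\d)_p$ are meaningful, and, with $T_m\in\mathcal{T}_m$ denoting a polynomial of best $L_p$‑approximation of $f$, for every $m\in\N$
\[
\|f^{(\a)}-T_m^{(\a)}\|_p\le C\Bigl(m^{\a}E_m(f)_p+\Bigl(\sum_{k=m+1}^{\infty}k^{\a p-1}E_k(f)_p^{p}\Bigr)^{1/p}\Bigr).
\]
As $T_m^{(\a)}\in\mathcal{T}_m$, this yields $E_m(f^{(\a)})_p\le\|f^{(\a)}-T_m^{(\a)}\|_p$, and raising to the power $p$ and using $(a+b)^p\le a^p+b^p$ (valid since $0<p<1$) we obtain the pointwise bound
\[
E_m(f^{(\a)})_p^{p}\le C\Bigl(m^{\a p}E_m(f)_p^{p}+\sum_{k=m+1}^{\infty}k^{\a p-1}E_k(f)_p^{p}\Bigr),\qquad m\in\N .
\]

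Next I would apply~\eqref{eqB} to $f^{(\a)}$ (legitimate since $\b\in\N\cup(1/p-1,\infty)$):
\[
\w_\b\Bigl(f^{(\a)},\frac1n\Bigr)_p^{p}\le\frac{C}{n^{\b p}}\sum_{m=0}^{n}(m+1)^{\b p-1}E_m(f^{(\a)})_p^{p},
\]
and substitute the previous estimate. The term coming from $m^{\a p}$ is handled at once, since $(m+1)^{\b p-1}m^{\a p}\le(m+1)^{(\a+\b)p-1}$, and it reproduces the first summand on the right of~\eqref{eqthsec3.1.2++++}. For the remaining double sum I would interchange the order of summation,
\[
\sum_{m=0}^{n}(m+1)^{\b p-1}\sum_{k=m+1}^{\infty}k^{\a p-1}E_k(f)_p^{p}=\sum_{k=1}^{\infty}k^{\a p-1}E_k(f)_p^{p}\sum_{m=0}^{\min(n,k-1)}(m+1)^{\b p-1},
\]
use the elementary inequality $\sum_{m=0}^{j}(m+1)^{\b p-1}\le C(j+1)^{\b p}$ (which holds in both cases $0<\b p<1$ and $\b p\ge1$), so that the inner sum is at most $C\min(n+1,k)^{\b p}$, and then split the sum over $k$ at $k=n$. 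For $k\le n$ one has $\min(n+1,k)^{\b p}=k^{\b p}$, which produces $k^{(\a+\b)p-1}E_k(f)_p^{p}$ and, after division by $n^{\b p}$, is absorbed into the first summand of~\eqref{eqthsec3.1.2++++}; for $k>n$ the factor equals $(n+1)^{\b p}$, and since $(n+1)^{\b p}/n^{\b p}\le 2^{\b p}$ this contributes precisely the tail $\sum_{k>n}k^{\a p-1}E_k(f)_p^{p}$. Collecting the two pieces and taking $p$‑th roots gives~\eqref{eqthsec3.1.2++++}.

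Apart from the use of the two input inequalities, the argument is essentially bookkeeping; the step calling for some care is the rearrangement of the double sum together with the verification that, after splitting at $k=n$, the two resulting ranges reproduce exactly the two terms on the right‑hand side of~\eqref{eqthsec3.1.2++++} rather than a coarser weight. One also has to account for the boundary term $m=0$ of the outer sum, which is a minor and elementary point (for instance, $E_0(f^{(\a)})_p\le\|f^{(\a)}\|_p$, estimated through the case $m=1$ of Theorem~\ref{thsec3.A} and the Bernstein inequality for trigonometric polynomials of degree one) and does not affect the stated bound.
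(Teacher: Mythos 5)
Your proposal is correct and follows essentially the same route as the paper: existence of $f^{(\a)}$ and the bound $E_\nu(f^{(\a)})_p^p\le C\bigl((\nu+1)^{\a p}E_\nu(f)_p^p+\sum_{\mu>\nu}\mu^{\a p-1}E_\mu(f)_p^p\bigr)$ come from Theorem~\ref{thsec3.A}, this is inserted into the inverse inequality~\eqref{eqB} for $f^{(\a)}$, and the double sum is handled by interchanging the order of summation and splitting at $n$, exactly as in the paper's argument (your treatment of the $\nu=0$ term is the only point the paper leaves implicit).
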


Remark that in the case $1\le p<\infty$, inequalities of type~\eqref{eqthsec3.1.2++++} can be found in~\cite{Tim} and \cite[p.~154]{Tab_book} (see also the general case in~\cite{ST}).

Similarly to Theorem~\ref{propanal1}, under additional restrictions on the function $f$, we obtain the following extension of Theorem~\ref{thsec3.A++++} to the case $\a\le 1/p-1$.

\begin{theorem}\label{-thsec3.A++++}
  Let $0<p<1$, $\a>0$,  $\b\in \N\cup(1/p-1,\infty)$, and let $f$ be such that $f,f^{(\a)}\in L_1(\T)$.
Then
\begin{equation*}
\begin{split}
    \w_\b\(f^{(\a)},\frac1n\)_p\le C\Bigg(\frac1{n^{\b p}}\sum_{\nu=0}^n (\s_{\a,p}(\nu+1))^p&(\nu+1)^{\b p-1}E_\nu(f)_p^p\\
&+\sum_{\nu=n+1}^\infty (\s_{\a,p}(\nu))^p\nu^{-1}E_\nu(f)_p^p\Bigg)^\frac1p\,,
\end{split}
\end{equation*}
where $\s_{\a,p}(\cdot)$ is defined in Theorem~\ref{propanal1}
and $C$ is a constant independent of $f$ and $n$.
\end{theorem}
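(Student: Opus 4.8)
The plan is to mimic the scheme used for Theorem~\ref{thsec3.A++++}, but to use estimate~\eqref{eqthsec3.A.2+A} of Theorem~\ref{propanal1} (valid for all $\a>0$, at the cost of the weight $\s_{\a,p}$) in place of the inverse estimate of Theorem~\ref{thsec3.A} (which applies only when $\a\in\N\cup(1/p-1,\infty)$). Let $T_\nu\in\mathcal{T}_\nu$ be polynomials of best approximation of $f$ in $L_p(\T)$. Since $f^{(\a)}\in L_1(\T)\subset L_p(\T)$ and $\b\in\N\cup(1/p-1,\infty)$, the modulus $\w_\b(f^{(\a)},1/n)_p$ is well defined, and the direct inequality~\eqref{eqB} of Proposition~\ref{propCl}, applied to $f^{(\a)}$ instead of $f$, gives
$$
\w_\b\(f^{(\a)},\frac1n\)_p^p\le\frac{C}{n^{\b p}}\sum_{\nu=0}^n(\nu+1)^{\b p-1}E_\nu(f^{(\a)})_p^p.
$$
Everything then reduces to inserting a suitable bound for $E_\nu(f^{(\a)})_p$ and rearranging the resulting double sum.

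First I would estimate $E_\nu(f^{(\a)})_p$. For $\nu\ge1$, since $T_\nu^{(\a)}\in\mathcal{T}_\nu$, the $p$-subadditivity of $\Vert\cdot\Vert_p^p$ together with~\eqref{eqthsec3.A.2+A} yields
$$
E_\nu(f^{(\a)})_p^p\le\Vert f^{(\a)}-T_\nu^{(\a)}\Vert_p^p\le C(\s_{\a,p}(\nu))^pE_\nu(f)_p^p+C\sum_{\mu=\nu+1}^\infty(\s_{\a,p}(\mu))^p\mu^{-1}E_\mu(f)_p^p.
$$
The boundary term $\nu=0$ I would handle separately: from $E_0(f^{(\a)})_p^p\le\Vert f^{(\a)}\Vert_p^p\le\Vert f^{(\a)}-T_1^{(\a)}\Vert_p^p+\Vert T_1^{(\a)}\Vert_p^p$, bounding the first term by~\eqref{eqthsec3.A.2+A} with $n=1$ and the second by the Bernstein inequality on the finite-dimensional space $\mathcal{T}_1$, namely $\Vert T_1^{(\a)}\Vert_p\le C\Vert T_1-T_0\Vert_p\le CE_0(f)_p$. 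Since $\s_{\a,p}(1)\asymp1$ and $\s_{\a,p}(\nu)\asymp\s_{\a,p}(\nu+1)$ for $\nu\ge1$, all these bounds may be written uniformly with $\s_{\a,p}(\nu+1)$ in place of $\s_{\a,p}(\nu)$.

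Substituting these estimates into the above consequence of~\eqref{eqB} splits $\w_\b(f^{(\a)},1/n)_p^p$ into a diagonal part,
$$
\frac{C}{n^{\b p}}\sum_{\nu=0}^n(\nu+1)^{\b p-1}(\s_{\a,p}(\nu+1))^pE_\nu(f)_p^p,
$$
which is exactly the first term of the assertion, and an off-diagonal part which, after interchanging the order of summation, equals
$$
\frac{C}{n^{\b p}}\sum_{\mu=1}^\infty(\s_{\a,p}(\mu))^p\mu^{-1}E_\mu(f)_p^p\sum_{\nu=0}^{\min(n,\mu-1)}(\nu+1)^{\b p-1}.
$$
Using the elementary two-sided bound $\sum_{\nu=0}^m(\nu+1)^{\b p-1}\asymp(m+1)^{\b p}$ (for any fixed $\b p>0$), the inner sum is $\asymp\min(n+1,\mu)^{\b p}$. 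Splitting at $\mu=n$, the part $\mu\le n$ contributes $\frac{C}{n^{\b p}}\sum_{\mu\le n}(\s_{\a,p}(\mu))^p\mu^{\b p-1}E_\mu(f)_p^p$, which is absorbed into the first term of the assertion (via $\mu^{\b p-1}\asymp(\mu+1)^{\b p-1}$), while for $\mu>n$ one has $\min(n+1,\mu)^{\b p}\asymp n^{\b p}$, which cancels the prefactor $n^{-\b p}$ and leaves $C\sum_{\mu>n}(\s_{\a,p}(\mu))^p\mu^{-1}E_\mu(f)_p^p$, the second term of the assertion. Taking $p$-th roots completes the proof.

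The whole argument is essentially bookkeeping built on Theorem~\ref{propanal1} and Proposition~\ref{propCl}; the points that require some care are the uniform two-sided estimate of $\sum_{\nu=0}^m(\nu+1)^{\b p-1}$ in the range $\b p<1$ (so that it is governed by $(m+1)^{\b p}$ rather than by its last term), carrying everything at the level of $p$-th powers since $\Vert\cdot\Vert_p$ is only a quasi-norm, and the separate treatment of the term $\nu=0$, for which Theorem~\ref{propanal1} must be combined with the Bernstein inequality on $\mathcal{T}_1$. I expect no obstacle deeper than that, the essential difficulties of smoothness in $L_p$, $0<p<1$, having already been resolved in Theorem~\ref{propanal1}.
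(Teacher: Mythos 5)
Your proposal is correct and is essentially the paper's own argument: the paper proves this theorem by repeating the proof of Theorem~\ref{thsec3.A++++} (i.e.\ applying~\eqref{eqB} to $f^{(\a)}$, inserting the bound for $\|f^{(\a)}-T_\nu^{(\a)}\|_p$, and interchanging the order of summation as in~\eqref{mo1}--\eqref{mo3}), with inequality~\eqref{eqthsec3.A.2+A} of Theorem~\ref{propanal1} replacing~\eqref{eqthsec3.A.2}, exactly as you do. Your separate treatment of the $\nu=0$ term and the remark on $\s_{\a,p}(\nu)\asymp\s_{\a,p}(\nu+1)$ are harmless refinements of bookkeeping the paper leaves implicit.
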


Recall that the assertions of Proposition~\ref{propCl} imply that for $\b\in \N\cup (1/p-1,\infty)$ and $0<\g<\b$ the condition $\w_\b(f,\d)_p=\mathcal{O}(\d^\g)$ is equivalent to $E_n(f)_p=\mathcal{O}(n^{-\g})$. Combining Theorems~\ref{thsec3.1++++} and~\ref{thsec3.A++++}, we obtain an analogue of this equivalence involving the fractional derivative of a function $f$.

\begin{corollary}\label{corsec3.1EMOD}
{\it Let $0<p<1$, $\a,\b\in \N\cup(1/p-1,\infty)$, $1/p-1<\g<\b$, and let $f$ be such that $f,f^{(\a)}\in L_1(\T)$. Then the following assertions are equivalent:

\medskip

$(i)$ $E_n(f)_p=\mathcal{O}(n^{-\a-\g})\,, \quad n\rightarrow
\infty\,,$

\medskip

$(ii)$ $\w_\b(f^{(\a)},\d)_p=\mathcal{O}(\d^{\g})\,, \quad \d\rightarrow 0\,.$
}
\end{corollary}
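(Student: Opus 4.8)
The plan is to obtain the two implications directly by feeding the hypothesis of each one into Theorems~\ref{thsec3.1++++} and~\ref{thsec3.A++++} respectively, after which everything reduces to an elementary estimate of a power-type integral (resp.\ of partial sums of a power series). The assumptions $f,f^{(\a)}\in L_1(\T)$ (which, on $\T$ with normalized measure, also give $f\in L_p(\T)$) and $\a\in\N\cup(1/p-1,\infty)$ are exactly what is needed to apply those two theorems.

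\emph{Proof that $(ii)\Rightarrow(i)$.} Suppose $\w_\b(f^{(\a)},\d)_p\le C\d^\g$. Inserting this bound into the right-hand side of~\eqref{eqthsec3.1.2++++J}, the integrand is dominated by a constant multiple of $t^{(\g+1)p-2}$. Since $\g>1/p-1$ we have $(\g+1)p-1>0$, so the integral converges at $0$ and $\int_0^{1/n}t^{(\g+1)p-2}\,dt\le C\,n^{-((\g+1)p-1)}$. Taking the $p$-th root and multiplying by $n^{-(\a+1/p-1)}$ yields, after simplifying the exponents, $E_n(f)_p\le C\,n^{-\a-\g}$, which is $(i)$.

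\emph{Proof that $(i)\Rightarrow(ii)$.} Suppose $E_n(f)_p\le C\,n^{-\a-\g}$ for all $n\ge 1$. Then $\nu^{\a p-1}E_\nu(f)_p^p\le C\,\nu^{-\g p-1}$, and since $\g p>0$ the series in~\eqref{eqthsec3.A.1++++} converges; hence Theorem~\ref{thsec3.A++++} is applicable. In the right-hand side of~\eqref{eqthsec3.1.2++++}, the first sum is bounded, up to a constant, by $n^{-\b p}\sum_{\nu=1}^{n}\nu^{(\b-\g)p-1}$ (plus the harmless $\nu=0$ term $n^{-\b p}E_0(f)_p^p$), and since $\b>\g$ this partial sum is $\asymp n^{(\b-\g)p}$, so the whole first term is $\le C\,n^{-\g p}$; the second sum is $\le C\sum_{\nu>n}\nu^{-\g p-1}\le C\,n^{-\g p}$. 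Therefore $\w_\b(f^{(\a)},1/n)_p\le C\,n^{-\g}$. For arbitrary small $\d>0$, choosing $n=\lfloor 1/\d\rfloor$ and using monotonicity of $\w_\b(f^{(\a)},\cdot)_p$ gives $\w_\b(f^{(\a)},\d)_p\le\w_\b(f^{(\a)},1/n)_p\le C\,n^{-\g}\le C\,\d^\g$; for $\d$ bounded away from $0$ the estimate is trivial since $\w_\b(f^{(\a)},\cdot)_p$ is bounded by $C\|f^{(\a)}\|_p$. This is $(ii)$.

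The argument is essentially bookkeeping; the only points that require attention are the two exponent conditions imposed in the statement. The condition $\g>1/p-1$ is precisely what makes the integral in Theorem~\ref{thsec3.1++++} convergent at the origin, and the condition $\g<\b$ is precisely what makes the partial sum $\sum_{\nu\le n}\nu^{(\b-\g)p-1}$ grow like $n^{(\b-\g)p}$ (rather than being bounded or only logarithmically large), so that the factor $n^{-\b p}$ in Theorem~\ref{thsec3.A++++} produces exactly the order $n^{-\g p}$. Everything else is routine.
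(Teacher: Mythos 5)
Your proposal is correct and follows exactly the route the paper intends: the corollary is stated there as a direct combination of Theorems~\ref{thsec3.1++++} and~\ref{thsec3.A++++}, and your exponent bookkeeping (the convergence of the integral from $\g>1/p-1$ and the growth $\asymp n^{(\b-\g)p}$ of the partial sums from $\g<\b$) fills in precisely the computation the paper leaves implicit.
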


\subsection{On decreasing of the fractional modulus of smoothness}

The following inequality plays a crucial role in the proofs of the main results of this paper:
\begin{equation}\label{eqM2}
    \omega_\b(f,\l \d)_p\le C(p,\b)(1+\l)^{\b+\frac 1{p_1}-1}\omega_\b(f,\d)_p,\quad
    \l,\,\d>0,
\end{equation}
where $\b\in \N\cup (1/{p_1}-1,\infty)$, $p_1=\min(p,\,1)$ (see
\cite{BDGS77} for the case $p\ge 1$ and \cite{RS3} for the case $0<p<1$).
Inequality $(\ref{eqM2})$ implies that the optimal rate of decrease of the modulus of smoothness $\omega_\b(f,h)_p$ as $h\to 0$
is $\mathcal{O}(h^{\b+1/{p_1}-1})$, that is if
$\omega_\b(f,h)_p=o(h^{\b+1/{p_1}-1})$, then $f\equiv \textrm{const}$ (see also Proposition 5.1 in~\cite{BW}). It arises a natural question  about characterization of the class of functions $f\in L_p(\T)$, $0<p<\infty$, such that
\begin{equation*}
  \omega_\b(f,h)_p=\mathcal{O}(h^{\b+1/{p_1}-1})\quad \text{as}\quad h\to 0.
\end{equation*}

The first characterization of this class was derived by Hardy and Littlewood~\cite{hl} in the case $\b=1$ and $1\le p<\infty$.
Their result was extended to the moduli of smoothness of integral order in~\cite{br2}
(see also \cite[Ch.~1, \S 9]{DeLo} and \cite[Theorem~4.6.14]{TB}) and to the fractional moduli of smoothness in~\cite{BW}.
In particular, the following proposition was proved by Butzer and Westphal~\cite{BW}.

\begin{proposition}\label{propBW}
Let $f\in L_p(\T)$, $1\le p< \infty$, and $\b>0$. Then $\omega_\beta(f,h)_p=\mathcal{O}(h^{\beta})$ if and only if $f$
 can be corrected on a set of measure zero to be a function $g$ such that
 $g^{(\b)}\in L_p(\T)$ for $1< p<\infty$  and $g^{(\b-1)}\in \textrm{BV}(\T)$ (functions of bounded variation on $\T$) for $p=1$.
\end{proposition}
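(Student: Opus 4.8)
The plan is to prove the two implications separately, using throughout the Fourier-side identity $\widehat{\Delta_h^\beta f}_k=(1-e^{-ikh})^\beta\widehat f_k$ and the (easy, for $p\ge 1$) equivalence of $\omega_\beta(f,h)_p=\mathcal O(h^\beta)$ with the uniform bound $\sup_{h>0}\|\Delta_h^\beta f/h^\beta\|_p<\infty$ (one direction is $\|\Delta_h^\beta f\|_p\le\omega_\beta(f,h)_p$, the other is trivial). Since $p_1=1$ here, inequality~\eqref{eqM2} together with the crude bound $\omega_\beta(f,h)_p\le C\|f\|_p$ will let me pass from estimates for small $h$ to all $h>0$.

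\emph{Sufficiency.} Let first $1<p<\infty$ and suppose $g^{(\beta)}\in L_p(\T)$. Writing $\Delta_h^\beta g/h^\beta=T_h(g^{(\beta)})$, where $T_h$ is the Fourier multiplier operator with symbol $m_k(h)=((1-e^{-ikh})/(ikh))^\beta$, the point is that the $m_k(h)$ satisfy the hypotheses of the Marcinkiewicz multiplier theorem uniformly in $h$ (the relevant size and dyadic-variation estimates are essentially those used in \cite{BDGS77}), so $\sup_h\|T_h\|_{L_p\to L_p}<\infty$; since $m_k(h)\to 1$ as $h\to 0$ for each $k$ and trigonometric polynomials are dense in $L_p$, one gets $T_h\to I$ strongly, hence $\Delta_h^\beta g/h^\beta\to g^{(\beta)}$ in $L_p$ and therefore $\|\Delta_\delta^\beta g\|_p\le C|\delta|^\beta$ for small $\delta$; \eqref{eqM2} then gives $\omega_\beta(g,h)_p=\mathcal O(h^\beta)$. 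For $p=1$ and $g^{(\beta-1)}\in\mathrm{BV}(\T)$, set $\mu=dg^{(\beta-1)}$, a finite measure with $\|\mu\|_{M(\T)}=\mathrm{Var}(g^{(\beta-1)})$; from $\widehat{\Delta_h^\beta g}_k=h^\beta m_k(h)\widehat\mu_k$ one has $\Delta_h^\beta g=h^\beta(K_h*\mu)$ with $K_h$ the $2\pi$-periodization of $h^{-1}B_\beta(\cdot/h)$ and $B_\beta\in L_1(\R)$ the B-spline whose Fourier transform is $((1-e^{-iu})/(iu))^\beta$ (for $\beta>0$ it decays like $|x|^{-\beta-1}$ at infinity and like $|x|^{\beta-1}$ at the origin, hence lies in $L_1$); since $\|K_h\|_{L_1(\T)}\le\|B_\beta\|_{L_1(\R)}$, Young's inequality yields $\|\Delta_h^\beta g\|_1\le Ch^\beta\,\mathrm{Var}(g^{(\beta-1)})$ for all $h>0$, i.e.\ $\omega_\beta(g,h)_1=\mathcal O(h^\beta)$.

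\emph{Necessity.} Assume $\omega_\beta(f,h)_p=\mathcal O(h^\beta)$, so $\{\Delta_h^\beta f/h^\beta\}_{h>0}$ is bounded in $L_p(\T)$. If $1<p<\infty$, pick $h_n\to 0$ along which $\Delta_{h_n}^\beta f/h_n^\beta$ converges weakly in the reflexive space $L_p(\T)$ to some $\phi$; testing against the characters $e^{ikx}$ gives $\widehat\phi_k=\lim_n((1-e^{-ikh_n})/h_n)^\beta\widehat f_k=(ik)^\beta\widehat f_k$ for all $k$ (and $\widehat\phi_0=0$), so $\phi$ is the Weyl derivative $f^{(\beta)}$ and $f^{(\beta)}\in L_p(\T)$; the multiplier argument of the sufficiency part then upgrades this to $\Delta_h^\beta f/h^\beta\to f^{(\beta)}$ in $L_p$, so $f^{(\beta)}$ exists also in the sense \eqref{eqProizvLp}, and one takes $g=f$. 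If $p=1$, view $L_1(\T)\subset M(\T)$ and extract, by weak-$*$ sequential compactness, $h_n\to0$ with $\Delta_{h_n}^\beta f/h_n^\beta\to\mu$ weak-$*$ in $M(\T)$; as before $\widehat\mu_k=(ik)^\beta\widehat f_k$, and in particular $\widehat\mu_0=0$, so $\mu(\T)=0$. Then $F(x):=\mu([0,x))$ defines a $2\pi$-periodic function of bounded variation with $\widehat F_k=\widehat\mu_k/(ik)=(ik)^{\beta-1}\widehat f_k$, i.e.\ $F$ is a representative of the Weyl derivative of order $\beta-1$ of $f$; thus, correcting $f$ on a null set to the function $g$ with $g^{(\beta-1)}=F$ (equivalently, $\widehat g_k=\widehat f_k$, so $g=f$ a.e.) gives $g^{(\beta-1)}\in\mathrm{BV}(\T)$.

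\emph{Main obstacle.} On the technical level the essential ingredient is the uniform-in-$h$ boundedness of $T_h$ on $L_p$ ($1<p<\infty$) and of convolution with $K_h$ on $L_1$ — the latter resting on integrability of the fractional B-spline $B_\beta$ — both classical and contained in \cite{BDGS77}. The genuinely delicate part is the necessity direction for $p=1$: since $L_1(\T)$ is not reflexive, the natural limit of the difference quotients lives a priori only in $M(\T)$, and the passage from ``$f^{(\beta)}$ is a finite measure'' to ``$f$ coincides a.e.\ with a function whose Weyl derivative of order $\beta-1$ has bounded variation'' is exactly where the $\mathrm{BV}$ (rather than $L_1$) conclusion is forced, and where one must handle the correction of $f$ on a null set with care.
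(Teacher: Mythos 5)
The paper does not prove Proposition~\ref{propBW} at all: it is quoted as a known result of Butzer and Westphal~\cite{BW}, so there is no internal argument to compare yours against. Judged on its own, your proof is essentially the classical one and is sound: sufficiency via the identity $\Delta_h^\beta g=h^\beta\,K_h*g^{(\beta)}$ (resp.\ $K_h*dg^{(\beta-1)}$ for $p=1$), necessity via weak (resp.\ weak-$*$) compactness of the bounded family $\Delta_h^\beta f/h^\beta$ and identification of the limit through Fourier coefficients, with the correct observation that $\widehat{\mu}_0=0$ is what makes $F(x)=\mu([0,x))$ a periodic BV representative of the Weyl derivative of order $\beta-1$. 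Two remarks. First, the Marcinkiewicz multiplier step for $1<p<\infty$ is superfluous: the same convolution identity with the fractional B-spline kernel $B_\beta$ and Young's inequality gives $\Vert\Delta_h^\beta g\Vert_p\le Ch^\beta\Vert g^{(\beta)}\Vert_p$ uniformly for all $1\le p<\infty$ at once, so the $p=1$ kernel argument already covers the reflexive case. Second, the facts you assert but do not verify (that $B_\beta\in L_1(\R)$ with decay $|x|^{-\beta-1}$ at infinity and an integrable singularity $|x|^{\beta-1}$ at the origin when $0<\beta<1$, and the uniform-in-$h$ bound $\Vert K_h\Vert_{L_1(\T)}\le\Vert B_\beta\Vert_{L_1(\R)}$ for the periodization) are indeed classical and are exactly the ingredients used in~\cite{BW} and~\cite{BDGS77}; citing them is acceptable, but as written they are the only points where your argument leans on unproved statements. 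The slightly informal phrase ``correcting $f$ on a null set so that $g^{(\beta-1)}=F$'' should be read, as you indicate, as choosing the BV representative of the Weyl derivative of order $\beta-1$ (for $\beta=1$ this is literally the Hardy--Littlewood correction of $f$ itself), which matches the intended meaning of the proposition.
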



In the spaces $L_p(\T)$, $0< p<1$, the class of functions with the optimal rate of decrease of the modulus of smoothness has different nature.
Indeed, it is easy to see that for any step function $f$ one has
$\omega_1(f,h)_p=\mathcal{O}(h^{{1}/{p}})$. A complete description of such functions was obtained by
Krotov~\cite{Kr}.


\begin{proposition}\label{propKrot} {\sc (See~\cite{Kr}.)}
Let $f\in L_p(\T)$, $0<p<1$. Then $\omega_1(f,h)_p=\mathcal{O}(h^{{1}/{p}})$ if and only if $f$
can be corrected on a set of measure zero to be a function $g$ such that $g(x)=d_0+\sum_{x_k<x}d_k$, where $\sum_k
|d_k|^p<\infty$ and $\{x_k\}$ is a set of pairwise distinct point from $[0,2\pi)$.
\end{proposition}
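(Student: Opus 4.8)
\textbf{Proof proposal for Proposition~\ref{propKrot}.}

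The plan is to prove the two implications separately, with the ``if'' direction being elementary and the ``only if'' direction requiring the real work. For the sufficiency, suppose $g(x)=d_0+\sum_{x_k<x}d_k$ with $\sum_k|d_k|^p<\infty$; I would first reduce to a single jump. For the characteristic function $\chi$ of an interval $[a,b)\subset[0,2\pi)$, a direct computation of $\Delta_\d^1\chi(x)=\chi(x)-\chi(x-\d)$ shows that this difference is supported on a set of measure $\le 2|\d|$, so $\|\Delta_\d^1\chi\|_p^p\le C|\d|$ and hence $\w_1(\chi,h)_p=\mathcal O(h^{1/p})$. Since $g$ is (up to the constant $d_0$) a $\ell^p$-combination $\sum_k d_k\chi_k$ of such indicator functions and $\w_1(\cdot,h)_p^p$ is subadditive in the $p$-th power (the quasi-norm $\|\cdot\|_p^p$ is subadditive for $0<p<1$), one gets $\w_1(g,h)_p^p\le\sum_k|d_k|^p\,\w_1(\chi_k,h)_p^p\le C\,h\sum_k|d_k|^p$, which is the desired bound. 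A mild technical point is the $2\pi$-periodization of the $\chi_k$'s and handling overlaps, but these do not affect the $p$-th-power estimate.

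For the necessity, assume $\w_1(f,h)_p=\mathcal O(h^{1/p})$; the goal is to produce the jump data $\{(x_k,d_k)\}$ with $\sum|d_k|^p<\infty$. The natural route is via approximation: by the inverse-type information, the hypothesis $\w_1(f,1/n)_p=\mathcal O(n^{-1/p})$ together with the Jackson inequality~\eqref{eqJ} gives $E_n(f)_p=\mathcal O(n^{-1/p})$, i.e.\ there are polynomials $T_n\in\mathcal T_{2^n}$ with $\|f-T_n\|_p\le C2^{-n/p}$. Consider the lacunary telescoping $f = T_{1} + \sum_{n\ge1}(T_{2^{n+1}}-T_{2^n})$ with $\|T_{2^{n+1}}-T_{2^n}\|_p\le C2^{-n/p}$. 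The classical idea (going back to the $p\ge1$ case and Krotov's argument) is to show that each block $U_n:=T_{2^{n+1}}-T_{2^n}$ is close, in a suitable sense, to a step function with controlled jumps: one extracts from the smallness of $\|U_n\|_p$ the fact that $U_n$ is ``almost constant away from a small exceptional set,'' quantifies the total jump contributed at scale $2^n$, and checks that these jumps, summed over $n$ in $\ell^p$, converge. Concretely I would estimate, for a polynomial $T\in\mathcal T_m$, the quantity $\big(\sum_j |T(y_{j+1})-T(y_j)|^p\big)^{1/p}$ over a partition into $\sim m$ arcs, by a Bernstein/Nikolskii-type argument bounding it by $C\,m^{1/p-1}\|T'\|_{?}$ or directly by $C\,\|T\|_p$ using that $T$ restricted to an arc of length $\sim1/m$ has oscillation controlled by its $L_p$ mass on a slightly larger arc; applying this to $U_n$ with $m=2^{n+1}$ yields a bound $\le C2^{-n/p}\cdot 2^{n/p}=C$ in the wrong normalization, so the correct bookkeeping must weight each block so that $\sum_n(\text{block jump})^p\le C\sum_n 2^{n}\|U_n\|_p^p\le C\sum_n 2^{n}2^{-n}<\infty$. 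The points $x_k$ are then the accumulation points of the partition nodes where the jumps concentrate, and $d_k$ the limiting jump heights; one shows the resulting series $d_0+\sum_{x_k<x}d_k$ converges in $L_p$ to a function $g$ agreeing with $f$ a.e.\ by comparing it block-by-block with the $T_{2^n}$.

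I expect the main obstacle to be exactly this last step: organizing the passage from the sequence of polynomial blocks $U_n$ to a single well-defined step function, i.e.\ proving that the jump locations stabilize and that the $\ell^p$-sum of jump heights is finite, rather than merely bounded on each dyadic level. This requires a careful ``almost orthogonality'' of jumps at different scales (so that jumps do not cancel or pile up uncontrollably when summed) and a limiting argument identifying $\{x_k\}$ as a genuine countable set. The tools are standard for $0<p<1$ — the $p$-subadditivity of $\|\cdot\|_p^p$, the Jackson inequality~\eqref{eqJ}, Bernstein-type inequalities for trigonometric polynomials in $L_p$, and the Nikolskii inequality relating $L_p$ and $L_\infty$ (or pointwise) norms on arcs of length $1/m$ — but their combination into a convergent jump decomposition is the delicate part, and I would follow Krotov's original scheme closely here, supplying the $0<p<1$ estimates via the results quoted in the Introduction.
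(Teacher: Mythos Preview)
The paper does not give its own proof of Proposition~\ref{propKrot}: the result is quoted from Krotov~\cite{Kr} with the explicit attribution ``(See~\cite{Kr})'' and is used as a black box in the proof of Theorem~\ref{th2}. So there is no in-paper argument to compare your proposal against.

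On the merits of your proposal itself: the sufficiency direction is fine and is essentially the argument the paper \emph{does} carry out for the more general $\w_\b$ case in the proof of Theorem~\ref{th2} (see~\eqref{eqprth2.1}--\eqref{eqprth2.3}), namely subadditivity of $\|\cdot\|_p^p$ together with the elementary bound $\w_1(\chi_{[a,b)},h)_p^p\le Ch$. The necessity direction, however, is not Krotov's route and your sketch has a real gap. Krotov's argument in~\cite{Kr} works pointwise with approximate one-sided limits and Lebesgue-density arguments to identify the jump set $\{x_k\}$ directly from $f$, then verifies $\sum|d_k|^p<\infty$ from the hypothesis. Your polynomial-block scheme, by contrast, must manufacture a discrete jump set out of smooth objects $U_n=T_{2^{n+1}}-T_{2^n}$; the passage you flag as ``the delicate part''---showing that jump locations stabilize across scales and that the $\ell^p$-sum of heights is finite rather than merely $O(1)$ at each level---is not just delicate, it is where the whole argument lives, and nothing in your outline indicates how to do it. In particular the back-of-envelope computation $\sum_n 2^n\|U_n\|_p^p\le C\sum_n 1$ diverges, so the bookkeeping you propose does not close. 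If you want to reprove the result rather than cite it, following Krotov's original real-variable approach is the safer path.
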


See also in~\cite{br3} and~\cite{kolmod}  analogues of Proposition~\ref{propKrot} with the moduli of smoothness of arbitrary integer order.


It is worth mentioning the following unusual property of the modulus of continuity: \emph{if $f\in AC(\T)$ (absolutely continuous functions on $\mathbb{T}$) and
$$
\omega_1(f,h)_p=o(h)\,\,\,\,\,\textrm{as}\,\,\,\,\,h\to 0
$$
for some $0< p< 1$, then $f\equiv\textrm{\rm const}$ a.e. on $\mathbb{T}$} (see Lemma 1.5 in~\cite{SKO75}).
%

\smallskip

Using Theorem~\ref{thModFrD}, it is easy to extend this property to the moduli of smoothness of fractional order. In particular, we have the following result:

\begin{proposition}\label{proposition}
Let $0<p<1$, $\b\in \N\cup (1/p-1,\infty)$, $f^{(\b-1)}\in AC(\T)$, and
$$
\w_\b(f,\d)_p=o(\d^\b),\quad \d\to 0,
$$
then $f\equiv\textrm{\rm const}$ a.e. on $\mathbb{T}$.
  \end{proposition}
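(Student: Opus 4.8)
The plan is to reduce the statement to Theorem~\ref{thModFrD} applied with $\b$ replaced by $1$ and $\a$ replaced by $\b-1$, using the hypothesis $f^{(\b-1)}\in AC(\T)$ together with the known rigidity fact that $\w_1(g,\d)_p=o(\d)$ forces $g$ to be constant when $g\in AC(\T)$ and $0<p<1$ (Lemma~1.5 in~\cite{SKO75}, quoted just above the statement). The only genuine content is to transfer the hypothesis $\w_\b(f,\d)_p=o(\d^\b)$ into the hypothesis $\w_1(f^{(\b-1)},\d)_p=o(\d)$.

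First I would invoke Theorem~\ref{thModFrD} with the pair of parameters $(\a,\b)\rightsquigarrow(\b-1,1)$; this is legitimate provided $\b-1$ and $1$ lie in $\N\cup(1/p-1,\infty)$ and the sum $\b=(\b-1)+1$ does as well. Since $\b\in\N\cup(1/p-1,\infty)$ and $\b>1/p-1>0$, the value $\b-1$ is either in $\N\cup\{0\}$ or satisfies $\b-1>1/p-2$; one checks that in all admissible cases $\b-1\ge 0$ and $\b-1\in\N\cup(1/p-1,\infty)$ or equals $0$ (the case $\b-1=0$, i.e. $\b=1$, is exactly the Krotov/SKO75 situation and the conclusion is immediate from Lemma~1.5 in~\cite{SKO75}). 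Assuming $\b>1$, Theorem~\ref{thModFrD} gives
\begin{equation*}
\w_1(f^{(\b-1)},\d)_p\le C\left(\int_0^\d\frac{\w_\b(f,t)_p^p}{t^{p(\b-1)+1}}\,dt\right)^{1/p}.
\end{equation*}
Now substitute the hypothesis: for every $\e>0$ there is $\d_0$ with $\w_\b(f,t)_p^p\le\e\, t^{\b p}$ for $t<\d_0$. Then for $\d<\d_0$ the integrand is at most $\e\,t^{\b p-p(\b-1)-1}=\e\,t^{p-1}$, whose integral over $(0,\d)$ equals $\e\,\d^{p}/p$. Hence $\w_1(f^{(\b-1)},\d)_p\le C(\e/p)^{1/p}\d$, i.e. $\w_1(f^{(\b-1)},\d)_p=o(\d)$ as $\d\to0$.

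Finally, since $f^{(\b-1)}\in AC(\T)$ by assumption, Lemma~1.5 of~\cite{SKO75} applies to $g=f^{(\b-1)}$ and yields $f^{(\b-1)}\equiv\textrm{const}$ a.e., whence $f^{(\b-2)}$ is affine, and being $2\pi$-periodic it is constant, and inductively $f\equiv\textrm{const}$ a.e. on $\T$. The main obstacle — and it is a minor bookkeeping point rather than a real difficulty — is verifying that the parameter $\b-1$ is admissible for Theorem~\ref{thModFrD} (membership in $\N\cup(1/p-1,\infty)$) in every case covered by the hypothesis $\b\in\N\cup(1/p-1,\infty)$, and separating out the boundary case $\b=1$ where one appeals to the quoted lemma directly rather than through Theorem~\ref{thModFrD}.
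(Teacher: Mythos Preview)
Your strategy—reduce to $\w_1(f^{(\b-1)},\d)_p=o(\d)$ via Theorem~\ref{thModFrD} and then apply Lemma~1.5 of~\cite{SKO75}—is precisely what the paper intends; the paper itself offers no more than the remark that the proposition follows ``using Theorem~\ref{thModFrD}''. But the step you flag as ``minor bookkeeping'' is a genuine gap, not a formality. Your claim that $\b-1\in\N\cup(1/p-1,\infty)$ whenever $\b>1$ and $\b\in\N\cup(1/p-1,\infty)$ is false on the strip $\max(1,\,1/p-1)<\b\le 1/p$ with $\b\notin\N$: for instance $p=1/3$, $\b=5/2$ gives $\b-1=3/2\notin\N\cup(2,\infty)$, so Theorem~\ref{thModFrD} cannot be invoked with $\a=\b-1$ there, and no other splitting $\a+\b'=\b$ with $\a,\b'\in\N\cup(1/p-1,\infty)$ exists either. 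You also do not treat $\b<1$ at all (this range is nonempty when $1/2<p<1$); then $\b-1<0$ and the theorem is inapplicable in the role you assign it. In the paper's proof of the closely related implication $(i)\Rightarrow(ii)$ of Theorem~\ref{th2}, the case $\b<1$ is handled instead via Theorem~\ref{thDif} applied to $I_{1-\b}f=f^{(\b-1)}$, which gives
\[
\w_1(f^{(\b-1)},\d)_p\le C\,\d^{\frac1p-\b}\Big(\int_0^\d\frac{\w_\b(f,t)_p^p}{t^{2-p}}\,dt\Big)^{1/p},
\]
and under $\w_\b(f,t)_p=o(t^\b)$ the right-hand side is indeed $o(\d)$.

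A smaller point: the closing ``inductively $f\equiv\textrm{const}$'' is fine for integer $\b$, but for fractional $\b$ there is no induction to run. Once $f^{(\b-1)}\equiv c$, compare Fourier coefficients: $(ik)^{\b-1}\widehat f_k=0$ for all $k\ne 0$, hence $\widehat f_k=0$ for $k\ne 0$, so $f$ is constant a.e.
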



In the next theorem, we generalize Proposition~\ref{propBW} to the case  $0<p<1$ and Proposition~\ref{propKrot} to the fractional moduli of smoothness of arbitrary order $\b\in \N\cup (1/p-1,\infty)$.

\begin{theorem}\label{th2}
{\it
Let $f\in L_p(\T)$, $0<p<1$, and
$\b\in \N\cup(1/p-1,\infty)$. Then the following assertions are equivalent:
\begin{enumerate}

\item[$(i)$] $\omega_\b(f,h)_p=\mathcal{O}(h^{\b+1/p-1})$ as $h\to 0$,

\smallskip

\item[$(ii)$]  $f\in L_1(\T)$ and it can be corrected on a set of measure zero to be a function $g$ such that $g^{(\b-1)}(x)=d_0+\sum_{x_k<x}d_k$, where $\sum_k
|d_k|^p<\infty$ and $\{x_k\}$ is a set of pairwise distinct point from $[0,2\pi)$.

\end{enumerate}
}
\end{theorem}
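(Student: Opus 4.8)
The plan is to prove the two implications separately, using the characterization of the optimal rate of decrease that is already available for $\b=1$ (Proposition~\ref{propKrot}) together with the transference results between $f$ and its fractional derivative established earlier in the paper.

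For the implication $(ii)\Rightarrow(i)$, I would first reduce to the case $\b=1$. Suppose $g$ is the correction of $f$ with $g^{(\b-1)}(x)=d_0+\sum_{x_k<x}d_k$ and $\sum_k|d_k|^p<\infty$. By Proposition~\ref{propKrot} applied to the function $h:=g^{(\b-1)}$, we get $\w_1(h,\d)_p=\mathcal{O}(\d^{1/p})$; equivalently, by Proposition~\ref{propCl}, $E_n(h)_p=\mathcal{O}(n^{-1/p})$, i.e. $E_n(f^{(\b-1)})_p=\mathcal{O}(n^{-1/p})$. Note $\b-1>1/p-2$, and since $\b\in\N\cup(1/p-1,\infty)$ one checks that $\b-1\in\N\cup(1/p-1,\infty)$ only when $\b\ge 2$ or $\b\in(1/p-1,\infty)$ with $\b-1>1/p-1$; when $1/p-1<\b<1/p$ one has $\b-1\le 1/p-1$, so I would need Theorem~\ref{-thsec3.A++++} (the version valid for all $\a>0$) rather than Theorem~\ref{thsec3.A++++}. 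Applying the appropriate inverse-type inequality with the fractional derivative of order $\a=\b-1$ to pass from the decay of $E_n(f^{(\b-1)})_p$ back to a decay of $\w_\b(f,\d)_p$: more directly, apply Theorem~\ref{thDif} with $\a=\b-1$ and the modulus $\w_1(f^{(\b-1)},\cdot)_p$ in place of $\w_\b$, noting $\w_{1+(\b-1)}(f,\d)_p=\w_\b(f,\d)_p$. Feeding $\w_1(f^{(\b-1)},t)_p=\mathcal{O}(t^{1/p})$ and $\w_r(f^{(\b-1)},t)_p=\mathcal{O}(t^{1/p})$ (the latter from monotonicity-type estimates, since a higher-order modulus is controlled by a lower-order one up to the saturation order) into the right-hand side of \eqref{eqDif1}, the integral $\int_0^\d t^{-2+p}\cdot t$ contributes $\d^{\cdots}$; a direct computation gives $\w_\b(f,\d)_p=\mathcal{O}(\d^{\b-1}\cdot\d^{1/p})=\mathcal{O}(\d^{\b+1/p-1})$, which is $(i)$.

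For the harder implication $(i)\Rightarrow(ii)$, suppose $\w_\b(f,\d)_p=\mathcal{O}(\d^{\b+1/p-1})$. By the direct theorem \eqref{eqJ}, $E_n(f)_p=\mathcal{O}(n^{-\b-1/p+1})$, so for $\a=\b-1$ the series $\sum_\nu \nu^{\a p-1}E_\nu(f)_p^p=\sum_\nu \nu^{(\b-1)p-1}\cdot\nu^{-(\b+1/p-1)p}=\sum_\nu \nu^{-2}<\infty$ converges. Hence by Theorem~\ref{thsec3.A} (when $\b-1\in\N\cup(1/p-1,\infty)$) or its extension, $f$ has the derivative $f^{(\b-1)}$ in the sense of $L_p$, and from \eqref{eqthsec3.A.2} one estimates $E_n(f^{(\b-1)})_p\le C(n^{\b-1}E_n(f)_p+(\sum_{\nu>n}\nu^{-2})^{1/p})=\mathcal{O}(n^{-1/p})$. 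Thus $f^{(\b-1)}$ has a modulus $\w_1(f^{(\b-1)},\d)_p=\mathcal{O}(\d^{1/p})$ by \eqref{eqB}, and Proposition~\ref{propKrot} gives a correction $h$ of $f^{(\b-1)}$ of the stated jump-function form $h(x)=d_0+\sum_{x_k<x}d_k$ with $\sum|d_k|^p<\infty$. It remains to integrate this back $\b-1$ times to recover a correction $g$ of $f$ with $g^{(\b-1)}=h$. This last step is where I expect the main obstacle: one must show that $f$, whose fractional derivative of order $\b-1$ (in the $L_p$ sense) coincides a.e. with the explicit function $h$, can genuinely be corrected to an antiderivative of $h$ of the required order — i.e. that the $L_p$-sense fractional derivative is, up to a polynomial/constant, invertible by fractional integration against the concrete representative $h$. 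For $\b\in\N$ this is classical iterated integration ($g$ is the $(\b-1)$-fold indefinite integral of $h$ plus a polynomial of degree $\le\b-2$, which must vanish by periodicity and the integrability constraints, leaving only the constant $d_0$ hidden in $h$). For non-integer $\b$, one invokes that $f,f^{(\a)}\in L_1$ forces agreement of the $L_p$-derivative with the Weyl derivative (the remark after \eqref{eqProizvLp}), and then the Fourier-side identity $\widehat{h}_k=(ik)^{\b-1}\widehat f_k$ for $k\ne0$ identifies $f$ with the Weyl fractional integral of $h$ up to an additive constant; the needed regularity of $g$ is then read off from the explicit series. I would present the integer case in detail and indicate the fractional case via the Fourier-coefficient comparison, checking in each case that the constant $d_0$ in $(ii)$ absorbs the only free parameter.
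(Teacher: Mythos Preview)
Your overall strategy---reduce to $\b=1$ via Proposition~\ref{propKrot} and transfer between $f$ and $f^{(\b-1)}$---matches the paper's, but the execution differs in both directions and has gaps.

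For $(ii)\Rightarrow(i)$ the paper does \emph{not} go through Proposition~\ref{propKrot} or Theorem~\ref{thDif}. It writes $f=d_0'+\sum_k d_k\, I_{\b-1}h_{x_k}$ (where $h_\eta$ is a Heaviside jump), uses $p$-subadditivity \eqref{eqM0}, and proves the single-block estimate $\w_\b(I_{\b-1}h_\eta,\d)_p\le C\d^{\b+1/p-1}$ by choosing $\a>1/p-1$ with $\a+\b=r\in\N$, applying Theorem~\ref{thModFrD}, and invoking the known integer-order bound $\w_r(I_{r-1}h_\eta,\d)_p\le C\d^{r+1/p-1}$. Your route via Theorem~\ref{thDif} with $\a=\b-1$ breaks down when $\b\le1$ (which occurs for $p>1/2$), since that theorem requires $\a>0$. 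Also, your justification ``from monotonicity-type estimates'' for $\w_r(f^{(\b-1)},t)_p=\mathcal O(t^{1/p})$ is not valid: in $L_p$, $p<1$, higher-order moduli are \emph{not} dominated by lower-order ones. The bound is true, but because step functions satisfy it directly; you would have to argue that separately.

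For $(i)\Rightarrow(ii)$ you omit the proof that $f\in L_1(\T)$, which is explicitly part of $(ii)$; the paper obtains it from an Ul'yanov-type inequality. The paper then applies Theorem~\ref{thModFrD} (when $\b\ge1$) or Theorem~\ref{thDif} (when $\b<1$, writing $f^{(\b-1)}=I_{1-\b}f$) to get $\w_1(f^{(\b-1)},\d)_p=\mathcal O(\d^{1/p})$ and simply invokes Proposition~\ref{propKrot}. No ``integrating back'' is needed: once $f\in L_1$, the Weyl derivative $f^{(\b-1)}$ is determined by the Fourier coefficients of $f$, so the correction furnished by Proposition~\ref{propKrot} for $f^{(\b-1)}$ is exactly the statement in $(ii)$ with $g=f$. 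Your detour through best approximations and Theorem~\ref{thsec3.A} is workable in principle but, as you yourself note, immediately hits the restriction $\a\in\N\cup(1/p-1,\infty)$ and then requires the $f,f^{(\a)}\in L_1$ hypothesis of the substitute Theorem~\ref{propanal1}---which again forces you to prove $f\in L_1$ first.
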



\section{Auxiliary results}

\subsection{Properties of the fractional moduli of smoothness}
First of all, we note that in the case $0<p<1$, considering the fractional derivatives  in the sense of $L_p$ and the corresponding  moduli of smoothness, we restrict ourselves to the parameter $\a$ belonging to the set $\N\cup (1/p-1,\infty)$. This restriction is natural since for $\a\in\N\cup (1/p-1,\infty)$ we always have
\begin{equation}\label{restr}
  \Vert \D_\d^\a f\Vert_{p}^p\le\sum_{\nu=0}^\infty \Big|\binom{\a}{\nu}\Big|^p\Vert f\Vert_p^p\le C(\a,p)\Vert f\Vert_p^p.
\end{equation}
The last inequality follows from the fact that $|\binom{\a}{\nu}|=\mathcal{O}({\nu^{-\a-1}})$ as $\nu\to\infty$ (see, e.g.,~\cite[Ch.~1,\,\S 1]{SKM}).

Let us recall two basic properties of the fractional moduli of smoothness.
For $f\in L_p(\T)$, $0<p\le \infty$, and $\a \in (1/p_1-1,\infty)\cup \N$, we have
\begin{equation}\label{eqM0}
       \omega_\a(f+g,\d)_p^{p_1}\le \omega_\a(f,\d)_p^{p_1}+\omega_\a(g,\d)_p^{p_1},\quad \d>0,
\end{equation}
\begin{equation}\label{eqM1}
       \omega_\a(f,\d)_p\le C(p,\a)\Vert f\Vert_p,\quad \d>0,
\end{equation}
where $p_1=\min(p,\,1)$. Inequality~\eqref{eqM0} is obvious while inequality~\eqref{eqM1} can be derived from~\eqref{restr}.

It is well known (see~\cite{BDGS77}) that if $1\le p\le\infty$, the
modulus of smoothness is equivalent to the $K$-functional given by
$$
 {K}_{\a}(f,\d)_{p}=\inf_{g^{(\a)}\in L_p(\T)} \(\|f-g\|_{p}+ \delta^\alpha \| g^{(\a)}\|_{p}\),
$$
that is,
\begin{equation*}
\omega_\a(f,\delta)_{p}
\asymp {K}_{\a}(f,\d)_{p},\quad \d>0.
\end{equation*}

This equivalence fails for  $0<p<1$ since $K_\a(f,\d)_{p}\equiv 0$ (see \cite{DHI}). A
suitable substitute for the $K$-functional for $p<1$ is the
realization concept given by
\begin{equation*}
\mathcal{R}_{\a}(f,\d)_{p}=\inf_{T\in\mathcal{T}_{[1/\d]}}\(\Vert
f-T\Vert_{p}+\d^{\a}\Vert T^{(\a)}\Vert_{p}\).
\end{equation*}

Let us recall some properties of the realization $\mathcal{R}_{\a}(f,\d)_{p}$.

\begin{lemma}\label{LemKfunc}
  Let $f\in L_p(\T)$, $0<p\le\infty$, and $\a\in \N\cup (1/p_1-1,\infty)$. Then
$$
\mathcal{R}_{\a}(f,\d)_{p}\asymp \w_\a(f,\d)_p,\quad \d>0,
$$
where $\asymp$ is a two-sided inequality with absolute constants independent of $f$ and $\d$.
\end{lemma}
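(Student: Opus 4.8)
The plan is to prove the realization equivalence $\mathcal{R}_{\a}(f,\d)_{p}\asymp \w_\a(f,\d)_p$ by establishing the two inequalities separately, following the standard scheme adapted to $0<p<1$ via quasi-triangle inequalities with exponent $p_1=\min(p,1)$.

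\medskip

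\textbf{The lower bound} $\w_\a(f,\d)_p \le C\,\mathcal{R}_\a(f,\d)_p$. Fix any $T\in\mathcal{T}_{[1/\d]}$. Using the $p_1$-subadditivity of $\w_\a$ in the function argument~\eqref{eqM0}, I would write
\begin{equation*}
\w_\a(f,\d)_p^{p_1}\le \w_\a(f-T,\d)_p^{p_1}+\w_\a(T,\d)_p^{p_1}.
\end{equation*}
The first term is bounded by $C\|f-T\|_p^{p_1}$ via~\eqref{eqM1}. For the second term I need $\w_\a(T,\d)_p\le C\d^\a\|T^{(\a)}\|_p$ for a polynomial $T$ of degree $[1/\d]$; this is a Jackson-type estimate for polynomials which follows by writing $\D_\d^\a T$ in terms of a multiplier operator applied to $T^{(\a)}$ (the symbol $(e^{-ik\d}-1)^\a/(ik)^\a$ times the cutoff to $|k|\le[1/\d]$ has $L_p$-bounded convolution kernel with norm $\mathcal{O}(\d^\a)$ — this is exactly the kind of Fourier-multiplier lemma for $L_p$, $p<1$, on polynomials that the paper relies on elsewhere, e.g.\ behind \eqref{eqM2}). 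Taking the infimum over $T$ gives the lower bound.

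\medskip

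\textbf{The upper bound} $\mathcal{R}_\a(f,\d)_p\le C\,\w_\a(f,\d)_p$. Here I would exhibit a near-optimal polynomial. A natural choice is $T=\eta_n\ast f$ where $n=[1/\d]$ and $\eta_n$ is a de la Vallée Poussin-type kernel (a smooth cutoff that equals $1$ on $|k|\le n$ and is supported on $|k|\le 2n$, normalized so that its $L_1$, or rather $\ell$-something, behaviour is controlled). One shows (i) $\|f-T\|_p\le C\w_\a(f,\d)_p$ and (ii) $\d^\a\|T^{(\a)}\|_p\le C\w_\a(f,\d)_p$. Both reduce to estimating convolution operators whose symbols are built from $\widehat{\eta}_n(k)$ and the factors $(e^{-ik\d}-1)^{-\a}$, $(ik)^\a$; the point is that these symbols, after the cutoff, are symbols of operators bounded on $L_p(\T)$, $p<1$, with norms independent of $n$, because of smoothness/decay estimates on the associated kernels. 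Then applying such an operator to $\D_\d^\a f$ (whose $L_p$-norm is $\le\w_\a(f,\d)_p$) yields the bounds. One must also handle the degree-zero Fourier coefficient / the decomposition of $L_p$ functions with $\widehat f_0$ absorbed into a constant, but since adding a constant changes neither $\|f-T\|_p$-type terms appropriately nor $\w_\a$, this is routine.

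\medskip

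\textbf{Main obstacle.} The crux is not the soft subadditivity manipulations but the two Fourier-multiplier estimates on $L_p(\T)$, $0<p<1$: (a) $\|\D_\d^\a T\|_p\le C\d^\a\|T^{(\a)}\|_p$ for $T\in\mathcal{T}_n$ with $n\le 1/\d$, and (b) the boundedness on $L_p$ of the smooth-cutoff operators used to build $T$ from $f$, uniformly in the dilation parameter. In the range $p<1$ there is no Minkowski/Young convolution inequality in the usual form, so these require either a Nikolskii-type inequality combined with explicit kernel bounds, or a direct appeal to a multiplier theorem for $L_p$, $p<1$ (of the type due to Stein–Weiss / to the results cited in~\cite{BDGS77}, \cite{RS3}); verifying that the particular symbols here meet the hypotheses — uniformly in $\d$ and the polynomial degree — is the technical heart. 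Given those two lemmas, both directions close immediately by taking infima, and the stated two-sided equivalence with absolute constants follows.
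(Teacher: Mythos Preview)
The paper does not give its own proof of this lemma; it cites the result from~\cite{DHI} for $\a\in\N$ and from~\cite{K11},~\cite{ST} for $\a>1/p_1-1$. So there is no in-paper argument to compare against.

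Your outline is the standard one and is essentially correct; the obstacle you isolate---multiplier bounds for trigonometric polynomials in $L_p$, $p<1$---is exactly where the content lies. One caution on the upper bound: you set $T=\eta_n*f$ with a de la Vall\'ee Poussin kernel, but for $f\in L_p(\T)$ with $p<1$ the convolution is not a priori defined (such $f$ need not lie in $L_1(\T)$, so neither the integral nor the Fourier-coefficient formula for $\eta_n*f$ makes immediate sense). This can be made rigorous in the Runovski--Schmeisser framework you cite, but a cleaner fix is to take $T$ to be a polynomial of best approximation in $L_p$: then $\|f-T\|_p=E_n(f)_p\le C\w_\a(f,1/n)_p$ by the Jackson inequality (Proposition~\ref{propCl}), and $\d^\a\|T^{(\a)}\|_p\le C\|\D_\d^\a T\|_p\le C\w_\a(f,1/n)_p$ by the two parts of Lemma~\ref{lem1}. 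That same Lemma~\ref{lem1} also supplies the polynomial inequality $\|\D_h^\a T\|_p\le Ch^\a\|T^{(\a)}\|_p$ for $T\in\mathcal{T}_{[1/\d]}$ and $|h|\le\d$ that you need in the lower bound. So both multiplier facts you flag as the crux are already packaged in the paper as Lemma~\ref{lem1} (itself quoted from~\cite{DHI},~\cite{K07}); once that is granted, your argument closes exactly as you say.
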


Remark that in the case $\a\in \N$, Lemma~\ref{LemKfunc} was proved in~\cite{DHI}; the case $\a>1/p_1-1$ was considered in~\cite{K11} and~\cite{ST}.

The next lemma gives an analogue of inequality~\eqref{eqM2} for the realizations of $K$-functional.

\begin{lemma}\label{LemKfunc+} {\sc (See \cite[Theorem 4.22]{run}, \cite{RS3}).}
  Let $f\in L_p(\T)$, $0<p\le\infty$, and $\a>0$. Then
$$
\mathcal{R}_\a(f,\l \d)_p\le C(1+\l)^{\a+\frac 1{p_1}-1}\mathcal{R}_\a(f,\d)_p,\quad
    \l,\,\d>0,
$$
where $C$ is a constant, which depends only on  $p$ and $\a$.
\end{lemma}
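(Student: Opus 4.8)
The plan is to reduce the statement about the realization functional to the corresponding statement about the modulus of smoothness, using Lemma~\ref{LemKfunc}, and then to invoke inequality~\eqref{eqM2}. Indeed, by Lemma~\ref{LemKfunc} we have the two-sided equivalence $\mathcal{R}_\a(f,\d)_p\asymp\w_\a(f,\d)_p$ with constants depending only on $p$ and $\a$, valid for all $\d>0$ whenever $\a\in\N\cup(1/p_1-1,\infty)$. Chaining these equivalences at the scales $\l\d$ and $\d$ and applying~\eqref{eqM2} gives
\begin{equation*}
\mathcal{R}_\a(f,\l\d)_p\le C_1\w_\a(f,\l\d)_p\le C_1C(p,\a)(1+\l)^{\a+\frac1{p_1}-1}\w_\a(f,\d)_p\le C_2(1+\l)^{\a+\frac1{p_1}-1}\mathcal{R}_\a(f,\d)_p,
\end{equation*}
which is exactly the asserted inequality, with a constant depending only on $p$ and $\a$. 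This handles all $\a\in\N\cup(1/p_1-1,\infty)$ at once.

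The remaining case is $\a\in(0,1/p_1-1]$ when $p<1$ (for $p\ge1$ one has $1/p_1-1=0$, so there is nothing left). Here $\a$ may lie outside the range for which~\eqref{eqM2} and Lemma~\ref{LemKfunc} are stated, so I would argue directly from the definition of $\mathcal{R}_\a$. Fix $\l,\d>0$; without loss of generality $\l\ge1$ (for $\l<1$ the factor $(1+\l)^{\a+1/p_1-1}$ is bounded below and above by absolute constants, and $\mathcal{R}_\a$ is essentially monotone in $\d$ up to the comparison of the polynomial degrees $[1/\d]$ and $[1/(\l\d)]$, so this sub-case follows from a short separate argument). Pick a near-optimal polynomial $T\in\mathcal{T}_{[1/\d]}$ for $\mathcal{R}_\a(f,\d)_p$, so that $\|f-T\|_p+\d^\a\|T^{(\a)}\|_p\le 2\mathcal{R}_\a(f,\d)_p$. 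Since $[1/(\l\d)]\le[1/\d]$, the same polynomial $T$ is admissible for $\mathcal{R}_\a(f,\l\d)_p$ only if we are willing to replace $T$ by a lower-degree polynomial; instead I would keep $T$ and estimate $\mathcal{R}_\a(f,\l\d)_p$ by first replacing $T$ with its de~la~Vall\'ee~Poussin-type truncation $V$ of degree $[1/(\l\d)]$, using that $\|T-V\|_p\le C\|f-T\|_p$ (a standard near-best-approximation fact in $L_p$, $0<p<1$) and $\|V^{(\a)}\|_p\le C[1/(\l\d)]^{\a}\cdot\l^{-1/p_1+1}$-type bounds coming from the Bernstein inequality for fractional derivatives quoted in the introduction. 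The key point is that the fractional Bernstein inequality in $L_p$, $0<p<1$, produces for $\a<1/p-1$ the factor $n^{1/p-1}$ rather than $n^\a$, and it is precisely this that yields the exponent $\a+1/p_1-1$ rather than $2\a$ in the final bound.

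The main obstacle is exactly this last case: one has to track carefully how the fractional Bernstein inequality interacts with the degree change from $[1/\d]$ to $[1/(\l\d)]$, and to check that the three regimes of $\s_{\a,p}$-type behaviour (whether $\a>1/p-1$, $\a=1/p-1$, or $\a<1/p-1$) all collapse to the single clean exponent $\a+1/p_1-1$ claimed in the lemma. For $\a>1/p_1-1$ everything is already subsumed by the first paragraph; for $\a\le 1/p_1-1$ the Bernstein factor $n^{1/p-1}=n^{1/p_1-1}$ dominates $n^\a$, so $\d^\a\|T^{(\a)}\|_p$ does not simply transfer but must be re-derived, and the logarithmic case $\a=1/p-1$ needs the observation that $\log^{1/p}$ is absorbed into any positive power of $1+\l$. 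Once these bookkeeping points are settled, combining the two paragraphs completes the proof; alternatively, one may cite~\cite[Theorem~4.22]{run} for the full range, which is the route I would take in the write-up, presenting the reduction in the first paragraph as the conceptual core and relegating the $\a\le1/p_1-1$ verification to a reference.
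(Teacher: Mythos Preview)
The paper does not give a proof of this lemma: it is stated with references to \cite[Theorem~4.22]{run} and \cite{RS3}, and your closing suggestion---to cite the source for the full range---is precisely what the paper does.

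Your first paragraph is correct and clean for $\a\in\N\cup(1/p_1-1,\infty)$. But note that the paper explicitly remarks, immediately after the lemma, that its whole point (in contrast to~\eqref{eqM2}) is that \emph{no} restriction $\a>1/p-1$ is imposed; and in the proof of Theorem~\ref{thsec3.1} the lemma is applied with the parameter $\a$ of that theorem, which is an arbitrary positive number. So the range $0<\a\le 1/p-1$ is not a fringe case one can relegate---it is the substantive content.

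For that range your second paragraph has a genuine gap. The claimed ``standard near-best-approximation fact'' $\|T-V\|_p\le C\|f-T\|_p$, with $V$ a de~la~Vall\'ee~Poussin-type truncation of $T$ to degree $m=[1/(\l\d)]$, is simply false: take $f=T\in\mathcal{T}_n$ with $n>2m$; the right-hand side vanishes while the left-hand side does not. More generally, passing from degree $[1/\d]$ down to degree $m$ costs $E_m(f)_p$, which is not controlled by $\|f-T\|_p$ alone. The Bernstein-inequality bookkeeping you sketch also points the wrong way: for $\a<1/p-1$ it yields $(\l\d)^\a\|V^{(\a)}\|_p\lesssim (\l\d)^{\a-(1/p-1)}\|V\|_p$, and since $\a-(1/p-1)<0$ this blows up rather than producing the factor $(1+\l)^{\a+1/p-1}$. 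The arguments in the cited references do not reduce to~\eqref{eqM2}; they work directly with families of linear polynomial operators together with the Nikol'skii--Stechkin equivalence (the first part of Lemma~\ref{lem1}), which is available for all $\a>0$. If you want to include a self-contained proof rather than a citation, that is the route to pursue.
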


Note that in above inequality in contrast with~\eqref{eqM2}, we do not assume that $\a>1/p-1$ in the case $0<p<1$.

\subsection{Inequalities for  trigonometric polynomials}

We need the following three important results  for trigonometric polynomials in $L_p$.
The first one is the Nikolskii--Stechkin type inequality (see~\cite{DHI} for the case $\a\in \N$ and~\cite{K07} for the case $\a>0$).

\begin{lemma}\label{lem1}
Let $0<p<1$, $n\in\N$, $0<h\le \pi/n$, and $\a>0$. Then for any trigonometric polynomial $T_n \in \mathcal{T}_n$, we have
\begin{equation*}
  h^\a\Vert T_n^{(\a)}\Vert_p\asymp \Vert \D_h^\a T_n\Vert_p,
\end{equation*}
where $\asymp$ is a two-sided inequality with absolute constants independent of $T_n$ and $h$.
Moreover, if $\a\in \N\cup (1/p-1,\infty)$ and $T_n$ is a polynomial of the best approximation of $f\in L_p(\T)$, then
\begin{equation*}
\Vert \D_h^\a T_n\Vert_p\le C\w_\a\(f,\frac1n\)_p,
\end{equation*}
where $C$ is a constant independent of $T_n$, $h$, and $f$.
\end{lemma}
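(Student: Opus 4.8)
The statement has two parts: first the two-sided equivalence $h^\a\Vert T_n^{(\a)}\Vert_p\asymp\Vert\D_h^\a T_n\Vert_p$ for $0<h\le\pi/n$, and then the one-sided bound $\Vert\D_h^\a T_n\Vert_p\le C\w_\a(f,1/n)_p$ when $T_n$ is a polynomial of best approximation. For the first part, the plan is to use the standard Fourier-multiplier technique adapted to $L_p$, $p<1$. Write $\D_h^\a T_n(x)=\sum_{|k|\le n}(1-e^{-ikh})^\a\widehat{(T_n)}_k e^{ikx}$ and $T_n^{(\a)}(x)=\sum_{|k|\le n}(ik)^\a\widehat{(T_n)}_k e^{ikx}$; then formally $h^\a T_n^{(\a)}=\L(\D_h^\a T_n)$, where $\L$ is the operator with multiplier $m_k=(ikh)^\a/(1-e^{-ikh})^\a$ for $1\le|k|\le n$ (and the reverse identity with multiplier $1/m_k$). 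The key point is that on the range $|kh|\le\pi$ the function $\zeta\mapsto((i\zeta)/(1-e^{-i\zeta}))^{\pm\a}$ and its derivatives are bounded uniformly, so by the standard lemma on Fourier multipliers of polynomials in $L_p$ — a $\ge n$th-order smoothness condition on the symbol, which here is built from $\vp(kh)$ with $\vp$ $C^\infty$ near $[-\pi,\pi]\setminus\{0\}$ and with the right limiting behaviour at $0$ (value $1$, all derivatives controlled) — one gets $\Vert\L T\Vert_p\le C(\a,p)\Vert T\Vert_p$ for all $T\in\mathcal{T}_n$, with $C$ independent of $n$ and $h$. Applying this to both $\L$ and its inverse-multiplier counterpart gives the equivalence.

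For the second part, assume $\a\in\N\cup(1/p-1,\infty)$ and let $T_n$ realize $E_n(f)_p$. The plan is to split $\D_h^\a T_n=\D_h^\a f-\D_h^\a(f-T_n)$ and use the $p$-triangle inequality: $\Vert\D_h^\a T_n\Vert_p^p\le\Vert\D_h^\a f\Vert_p^p+\Vert\D_h^\a(f-T_n)\Vert_p^p$. The first term is $\le\w_\a(f,h)_p^p\le\w_\a(f,1/n)_p^p$ since $h\le\pi/n$ and the modulus is monotone (and one can absorb the factor $\pi$ using~\eqref{eqM2}, as $\a>1/p-1$ or $\a\in\N$ is exactly the admissible range there). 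For the second term, apply~\eqref{restr}: since $\a\in\N\cup(1/p-1,\infty)$, $\Vert\D_h^\a(f-T_n)\Vert_p^p\le C(\a,p)\Vert f-T_n\Vert_p^p=C(\a,p)E_n(f)_p^p$, and finally $E_n(f)_p\le C\w_\a(f,1/n)_p$ by the Jackson inequality~\eqref{eqJ} of Proposition~\ref{propCl}. Combining the two estimates yields $\Vert\D_h^\a T_n\Vert_p\le C\w_\a(f,1/n)_p$.

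The main obstacle is the quantitative multiplier estimate behind the first part: in $L_p$ with $p<1$ there is no Marcinkiewicz-type theorem, and one must instead invoke (or reprove) the fact that if a symbol $\{m_k\}_{|k|\le n}$ arises as $m_k=\psi(k/n)$ — or here $\psi(kh)$ — for a function $\psi$ that is sufficiently smooth (roughly $\lceil 1/p\rceil+1$ derivatives, or an $L_2$-Sobolev condition of that order) on a neighbourhood of the relevant compact set, then the associated convolution operator is bounded on $\mathcal{T}_n$ in $L_p$ with constant independent of $n$. This is a known result (it underlies the references~\cite{DHI} and~\cite{K07} cited right before the lemma), so in the write-up I would state it as a cited auxiliary fact and merely verify that $\zeta\mapsto(i\zeta/(1-e^{-i\zeta}))^{\pm\a}$ satisfies the required smoothness on $\{0<|\zeta|\le\pi\}$ with uniform bounds — in particular that the apparent singularity at $\zeta=0$ is removable with all derivatives bounded — which is an elementary computation with the power series of $(1-e^{-i\zeta})/\zeta$. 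Everything else (monotonicity of the modulus,~\eqref{eqM2},~\eqref{restr}, Jackson) is quoted directly from the material already in the paper.
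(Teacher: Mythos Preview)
The paper does not give its own proof of this lemma: it is stated as an auxiliary result, with the first equivalence attributed to~\cite{DHI} (integer $\a$) and~\cite{K07} (general $\a>0$), and the ``moreover'' part presumably regarded as a routine consequence. So there is nothing in the paper to compare against line by line.

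Your plan is correct and is essentially the standard argument. For the first part, the Fourier-multiplier approach is exactly how the Nikolskii--Stechkin inequality is proved in the cited references: writing $h^\a T_n^{(\a)}$ and $\D_h^\a T_n$ as multiplier operators applied to one another with symbol $\psi(kh)$, $\psi(\zeta)=\big(i\zeta/(1-e^{-i\zeta})\big)^{\pm\a}$, noting that $\psi$ extends to a $C^\infty$ function on $[-\pi,\pi]$ with $\psi(0)=1$ (the zero of numerator and denominator cancel), and then invoking the $L_p$-multiplier lemma for trigonometric polynomials (which in this paper's language is encoded in Lemmas~\ref{lemBL} and~\ref{lemFurp}). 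Your remark that the constant $k=0$ term is harmless and that the relevant symbol has uniform $C^s$ bounds on $|\zeta|\le\pi$ independent of $n,h$ (because $nh\le\pi$) is the right checkpoint.

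For the second part your splitting $\D_h^\a T_n=\D_h^\a f-\D_h^\a(f-T_n)$ followed by~\eqref{restr} and the Jackson inequality~\eqref{eqJ} is clean and correct; the restriction $\a\in\N\cup(1/p-1,\infty)$ is used exactly where you say (for~\eqref{restr}, for~\eqref{eqM2} to absorb the factor~$\pi$, and for~\eqref{eqJ}). This is in fact slightly more direct than the alternative route through the realization equivalence of Lemma~\ref{LemKfunc}, since it avoids any appeal to the Bernstein inequality. Nothing is missing.
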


The second result is the well-known Nikolskii inequality of different metrics (see, e.g.,~\cite[p.~133]{Nik}   and~\cite[Ch. 4, \S~2]{DeLo}).
\begin{lemma}\label{lemNikpq}
{\it Let $0<p<q\le\infty$. Then for any  $T_n\in \mathcal{T}_n$, $n\in \N$, one has
\begin{equation*}
  \Vert T_n \Vert_q\le C n^{\frac1p-\frac1q}\Vert T_n\Vert_p,
\end{equation*}
where $C$ is a constant independent of $T_n$.}
\end{lemma}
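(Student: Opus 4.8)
\medskip

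The plan is to reduce the general case to the basic inequality $\Vert T_n\Vert_\infty \le C n^{1/p}\Vert T_n\Vert_p$ together with interpolation. First I would handle the endpoint $q=\infty$: for a fixed $T_n\in\mathcal{T}_n$ pick $x_0$ with $|T_n(x_0)|=\Vert T_n\Vert_\infty$. Since $T_n^p$ (after writing $|T_n|^p = |T_n^2|^{p/2}$, with $T_n^2$ a polynomial of degree $\le 2n$) behaves well, the standard argument is to observe that on an interval of length $\asymp 1/n$ around $x_0$ one has $|T_n(x)| \ge \tfrac12\Vert T_n\Vert_\infty$; this follows from a Bernstein-type bound $\Vert T_n'\Vert_\infty \le C n\Vert T_n\Vert_\infty$, which in turn can be obtained from the classical Bernstein inequality in the uniform norm. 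Integrating $|T_n|^p$ over that interval gives
$$
\Vert T_n\Vert_p^p \;\ge\; \frac{1}{2\pi}\cdot \frac{c}{n}\cdot \Big(\tfrac12\Vert T_n\Vert_\infty\Big)^p,
$$
hence $\Vert T_n\Vert_\infty \le C n^{1/p}\Vert T_n\Vert_p$, which is the claim for $q=\infty$.

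\medskip

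For finite $q$ with $0<p<q<\infty$, I would interpolate between the $L_p$ norm and the $L_\infty$ norm. Writing
$$
\Vert T_n\Vert_q^q \;=\; \frac{1}{2\pi}\int_\T |T_n(x)|^{q-p}\,|T_n(x)|^p\,dx \;\le\; \Vert T_n\Vert_\infty^{\,q-p}\,\frac{1}{2\pi}\int_\T |T_n(x)|^p\,dx \;=\; \Vert T_n\Vert_\infty^{\,q-p}\,\Vert T_n\Vert_p^p,
$$
and then applying the already established bound $\Vert T_n\Vert_\infty \le C n^{1/p}\Vert T_n\Vert_p$ to the first factor, one gets
$$
\Vert T_n\Vert_q^q \;\le\; C^{\,q-p}\, n^{(q-p)/p}\,\Vert T_n\Vert_p^{\,q-p}\,\Vert T_n\Vert_p^p \;=\; C^{\,q-p}\, n^{(q-p)/p}\,\Vert T_n\Vert_p^{\,q}.
$$
Taking $q$-th roots yields $\Vert T_n\Vert_q \le C\, n^{(q-p)/(pq)}\Vert T_n\Vert_p = C\,n^{1/p-1/q}\Vert T_n\Vert_p$, as required, with a constant depending only on $p$ and $q$.

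\medskip

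The main obstacle — really the only nontrivial point — is establishing the $L_\infty$ estimate $\Vert T_n\Vert_\infty \le Cn^{1/p}\Vert T_n\Vert_p$ for $0<p<1$, since for $p<1$ one cannot simply quote the Hausdorff--Young or a convolution argument as in the $p\ge 1$ case. The route through the Bernstein inequality in the uniform norm (to control the oscillation of $T_n$ near its maximum) is the cleanest, but one must be careful that the Bernstein constant is absolute and that the interval on which $|T_n|\ge \tfrac12\Vert T_n\Vert_\infty$ indeed has length comparable to $1/n$ uniformly in $T_n$. Once this is in hand, everything else is the elementary Hölder-type splitting above. Since this lemma is classical and stated with a reference in the paper, in practice one would just cite \cite[p.~133]{Nik} and \cite[Ch.~4,\,\S~2]{DeLo} rather than reproduce the argument.
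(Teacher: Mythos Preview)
The paper does not give its own proof of this lemma: it is stated as a classical result with references to \cite[p.~133]{Nik} and \cite[Ch.~4, \S~2]{DeLo}. Your proposal is correct and is precisely the standard argument found in those references --- first the $q=\infty$ endpoint via the uniform Bernstein inequality (so that $|T_n|\ge \tfrac12\Vert T_n\Vert_\infty$ on an interval of length $\asymp 1/n$), then the elementary splitting $\Vert T_n\Vert_q^q\le \Vert T_n\Vert_\infty^{q-p}\Vert T_n\Vert_p^p$ for $p<q<\infty$. Nothing further is needed; your concern about the $0<p<1$ case is handled exactly by the route you describe, since the Bernstein inequality used is in $L_\infty$ and the integration of $|T_n|^p$ over the length-$1/n$ interval is insensitive to whether $p\ge 1$.
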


The third result is the Bernstein type inequality involving the Weyl fractional derivative  (see~\cite{BL}).

\begin{lemma}\label{lemBLf} {\it Let $0<p<1$. Then
\begin{equation*}
\sup_{T_n\in \mathcal{T}_n,\,\Vert T_n\Vert_p\le 1}\Vert T_n^{(\a)}\Vert_p\asymp \left\{%
\begin{array}{ll}
n^{\a}, & \hbox{$\a\in\mathbb{Z}_+$ or $\a\not\in\mathbb{Z}_+$ and $\a>\frac 1p-1$,} \\
n^{\frac1p-1}\log^\frac 1p n, & \hbox{$\a=\frac 1p-1\not\in\mathbb{Z}_+$,} \\
n^{\frac1p-1}, & \hbox{$\a\not\in\mathbb{Z}_+$ and $\a<\frac 1p-1$,}  \\
\end{array}%
\right.
\end{equation*}
where $\asymp$ is a two-sided inequality with absolute constants independent of $n$.}
\end{lemma}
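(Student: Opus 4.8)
The plan is to identify the quantity in question with the operator quasi-norm $M_{n,p}(\a):=\sup_{T\in\mathcal{T}_n,\,\Vert T\Vert_p\le1}\Vert T^{(\a)}\Vert_p$ and to show that $M_{n,p}(\a)\asymp n^{1/p-1}\Vert K_n^\a\Vert_p$ for a fixed smooth cut-off kernel $K_n^\a$; the three regimes then come out of a single computation of $\Vert K_n^\a\Vert_p$. Concretely, I fix $v\in C^\infty(\R)$ with $v\equiv1$ on $[-1,1]$, $\supp v\subset[-2,2]$, and set $K_n^\a(x)=\sum_{k\ne0}(ik)^\a v(k/n)e^{ikx}$, a polynomial of degree $\le 2n$. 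Since $v(k/n)=1$ for $|k|\le n$, one has $T^{(\a)}=T*K_n^\a$ for every $T\in\mathcal{T}_n$. The analytic heart of the upper bound is the convolution inequality
\begin{equation*}
\Vert S*Q\Vert_p\le Cm^{1/p-1}\Vert S\Vert_p\Vert Q\Vert_p,\qquad S,Q\in\mathcal{T}_m,\ \ 0<p<1,
\end{equation*}
which I would derive from the classical Marcinkiewicz--Zygmund inequality $\Vert P\Vert_p^p\asymp M^{-1}\sum_j|P(2\pi j/M)|^p$ (valid for $P\in\mathcal{T}_m$, $M\asymp m$) combined with the exactness of the equally spaced quadrature rule on $\mathcal{T}_{2m}$: writing $(S*Q)(x_j)=M^{-1}\sum_l S(x_j-y_l)Q(y_l)$ exactly, applying the subadditivity $|\sum_l a_l|^p\le\sum_l|a_l|^p$, summing in $j$ by Fubini, and using Marcinkiewicz--Zygmund a second time on $S$ and on $Q$. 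Taking $S=T$, $Q=K_n^\a$, $m\asymp n$ gives $M_{n,p}(\a)\le Cn^{1/p-1}\Vert K_n^\a\Vert_p$.

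Next I would estimate $\Vert K_n^\a\Vert_p$ by Fourier analysis of the symbol. The scaling $\xi=n\eta$ gives $K_n^\a(x)\asymp n^{\a+1}\psi(nx)$, where $\psi$ is the Fourier transform of $|\eta|^\a v(\eta)$ (up to the rotation factors $e^{\pm i\a\pi/2}$). For $\a\notin\N$ the $|\eta|^\a$-singularity forces the envelope $|\psi(t)|\asymp\min(1,|t|^{-\a-1})$, so that
\begin{equation*}
\Vert K_n^\a\Vert_p\asymp\left(n^{(\a+1)p-1}+\int_{1/n}^{\pi}t^{-(\a+1)p}\,dt\right)^{1/p}\asymp
\begin{cases}
n^{\a+1-\frac1p}, & \a>\frac1p-1,\\
(\log n)^{1/p}, & \a=\frac1p-1\notin\N,\\
1, & \a<\frac1p-1,\ \a\notin\N,
\end{cases}
\end{equation*}
the three cases according to whether the peak on $|x|\lesssim1/n$ or the tail dominates, with the logarithm as the boundary value $(\a+1)p=1$. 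For $\a\in\N$ the symbol is smooth, $\psi$ is rapidly decreasing, the peak always dominates, and one lands in the first case; thus no separate integer Bernstein argument is needed. Multiplying by $n^{1/p-1}$ reproduces exactly the three upper bounds of the Lemma.

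For the matching lower bound I would test on a Jackson-type kernel $J_n$ of order $s>1/(2p)$ — a nonnegative polynomial spike of degree $\asymp n$ whose Fourier coefficients have the form $\Phi(k/n)$ for a compactly supported bump $\Phi$ with $\Phi(0)=1$ that is as smooth as $s$ is large. Since $s$ is large, the tail of $J_n$ is negligible in $L_p$ and $\Vert J_n\Vert_p\asymp n^{1-1/p}$, while $J_n^{(\a)}=\sum_{k\ne0}(ik)^\a\Phi(k/n)e^{ikx}$ carries the same symbol-singularity as $K_n^\a$, whence $\Vert J_n^{(\a)}\Vert_p\asymp\Vert K_n^\a\Vert_p$ in all three regimes. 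Therefore $M_{n,p}(\a)\ge\Vert J_n^{(\a)}\Vert_p/\Vert J_n\Vert_p\asymp n^{1/p-1}\Vert K_n^\a\Vert_p$, matching the upper bound throughout; as a sanity check, the single exponential $T=e^{inx}$ already gives $\Vert T^{(\a)}\Vert_p/\Vert T\Vert_p=n^\a$, which by itself settles the first regime.

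The step I expect to be the main obstacle is the sharp \emph{two-sided} kernel estimate, uniformly in $p\in(0,1)$. The upper estimates on $\Vert K_n^\a\Vert_p$ follow directly from the envelope, but the lower estimates on $\Vert K_n^\a\Vert_p$ and $\Vert J_n^{(\a)}\Vert_p$ require integrating $|\psi|^p$ against its genuine oscillatory behaviour, i.e.\ controlling the mass of $\psi$ between its zeros so that the tail contribution is not lost. Equally delicate is isolating the critical case $\a=\frac1p-1$ to produce precisely the power $(\log n)^{1/p}$. The remaining technical points are routine by comparison: checking that the Jackson order $s$ can be enlarged while keeping $\deg J_n\asymp n$ and $\Phi$ smooth enough for the asymptotics $K_n^\a(x)\asymp n^{\a+1}\psi(nx)$, and verifying the Marcinkiewicz--Zygmund and quadrature facts underlying the convolution inequality.
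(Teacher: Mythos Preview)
The paper does not prove this lemma at all: it is quoted verbatim from Belinsky--Liflyand~\cite{BL} (see the sentence preceding the lemma, ``The third result is the Bernstein type inequality involving the Weyl fractional derivative (see~\cite{BL})''), and is used throughout as a black box. So there is no ``paper's own proof'' to compare against.

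That said, your outline is essentially the Belinsky--Liflyand argument. The reduction $M_{n,p}(\a)\le Cn^{1/p-1}\Vert K_n^\a\Vert_p$ via the $L_p$--convolution inequality for polynomials is exactly the mechanism they use (this inequality, incidentally, is also how the paper handles \eqref{eq777} in the proof of Lemma~\ref{lem1w}, via the Nikol'skii inequality rather than Marcinkiewicz--Zygmund, but the two routes are equivalent here). The three-case computation of $\Vert K_n^\a\Vert_p$ from the tail decay $|t|^{-\a-1}$ of the transform of $|\eta|^\a v(\eta)$ is also standard; the paper's Lemma~\ref{lemBL} is precisely the tool that converts such kernel norms to Fourier-side estimates, so you are in good company. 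Your identification of the two-sided kernel estimate as the crux is accurate: the upper envelope $\min(1,|t|^{-\a-1})$ is immediate from integration by parts and the singularity expansion, but the matching lower bound on $\int|\psi|^p$ genuinely requires locating the mass of $\psi$ between its zeros (e.g.\ by extracting the leading homogeneous term $c_\a|t|^{-\a-1}$ and showing the remainder is $O(|t|^{-\a-2})$, which is enough since $c_\a\ne0$ exactly when $\a\notin\Z_+$). One small point: for the lower bound you do not need the full $\Vert J_n^{(\a)}\Vert_p\asymp\Vert K_n^\a\Vert_p$; it suffices to exhibit \emph{any} polynomial realising each rate, and in the subcritical regime $\a<1/p-1$ the Dirichlet kernel itself (or any fixed positive combination of low frequencies) already gives $\Vert T_n^{(\a)}\Vert_p\gtrsim 1$ with $\Vert T_n\Vert_p\asymp n^{1-1/p}$, which slightly simplifies that case.
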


\subsection{Approximation of a function and its derivatives}

In the spaces $L_p$ with  $p\ge 1$, the following fact is well-known:
{\it if a sequence of functions
$\{\varphi_n\}_{n=1}^\infty\subset L_p$ is such that
$\varphi_n^{(r-1)}\in AC$, $n\in \mathbb{N}$, and for some $f,g\in L_p$ one has
$$
\Vert f -\varphi_n\Vert_{L_p}+\Vert g
-\varphi_n^{(r)}\Vert_{L_p}\to 0\quad\text{as}\quad n\to\infty,
$$
then (in the sense of distribution) $g=f^{(r)}$} (see~\cite[Ch. 4]{Nik}).

In the case $0<p<1$, this result does not valid in general. In particular, for $f_0(x)=x$ there exists a sequence of functions  $\varphi_n \in AC[0,1]$, $n\in\N$, such that
$\varphi_n \to f_0$ as $n\to\infty$ in $L_p[0,1]$, but $\Vert
\varphi_n'\Vert_{L_p[0,1]}\to 0$ as $n\to\infty$ (see~\cite{DiTi07}). This is an undesirable property of the spaces $L_p$, $0<p<1$.
However, as it is shown in Lemma~\ref{lem1w} below, under certain additional restrictions on $f$ and $\vp_n$, this feature can be fixed (see also~\cite{DiTi07}, in which the case of the derivatives of integer order was considered).

To prove the main result of this subsection (see Lemma~\ref{lem1w}),  we need the following two lemmas. As usual, the Fourier transform of a function $f\in L_1(\mathbb{R})$ is denoted by
$$
\widehat{f}(y)=\frac1{\sqrt{2\pi}}\int_{\mathbb{R}}f(x)e^{-iyx}dx.
$$

\begin{lemma}\label{lemBL} {\sc (See~\cite[4.1.1]{TB}).}
{\it Let $0<p\le 1$, a function $\phi\in C(\R)$ have a compact support, and  $\widehat{\phi}\in L_p(\R)$. Then
$$
\sup_{h>0} h^{1-\frac1p}\Vert
\Phi_h\Vert_{L_p(\mathbb{T})}=\sqrt{2\pi}\Vert\widehat{\phi}\Vert_{L_p(\mathbb{R})},
$$
where
$$
\Phi_h(x)=\sum_{k=-\infty}^\infty \phi\left(h k\right)e^{ikx}.
$$}
\end{lemma}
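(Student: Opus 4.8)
\textbf{Proof plan for Lemma~\ref{lemBL}.}

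The plan is to reduce the identity to the classical Poisson summation formula and then to an explicit $L_p$ computation. First I would observe that since $\phi\in C(\R)$ has compact support, the sum defining $\Phi_h(x)=\sum_k \phi(hk)e^{ikx}$ is in fact a finite sum for each fixed $h>0$, so $\Phi_h$ is a genuine trigonometric polynomial (in particular an element of $L_p(\T)$ for every $p$), and there are no convergence issues to worry about on the left-hand side. The key structural fact is that $\Phi_h$ is, up to normalization, the $h$-periodization of $\widehat\phi$: by the Poisson summation formula applied to the (continuous, compactly supported, hence Schwartz-class-like) function $\phi$, one has
$$
\sum_{k=-\infty}^\infty \phi(hk)e^{ikx}=\frac{\sqrt{2\pi}}{h}\sum_{j=-\infty}^\infty \widehat{\phi}\!\left(\frac{x+2\pi j}{h}\right),
$$
with the Fourier transform normalized as in the statement. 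One must check the hypotheses for Poisson summation to be valid pointwise here; compact support of $\phi$ plus $\widehat\phi\in L_p(\R)\subset L_1(\R)$ (for $p\le 1$, $L_p\cap(\text{decay})$ arguments, or simply $\widehat\phi$ continuous and $L_p$) should suffice, and in any case the right choice is to invoke the reference \cite{TB} or \cite{BL} for this periodization identity.

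Granting the periodization formula, the computation of the $L_p(\T)$ norm is direct. Using that $\|f\|_p^p=\frac1{2\pi}\int_\T|f|^p$, and substituting $u=(x+2\pi j)/h$ in each term, the translates $\widehat\phi((x+2\pi j)/h)$ for $j\in\Z$ tile $\R$ as $x$ ranges over $\T$: as $x$ runs through $[0,2\pi)$ and $j$ runs through $\Z$, the point $(x+2\pi j)/h$ runs through all of $\R$, each value attained once. Hence
$$
\|\Phi_h\|_{L_p(\T)}^p=\frac1{2\pi}\int_0^{2\pi}\Big|\frac{\sqrt{2\pi}}{h}\sum_{j}\widehat\phi\Big(\tfrac{x+2\pi j}{h}\Big)\Big|^p dx.
$$
Here is where $p\le 1$ is essential: the supports of the functions $x\mapsto \widehat\phi((x+2\pi j)/h)$ need not be disjoint, so one cannot simply pull the sum out of $|\cdot|^p$ by additivity. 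The resolution is a two-sided estimate combined with the supremum over $h$: the inequality $\big|\sum_j a_j\big|^p\le \sum_j |a_j|^p$ gives the upper bound $h^{1-1/p}\|\Phi_h\|_{L_p(\T)}\le \sqrt{2\pi}\,\|\widehat\phi\|_{L_p(\R)}$ after the change of variables and the tiling, for \emph{every} $h>0$; for the reverse, one lets $h\to 0$, so that the rescaled translates spread apart and (since $\widehat\phi$ is, say, continuous with the bulk of its mass in a bounded set, or by a density/truncation argument) the overlap becomes asymptotically negligible on most of the relevant region, forcing $h^{1-1/p}\|\Phi_h\|_{L_p(\T)}\to\sqrt{2\pi}\,\|\widehat\phi\|_{L_p(\R)}$. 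Combining the uniform upper bound with the limiting lower bound yields that the supremum over $h>0$ equals $\sqrt{2\pi}\,\|\widehat\phi\|_{L_p(\R)}$ exactly.

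The main obstacle is precisely this last step — extracting the \emph{sharp} constant (an equality, not just $\asymp$) in the $p<1$ regime where the quasi-norm is not additive over disjoint-ish supports. The clean way to handle it is the $h\to 0$ asymptotics: rescale by setting $y=x/h$ so that the period becomes $2\pi/h\to\infty$, and show that $\int_{-\pi/h}^{\pi/h}|\sum_j \widehat\phi(y+2\pi j/h)|^p\,h\,dy$ converges to $\int_\R|\widehat\phi(y)|^p\,dy$; on any fixed bounded window the cross terms vanish in the limit because the other translates are pushed to $|y|\gtrsim 1/h$, and the tail contribution is controlled by $\|\widehat\phi\|_{L_p}$ being finite. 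Since this computation and its justification are exactly the content of \cite[4.1.1]{TB}, in the write-up I would state the periodization identity, perform the change of variables and tiling to get the uniform upper bound, and then cite the reference (or sketch the $h\to0$ argument) for the matching lower bound, concluding that the supremum is attained in the limit and equals $\sqrt{2\pi}\,\|\widehat\phi\|_{L_p(\R)}$.
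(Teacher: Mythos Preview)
The paper does not prove this lemma at all; it is simply quoted from the reference \cite[4.1.1]{TB}. Your plan --- Poisson summation to realize $\Phi_h$ as the $h$-periodization $\frac{\sqrt{2\pi}}{h}\sum_j\widehat\phi((x+2\pi j)/h)$, the $p$-subadditivity inequality $|\sum a_j|^p\le\sum|a_j|^p$ for the uniform upper bound over all $h>0$, and the $h\to 0$ limit (where the translates separate) for the matching lower bound --- is the standard and correct route to this identity, and is essentially how it is proved in \cite{TB}.

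One small correction: the parenthetical ``$\widehat\phi\in L_p(\R)\subset L_1(\R)$ for $p\le 1$'' is wrong as stated, since there is no inclusion $L_p(\R)\subset L_1(\R)$ for $p<1$. What actually gives $\widehat\phi\in L_1(\R)$ here is that $\widehat\phi$ is bounded (because $\phi\in L_1$) combined with $\widehat\phi\in L_p$, via $\int|\widehat\phi|\le\|\widehat\phi\|_\infty^{1-p}\int|\widehat\phi|^p<\infty$. With that fix the hypotheses for Poisson summation are met and the rest of your outline goes through.
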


In the case $p=1$, the next lemma can be found in~\cite{K14}; the general case see in~\cite{K12}.

\begin{lemma}\label{lemFurp} {\it
    Let $0<p\le 1$, $1<q<\infty$, $1<r<\infty$, $s>1/p-1+1/r$,
    $s\in\N$, let a function $f$ be such that $f\in C(\R)\cap L_1(\R)$, $\lim_{|x|\to \infty}f(x)=0$, and $\widehat{f}\in L_1(\R)$. Suppose also that $f\in L_q(\R)$, $f^{(s)}\in L_r(\R)$, and
    $$
\frac{1-\t}q+\frac{\t}r>\frac12,\quad
\t=\frac1s\left(\frac1p-\frac12\right).
    $$
    Then
    $$
\Vert \widehat{f}\Vert_{L_p(\R)}\le C\Vert
f\Vert_{L_q(\R)}^{1-\t}\Vert f^{(s)}\Vert_{L_r(\R)}^{\t},
    $$
where $C$ is a constant independent of $f$.}
\end{lemma}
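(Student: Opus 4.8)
The plan is to bound $\Vert\widehat f\Vert_{L_p(\R)}$ by splitting the integral $\int_\R |\widehat f(y)|^p\,dy$ over $|y|\le N$ and $|y|>N$ for a parameter $N>0$ to be optimized, and to estimate each piece by H\"older's inequality together with the classical Hausdorff--Young and Pitt-type inequalities available for $q,r>1$. On the low-frequency part I would write $\int_{|y|\le N}|\widehat f(y)|^p\,dy \le (2N)^{1-p/q'}\bigl(\int_{|y|\le N}|\widehat f(y)|^{q'}\,dy\bigr)^{p/q'}$ by H\"older with exponents $q'/p$ and its conjugate, and then control $\Vert\widehat f\Vert_{L_{q'}(\R)}$ by $\Vert f\Vert_{L_q(\R)}$ via Hausdorff--Young (here $1<q<\infty$ is needed, but note $q'$ may be large; one only needs $q'\ge 2$, i.e. $q\le 2$, so a preliminary reduction or an interpolation step may be required if $q>2$ — see below). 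On the high-frequency part, the factor $|y|^{-s}$ coming from $\widehat{f^{(s)}}(y)=(iy)^s\widehat f(y)$ supplies decay: $\int_{|y|>N}|\widehat f(y)|^p\,dy = \int_{|y|>N}|y|^{-sp}|\widehat{f^{(s)}}(y)|^p\,dy$, and another application of H\"older (exponents $r'/p$ and its conjugate) bounds this by a constant times $N^{-sp+1-p/r'}\Vert f^{(s)}\Vert_{L_r(\R)}^p$, using Hausdorff--Young again for $\widehat{f^{(s)}}$.

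Once both pieces are in hand one has, schematically,
\begin{equation*}
\Vert\widehat f\Vert_{L_p(\R)}^p \le C\Bigl(N^{1-\frac{p}{q'}}\Vert f\Vert_{L_q(\R)}^p + N^{1-sp-\frac{p}{r'}}\Vert f^{(s)}\Vert_{L_r(\R)}^p\Bigr),
\end{equation*}
and I would choose $N$ to balance the two terms, namely $N^{sp+\frac{p}{r'}-\frac{p}{q'}} \asymp \Vert f\Vert_{L_q(\R)}^p/\Vert f^{(s)}\Vert_{L_r(\R)}^p$. Substituting this optimal $N$ back produces a bound of the form $\Vert f\Vert_{L_q(\R)}^{1-\t}\Vert f^{(s)}\Vert_{L_r(\R)}^{\t}$ with $\t$ determined by the exponent arithmetic; one then checks that the resulting $\t$ equals $\frac1s(\frac1p-\frac12)$ exactly when the side condition $\frac{1-\t}{q}+\frac{\t}{r}>\frac12$ holds, which is precisely the hypothesis ensuring that the intermediate Lebesgue exponents stay in the Hausdorff--Young range $[1,2]$ on both pieces. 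The regularity and decay assumptions ($f\in C(\R)\cap L_1$, $\widehat f\in L_1$, $f(x)\to 0$, $f^{(s)}\in L_r$) guarantee that all integrals are finite and that the formal manipulation $\widehat{f^{(s)}}=(iy)^s\widehat f$ is legitimate.

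The main obstacle I anticipate is keeping all the auxiliary Lebesgue exponents inside the range where the Hausdorff--Young inequality is valid. In the crude splitting above I used Hausdorff--Young for $\widehat f$ at exponent $q'$ and for $\widehat{f^{(s)}}$ at exponent $r'$, which forces $q\le 2$ and $r\le 2$; to cover the full stated range $1<q,r<\infty$ one should instead interpolate more carefully — e.g. apply H\"older so that $\widehat f$ is measured in $L_2$ (where Plancherel gives $\Vert\widehat f\Vert_2=\Vert f\Vert_2$, and $\Vert f\Vert_2$ is then interpolated between $\Vert f\Vert_q$ and $\Vert f^{(s)}\Vert_r$ by a Gagliardo--Nirenberg inequality), which is exactly what the condition $\t=\frac1s(\frac1p-\frac12)$ and the inequality $\frac{1-\t}{q}+\frac{\t}{r}>\frac12$ encode. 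Thus the real work is: (i) reduce to controlling $\Vert\widehat f\Vert_{L_p}$ by $\Vert\widehat f\Vert_{L_2}$ plus a tail, (ii) estimate the tail using the $s$-th derivative, and (iii) invoke Gagliardo--Nirenberg to pass from $\Vert f\Vert_2$ to the mixed norm $\Vert f\Vert_q^{1-\t}\Vert f^{(s)}\Vert_r^{\t}$, tracking constants to see they do not depend on $f$. The case $p=1$ reduces to $\t=\frac1{2s}$ and recovers the known result of~\cite{K14}.
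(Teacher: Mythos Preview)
The paper does not prove this lemma: immediately before the statement it says ``In the case $p=1$, the next lemma can be found in~\cite{K14}; the general case see in~\cite{K12}'', and no argument is given. So there is no proof in the present paper against which to compare your plan; you would have to consult~\cite{K12} for the actual argument.

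On the plan itself, the low/high frequency split followed by optimisation in $N$ is the standard route to Carlson--Beurling type bounds, but your exponent bookkeeping does not close. If you carry out the split exactly as in your first display (Hausdorff--Young with exponents $q$ and $r$ on the two pieces) and balance the two terms, the power of $\Vert f^{(s)}\Vert_r$ you obtain is
\[
\theta_{q,r}=\frac{1/p-1+1/q}{\,s+1/q-1/r\,},
\]
and a one--line dilation check ($f\mapsto f(\lambda\,\cdot)$) shows that this is the \emph{only} value of $\theta$ for which an inequality of the form $\Vert\widehat f\Vert_p\le C\Vert f\Vert_q^{1-\theta}\Vert f^{(s)}\Vert_r^{\theta}$ can hold with $C$ independent of $f$. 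One has $\theta_{q,r}=\tfrac1s(\tfrac1p-\tfrac12)$ precisely on the hypersurface $(1-\theta)/q+\theta/r=\tfrac12$, i.e.\ at the \emph{boundary} of the stated side condition, not throughout the open region; so your sentence ``one then checks that the resulting $\theta$ equals $\tfrac1s(\tfrac1p-\tfrac12)$ exactly when the side condition holds'' is not correct. Your proposed refinement via Plancherel and Gagliardo--Nirenberg has the same difficulty from the other direction: Plancherel on both pieces does produce the exponent $\tfrac1s(\tfrac1p-\tfrac12)$, but the resulting bound is $\Vert f\Vert_2^{1-\theta}\Vert f^{(s)}\Vert_2^{\theta}$, and Gagliardo--Nirenberg will not convert $\Vert f^{(s)}\Vert_2$ into a combination of $\Vert f\Vert_q$ and $\Vert f^{(s)}\Vert_r$ when $r>2$.

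For what the paper actually needs (the estimate $\Vert\widehat{\eta_{\a,\e}}\Vert_{L_p(\R)}\le C\e^{\gamma}$ with some $\gamma>0$ in the proof of Lemma~\ref{lem1w}), your first split with the scale--invariant $\theta_{q,r}$ already suffices once $q,r\in(1,2)$ are chosen; the precise value $\tfrac1s(\tfrac1p-\tfrac12)$ plays no role there. For the lemma in the exact form stated you should look at the formulation in~\cite{K12} rather than try to reverse--engineer it from the exponent $\theta$ given here.
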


Now, we are ready to formulate and prove a key result for obtaining Theorems~\ref{thsec3.A} and~\ref{thModFrD}.

\begin{lemma}\label{lem1w}
{\it     Let $f\in L_p(\T)$, $0<p<1$,
$\a\in\N\cup(1/p-1,\infty)$, and
    $T_n\in \mathcal{T}_n$, $n\in \N$, be such that
    \begin{equation*}
        \Vert f-T_n\Vert_p=o\left(\frac1{n^{\a}}\right)\quad\text{and}\quad
        \Vert g-T_n^{(\a)}\Vert_p=o(1)\quad\text{as}\quad n\to\infty.
    \end{equation*}
Then $f^{(\a)}=g$, i.e. $g$ satisfies
(\ref{eqProizvLp}).}
\end{lemma}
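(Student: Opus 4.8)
The plan is to show that the ``candidate derivative'' $g$ agrees with the Weyl derivative $f^{(\a)}$ by comparing their Fourier coefficients, and then to invoke the characterization of the $L_p$-derivative as the unique $L_p$-limit of $\D_h^\a f/h^\a$ (when it exists). First I would establish that $g\in L_p(\T)$ has Fourier coefficients $\widehat{g}_k=(ik)^\a\widehat{f}_k$ for all $k\in\Z\setminus\{0\}$: since $\|f-T_n\|_p\to 0$ and $\|g-T_n^{(\a)}\|_p\to 0$, passing to the Fourier side on the finite-dimensional spaces $\mathcal{T}_n$ (where all quasi-norms are comparable) shows that for each fixed $k$ the $k$-th Fourier coefficient of $T_n$ tends to $\widehat{f}_k$ and the $k$-th coefficient of $T_n^{(\a)}$, namely $(ik)^\a\widehat{(T_n)}_k$, tends to $\widehat{g}_k$; hence $\widehat{g}_k=(ik)^\a\widehat{f}_k$. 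Thus $g$ is (a representative of) the Weyl derivative $f^{(\a)}$ in the distributional sense, and in particular $f^{(\a)}\in L_p(\T)$.

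Next I would reduce the convergence $\|\D_h^\a f/h^\a-g\|_p\to 0$ to the corresponding statement for the polynomials $T_n$. Fix $h>0$ small and choose $n=[1/h]$. Write
$$
\frac{\D_h^\a f}{h^\a}-g=\frac{\D_h^\a(f-T_n)}{h^\a}+\Big(\frac{\D_h^\a T_n}{h^\a}-T_n^{(\a)}\Big)+\big(T_n^{(\a)}-g\big).
$$
The third term tends to $0$ in $L_p$ by hypothesis. For the second term, by the Nikolskii--Stechkin inequality (Lemma~\ref{lem1}, applicable since $0<h\le\pi/n$ up to an absolute constant, and $\a>0$) we have $\|\D_h^\a T_n/h^\a-T_n^{(\a)}\|_p\le C\|T_n^{(\a)}\|_p\cdot o(1)$ — more precisely one uses that $h^\a\|T_n^{(\a)}\|_p\asymp\|\D_h^\a T_n\|_p$ together with a telescoping/smallness argument as $h\to 0$ with $nh\to 1$; since $\|T_n^{(\a)}-g\|_p\to 0$, the norms $\|T_n^{(\a)}\|_p$ stay bounded, so this term is $o(1)$. (Alternatively, and more cleanly, since $g=f^{(\a)}$ in the Weyl sense and $T_n^{(\a)}\to g$, one may replace $T_n$ by a polynomial of near-best approximation to $g$ and use the Jackson inequality~\eqref{eqJ} for $g$.)

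The crux — and the step I expect to be the main obstacle — is the first term $\|\D_h^\a(f-T_n)\|_p/h^\a$. Here one cannot simply use $\|\D_h^\a F\|_p\le C\|F\|_p$ and $\|f-T_n\|_p=o(n^{-\a})$, because that only gives $o(1)$ after dividing by $h^\a\asymp n^{-\a}$, which is not enough; we need genuine smallness. The key idea is to exploit that $F:=f-T_n$ has ``no low frequencies below $n$'' only in a loose sense, so instead one should sum dyadically: writing $f-T_n=\sum_{j\ge 0}(T_{2^{j+1}n}-T_{2^j n})$ (with $T_{2^0 n}=T_n$) and applying to each block $U_j:=T_{2^{j+1}n}-T_{2^j n}\in\mathcal{T}_{2^{j+1}n}$ the elementary bound $\|\D_h^\a U_j\|_p\le C\min\big(1,(2^j nh)^{\a p}\big)^{1/p}\|U_j\|_p$ (small frequencies contribute $h^\a\|U_j^{(\a)}\|_p\le C(2^jnh)^\a\|U_j\|_p$ via Bernstein/Nikolskii--Stechkin, large frequencies just the trivial bound), and using $\|U_j\|_p^p\le 2\|f-T_{2^j n}\|_p^p+2\|f-T_{2^{j+1}n}\|_p^p=o(2^{-j\a p}n^{-\a p})$, one gets after dividing by $h^{\a p}\asymp n^{-\a p}$ a convergent series whose tail is $o(1)$. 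This is exactly where the restriction $\a\in\N\cup(1/p-1,\infty)$ enters, guaranteeing via~\eqref{restr} and Lemma~\ref{lem1} that all the polynomial operations above are bounded on $L_p$. Combining the three estimates gives $\|\D_h^\a f/h^\a-g\|_p\to 0$, i.e.\ $f^{(\a)}=g$ in the sense of~\eqref{eqProizvLp}.
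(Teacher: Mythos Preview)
Your three-term decomposition matches the paper's, but you have misidentified which term is the crux. The first term $\Vert\D_h^\a(f-T_n)\Vert_p/h^\a$ is in fact easy: by~\eqref{restr} (this is exactly where $\a\in\N\cup(1/p-1,\infty)$ is used) one has $\Vert\D_h^\a(f-T_n)\Vert_p\le C\Vert f-T_n\Vert_p=o(n^{-\a})$, and with $n=[1/h]$ this already gives $o(1)$ after dividing by $h^\a$; your dyadic argument is unnecessary. The genuine difficulty is the \emph{second} term $\Vert\D_h^\a T_n/h^\a-T_n^{(\a)}\Vert_p$, and your treatment of it has a real gap. The Nikol'skii--Stechkin equivalence $h^\a\Vert T_n^{(\a)}\Vert_p\asymp\Vert\D_h^\a T_n\Vert_p$ says the two quantities are comparable, not that their difference is small; with $nh\asymp 1$ it is \emph{not} small. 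Take $T_n(x)=e^{inx}$: then $\D_h^\a T_n/h^\a-T_n^{(\a)}=\big(((1-e^{-inh})/(inh))^\a-1\big)(in)^\a e^{inx}$, and for $nh\asymp 1$ the bracket is bounded away from~$0$, so this term is of order $\Vert T_n^{(\a)}\Vert_p$, not $o(\Vert T_n^{(\a)}\Vert_p)$. Your ``alternative'' is circular: it presupposes $g=f^{(\a)}$, which is the conclusion.

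The paper's fix is to couple $n$ and $h$ differently: for a given small $\e$ one takes $h$ in the range $[\e^\l/n,\,2\e^\l/n]$ with $0<\l<1/\a$, so that $nh\sim\e^\l$ is \emph{small}. Then $J_1\le C\e^{(1-\l\a)p}$ is still small, and for $J_2$ one writes $\D_h^\a T_n/h^\a-T_n^{(\a)}=K_{h,\a}\ast T_n^{(\a)}$, where $K_{h,\a}$ has Fourier multiplier $(((1-e^{-ix})/(ix))^\a-1)v(x/(2\e^\l))$ supported on $|x|\le 4\e^\l$; a Nikol'skii different-metric inequality together with the quantitative Fourier-side Lemmas~\ref{lemBL} and~\ref{lemFurp} then gives $n^{1/p-1}\Vert K_{h,\a}\Vert_p\le C\e^\g$ for some $\g>0$, whence $J_2\le C\e^{\g p}\Vert T_n^{(\a)}\Vert_p^p\le C\e^{\g p}(\e^p+\Vert g\Vert_p^p)$. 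This multiplier estimate is the heart of the lemma and is missing from your sketch. (A side remark: your opening Fourier-coefficient paragraph is both unnecessary and problematic, since $f\in L_p(\T)$ with $0<p<1$ need not lie in $L_1(\T)$, so $\widehat{f}_k$ is not defined; the goal is only to verify~\eqref{eqProizvLp}, not to match Weyl coefficients.)
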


\begin{proof}
For any sufficiently small $\e>0$, we choose $n_0=n_0(\e)$
such that for any $n\ge n_0$ one has
    \begin{equation}\label{eqlemditi2}
        \Vert f-T_n\Vert_p\le\frac\e{n^{\a}}\quad\text{and}\quad
        \Vert g-T_n^{(\a)}\Vert_p\le \e.
    \end{equation}

Let $h$ be such that ${\e^\l}{n^{-1}}\le
h\le{2\e^\l}{n^{-1}}$, where $0<\l<\a^{-1}$. We have
\begin{equation}\label{eq.lem.th2.3}
\begin{split}
\bigg\Vert \frac{\D_h^\a f}{h^\a}-g\bigg\Vert_{p}^p&\leq \bigg\Vert \frac{\D_h^\a (f-T_n)}{h^\a}\bigg\Vert_{p}^p\\
&+
\bigg\Vert \frac{\D_h^\a T_n}{h^\a}-T_n^{(\a)}\bigg\Vert_{p}^p+\Vert g-T_n^{(\a)}\Vert_{p}^p\\
&=J_1+J_2+J_3\,.
\end{split}
\end{equation}
Using~\eqref{eqM1} and (\ref{eqlemditi2}), we get
\begin{equation}\label{eqlemditi3}
\begin{split}
      J_1=\bigg\Vert \frac{\D_h^\a
    (f-T_n)}{h^\a}\bigg\Vert_p^p\le Ch^{-\a p}\Vert
    f-T_n\Vert_p^p \le C\e^{(1-\l\a)p},
\end{split}
\end{equation}
\begin{equation}\label{eq.lem.th2.3.5}
  J_3\leq\e^p.
\end{equation}
Let us consider $J_2$.
Set
\begin{equation}\label{ravenstvo}
    T_{n,h,\a}(t)=\frac{\D_h^\a
    T_n(t)}{h^\a}-T_n^{(\a)}(t).
\end{equation}
It is easy to see (here and throughout we use the principal branch of the logarithm) that
$$
T_{n,h,\a}(t)=\sum_{k=-n}^n (ik)^\a
\left(\left(\frac{1-e^{-ikh}}{ikh}\right)^\a-1\right) c_k e^{ikt},
$$
where $\{c_k\}_{k=-n}^n$ are the coefficients of $T_n$.
We also have the following equality
$$
T_{n,h,\a}(t)=(K_{h,\a}* T_n^{(\a)})(t),
$$
where
$$
K_{h,\a}(t)=\sum_{k\in \Z} \eta_{\a,\e}(hk) e^{ikt},
\quad
\eta_{\a,\e}(x)=\left(\left(\frac{1-e^{-ix}}{ix}\right)^{\a}-1\right)v\left(\frac
x{2\e^\l}\right),
$$
and the function $v$ is such that $|v(x)|\le 1$, $v\in C^\infty (\mathbb{R})$, $v(x)=1$ for
$|x|\le 1$ and $v(x)=0$ for $|x|\ge 2$.

Note that $K_{h,\a}(x)T_n^{(\a)}(t-x)$ is a trigonometric polynomial of order at most $4n$ in variable~$x$. Thus, using Lemma~\ref{lemNikpq}, we obtain
\begin{equation*}
  \begin{split}
     |T_{n,h,\a}(t)|^p&\le \bigg(\frac1{2\pi}\int_\T |K_{h,\a}(x) T_n^{(\a)}(t-x)|dx\bigg)^p\\
     &\le Cn^{1-p}\int_\T |K_{h,\a}(x) T_n^{(\a)}(t-x)|^p dx.
   \end{split}
\end{equation*}
Integrating the above inequality by $t$ and applying Fubini's theorem, we get
\begin{equation}\label{eq777}
\Vert T_{n,h,\a}\Vert_p\le C
    n^{\frac1p-1}\Vert K_{h,\a}\Vert_p \Vert T_n^{(\a)}\Vert_p.
\end{equation}
Now, let us consider the function $\eta_{\a,\e}$. Noting that for sufficiently small $x$
$$
\frac12\le \Big|\frac{e^{-ix}-1}{ix}\Big|\le 1,
$$
we derive for any $s=0,1,\dots$ the following estimates
\begin{equation*}
  \begin{split}
     |\eta_{\a,\e}^{(s)}(x)|&=\bigg|\sum_{\nu=0}^s \binom{s}{\nu} \bigg( \Big( \frac{e^{-ix}-1}{ix}\Big)^\alpha-1\bigg)^{(\nu)} \(v\Big(\frac{x}{2\e^\l}\Big)\)^{(s-\nu)} \bigg|\\
     &\le \frac1{\e^{s\lambda}} \sum_{\nu=0}^s c_{\nu,s,\a} \Big|v^{(s-\nu)} \Big(\frac{x}{2\e^\l}\Big)\Big|.
   \end{split}
\end{equation*}
Thus, it is easy to see that for any  $1<q<\infty$, $1<r<\infty$, and $s\in \N$, we have
$$
\Vert \eta_{\a,\e}\Vert_{L_q(\R)}\le C \e^{\frac\l q}\quad\text{and}\quad
\Vert \eta_{\a,\e}^{(s)}\Vert_{L_r(\R)}\le C \e^{\frac\l r-\l s}.
$$
Thus, by Lemmas~\ref{lemBL} and~\ref{lemFurp}, we derive
\begin{equation}\label{eqlemditi5}
    \begin{split}
 n^{\frac1p-1}\Vert K_{h,\a}\Vert_p&\le C \e^{\l(\frac1p-1)}h^{1-\frac1p}\Vert
K_{h,\a}\Vert_p\le C \e^{\l(\frac1p-1)} \Vert
\widehat{\eta_{\a,\e}}\Vert_{L_p(\R)}\\
&\le C\e^{\l((1-\t)\frac1q+\t\frac1r-\frac12)}=C\e^{\g}.
    \end{split}
\end{equation}
It is obvious that we can choose $q$ and $r$ such that $\g=\l((1-\t)/q+\t/r-1/2)>0$.
Then, using~(\ref{eq777}) and (\ref{eqlemditi5}), we get  $\Vert T_{n,h,\a}\Vert_p\le C\e^{\g}\Vert T_n^{(\a)}\Vert_p$. From this inequality,
tacking into account~\eqref{eqlemditi2} and~\eqref{ravenstvo}, we obtain
\begin{equation}\label{eqlemditi777}
J_2\le
C\e^{\g p}(\e^p+\Vert g\Vert_p^p).
\end{equation}

Finally, combining inequalities~\eqref{eq.lem.th2.3}--\eqref{eq.lem.th2.3.5} and
(\ref{eqlemditi777}), we derive
\begin{equation*}
\bigg\Vert \frac{\D_h^\a f}{h^\a}-g\bigg\Vert_p\le
C(\e^{1-\l\a}+\e^{\g}\Vert g\Vert_p+\e).
\end{equation*}
The last inequality implies that $f^{(\a)}=g$ in the sense
(\ref{eqProizvLp}).
\end{proof}

The next proposition shows that conditions of Lemma~\ref{lem1w} are sharp.
\begin{proposition}\label{pr1w}
{\it Let $0<p<1$ and $\a \in \N \cup
(1/p-1,\infty)$. Then there exists $f_\a \in L_1(\T)$ and a sequence of polynomials  $T_{n,\a}\in \mathcal{T}_n$, $n\in \N$, such that
$f_\a^{(\a)}(x)\equiv {\rm const}\neq 0$ a.e. on $[0,\pi)$ and
$$
\Vert f_\a-T_{n,\a}\Vert_p=\mathcal{O}\(\frac1{n^\a}\),\quad\text{but}\quad
\Vert T_{n,\a}^{(\a)}\Vert_p\to 0\quad \text{as}\quad n\to\infty,
$$
where $\asymp$ is a two-sided inequality with positive constants independent of $n$.}
\end{proposition}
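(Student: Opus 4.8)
The plan is to construct $f_\a$ explicitly from a lacunary-type trigonometric series whose partial sums will serve as the polynomials $T_{n,\a}$, exploiting the fact that in $L_p$, $0<p<1$, a "very spread out" sum of bumps can have small norm. Concretely, I would look for $f_\a$ of the form
\begin{equation*}
f_\a(x)=\sum_{j=1}^\infty a_j\,\psi\!\left(2^j(x-x_j)\right),
\end{equation*}
where $\psi$ is a fixed smooth $2\pi$-periodic bump (or a fixed trigonometric polynomial), the centers $x_j$ are chosen to make the translated bumps have essentially disjoint supports, and the coefficients $a_j$ decay so that $\sum_j|a_j|^p<\infty$ but $\sum_j a_j$ diverges appropriately --- for instance $a_j$ comparable to $j^{-2/p}$. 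The key point of the construction is a mismatch between the $L_p$-quasinorm (which, thanks to quasi-subadditivity and disjoint supports, sees only $\big(\sum_j|a_j|^p 2^{-j}\big)^{1/p}$ after rescaling) and the action of $f_\a^{(\a)}$: differentiating $\a$ times multiplies the $j$-th bump by roughly $2^{\a j}a_j$, and by choosing the scales and coefficients so that these grow, the derivative $T_{n,\a}^{(\a)}$ of a partial sum nonetheless has $L_p$-quasinorm tending to $0$ because of the $2^{-j/p}$ gain from the narrow support together with $\sum_j |a_j 2^{\a j}|^p 2^{-j}<\infty$ being arranged via the restriction $\a\in\N\cup(1/p-1,\infty)$.

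First I would fix $\psi$ and verify that each summand $\psi(2^j(x-x_j))$, being a translate/dilate of a fixed function, can be taken to be a trigonometric polynomial of order $\asymp 2^j$ (or approximated by one with exponentially small error, which is harmless), so that $T_{n,\a}:=\sum_{2^j\le n} a_j\psi(2^j(\cdot-x_j))\in\mathcal{T}_{Cn}$ after a trivial rescaling of $n$. Second, I would choose the centers $x_j$ inductively inside a fixed half-interval, say clustering towards $0$, so that the supports of the bumps are pairwise disjoint up to exponentially small overlap; this is possible since the $j$-th bump has width $\asymp 2^{-j}$. Third, using quasi-subadditivity of $\|\cdot\|_p^p$ and the dilation identity $\|\psi(2^j\cdot)\|_p^p\asymp 2^{-j}\|\psi\|_p^p$, I would estimate $\|f_\a-T_{n,\a}\|_p^p\le C\sum_{2^j>n}|a_j|^p 2^{-j}$ and check that with $a_j\asymp j^{-2/p}$ (or a suitable power) this is $\mathcal{O}(n^{-\a p})$; the same computation with an extra factor $2^{\a j p}$ from differentiation gives $\|T_{n,\a}^{(\a)}\|_p^p\le C\sum_{2^j\le n}|a_j|^p 2^{(\a p-1)j}$, and since $\a p-1>0$ in the admissible range (for $\a>1/p-1$; the integer case being handled by the same geometric series argument using that the bumps remain well separated) the sum is dominated by its last term $\asymp |a_{\log_2 n}|^p n^{\a p-1}\to 0$. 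Fourth, I would verify $f_\a^{(\a)}\equiv\mathrm{const}\neq 0$ a.e.\ on $[0,\pi)$: here the cleanest route is to instead let $f_\a$ itself be built so that its $\a$-th derivative in the sense of $L_p$ is a nonzero constant --- e.g.\ take $f_\a$ to be (a corrected version of) the function whose $\a$-th Weyl derivative is $\sign\sin x$ minus its mean, or more simply to adapt Krotov's "staircase" example from Proposition~\ref{propKrot}: a jump function has vanishing (distributional, hence $L_p$) derivative, so a primitive-of-order-$\a$ of a constant plus a Krotov-type staircase will have the required properties while still admitting the polynomial approximants above.

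The main obstacle I anticipate is reconciling the two demands on $f_\a$: it must be approximable at rate $n^{-\a}$ by polynomials whose $\a$-th derivatives vanish in norm, yet it must genuinely possess a \emph{nonzero constant} $\a$-th derivative. These are only compatible because $L_p$, $0<p<1$, is pathological --- the norm of $T_{n,\a}^{(\a)}$ is measured "before" passing to the limit, and the limiting identity $f_\a^{(\a)}=\mathrm{const}$ is witnessed by the difference quotients $\Delta_h^\a f_\a/h^\a$, not by the $T_{n,\a}^{(\a)}$. So the delicate step is to show $\|\Delta_h^\a f_\a/h^\a - c\|_p\to0$ directly (for the appropriate constant $c$), which I would do by splitting $f_\a$ into a part that is a genuine $\C^\infty$ function (or a trigonometric polynomial) carrying the constant-derivative behaviour and a "defect" part consisting of the staircase/bump series, for which $\|\Delta_h^\a(\text{defect})/h^\a\|_p\to0$ follows from the same disjoint-support, small-$L_p$-norm estimates used above together with the Nikolskii--Stechkin inequality of Lemma~\ref{lem1}. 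Once the two halves are separated, each is routine, and assembling them with quasi-subadditivity completes the proof; one last bookkeeping check is that $f_\a\in L_1(\T)$, which is immediate since $\sum_j|a_j|\,\|\psi(2^j\cdot)\|_1\asymp\sum_j|a_j|2^{-j}<\infty$.
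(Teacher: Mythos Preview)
Your proposal does not give a working construction; the two halves of the plan do not fit together, and neither half by itself delivers all of the required properties.

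The lacunary bump series $\sum_j a_j\psi(2^j(x-x_j))$ that you start with cannot satisfy $f_\a^{(\a)}\equiv\text{const}$ on $[0,\pi)$: the $\a$-th derivative of such a series is again a sum of rescaled bumps, not a constant. You notice this, and in the fourth step you abandon the bumps for ``a primitive-of-order-$\a$ of a constant plus a Krotov-type staircase''. That new $f_\a$ does have $f_\a^{(\a)}=\text{const}$ in the sense of $L_p$, but now you no longer have any candidate polynomials: the partial sums of the bump series are irrelevant to this new function, and you never say what the $T_{n,\a}$ should be. The ``split $f_\a$ into smooth part plus defect'' paragraph does not fix this, because a \emph{fixed} defect $h$ (independent of $n$) gives no mechanism for producing approximating polynomials whose $\a$-th derivatives tend to zero. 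Also, your numerics are off: with $a_j\asymp j^{-2/p}$ the tail $\sum_{2^j>n}|a_j|^p2^{-j}\asymp(\log n)^{-2}n^{-1}$, which is \emph{not} $\mathcal O(n^{-\a p})$ once $\a p>1$; and your derivative bound $\sum_{2^j\le n}|a_j|^p2^{(\a p-1)j}$ tends to zero only when $\a p<1$.

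The idea you are missing is to let the auxiliary object depend on $n$. The paper fixes $f_r(x)=x^r$ on $[0,\pi)$ (so $f_r^{(r)}=r!$ there) and constructs, for each $n$, a function $\varphi_{n,r}$ which agrees with $f_r$ except on a set of total measure $\asymp n^{-r}$, on which $\varphi_{n,r}$ is a steep piecewise-linear correction. This gives $\|f_r-\varphi_{n,r}\|_p=\mathcal O(n^{-r})$ and, crucially, $\omega_r(\varphi_{n,r},n^{-1})_p\le Cn^{-r-1/p+1}$ (the derivative $\varphi_{n,r}^{(r)}$ lives on a tiny set, so its $L_p$ norm is small). One then takes $T_{n,r}$ to be a polynomial of best approximation of $\varphi_{n,r}$ --- not of $f_r$ --- and uses the Jackson inequality together with the Nikol'skii--Stechkin inequality of Lemma~\ref{lem1} to get $\|T_{n,r}^{(r)}\|_p\le Cn^r\omega_r(\varphi_{n,r},n^{-1})_p\le Cn^{1-1/p}\to 0$, while $\|f_r-T_{n,r}\|_p\le C(\|f_r-\varphi_{n,r}\|_p+E_n(\varphi_{n,r})_p)=\mathcal O(n^{-r})$. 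The fractional case $\a\notin\N$ is then obtained by taking $r>\a$ and setting $f_\a=f_r^{(r-\a)}$, $T_{n,\a}=T_{n,r}^{(r-\a)}$, using the mechanism of Theorem~\ref{thsec3.A} to pass the estimates through the derivative. Your proposal contains none of this $n$-dependent intermediary, which is the essential point.
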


\begin{proof}
We will use some ideas from~\cite{KP}.
Let $r\in \N$. Set
$$
f_r(x)=\left\{
       \begin{array}{ll}
        \displaystyle x^r, & \hbox{$x\in [0,\pi)$,} \\
        \displaystyle (2\pi-x)^r, & \hbox{$x\in [\pi,2\pi]$,}
       \end{array}
     \right.
$$
and
\begin{equation*}
g_{n,r}(x)=\left\{%
         \begin{array}{ll}
           \displaystyle\frac kn x^{r-1}, & \hbox{$\displaystyle\frac kn\le x <\frac{k+1}{n}-\frac1{n^{r+1}}$,} \\
           \phantom{.}\\
           \displaystyle\frac kn x^{r-1}+x^{r-1}\left(x-\frac{k+1}{n}+\frac1{n^{r+1}}\right)n, & \hbox{$\displaystyle\frac{k+1}{n}-\frac1{n^{r+1}}\le
x<\frac{k+1}{n}$,}\\
         \end{array}%
       \right.
\end{equation*}
for $k=0,1,\dots,n-1$, $g_{n,r}(x)=1-g_{n,r}(x-1)$ for $1<x\le 2$, and
$$
\varphi_{n,r}(x)=\pi g_{n,r}\(\frac{x}{\pi}\)\quad \text{for}\quad x\in
[0,2\pi).
$$

We need the following inequalities
\begin{equation}\label{eqKop+}
    \omega_r(\varphi_{n,r},n^{-1})_q\le Cn^{-r}\Vert \varphi_{n,r}^{(r)}\Vert_q\le
Cn^{-r-\frac1q+1},\quad 0<q<\infty .
\end{equation}
The first inequality can be found in~\cite{Kop}, the second one can be verified by simple calculation. It is also easy to see that
\begin{equation}\label{zzzzzzzz}
  \Vert f_r -\varphi_{n,r}\Vert_p=\mathcal{O}\left({n^{-r}}\right).
\end{equation}

Let $T_{n,r}\in \mathcal{T}_n$ be a polynomial of the best approximation of
$\varphi_{n,r}$ in $L_p$.
Using~\eqref{eqJ}, \eqref{zzzzzzzz}, and (\ref{eqKop+}), we obtain
\begin{equation}\label{eqvL1}
\begin{split}
   \Vert f_r -T_{n,r}\Vert_p
   &\le C(\Vert
f_r-\varphi_{n,r}\Vert_p+\Vert \varphi_{n,r}-T_{n,r}\Vert_p)\\
&\le C(n^{-r}+\omega_r(\varphi_{n,r},n^{-1})_p)\le C n^{-r}.
\end{split}
\end{equation}
At the same time, by Lemma~\ref{lem1} and (\ref{eqKop+}), one has
\begin{equation}\label{eqvLpDer}
    \Vert T_{n,r}^{(r)}\Vert_p\le C n^r \omega_r(\varphi_{n,r},n^{-1})_p\le C
n^{1-\frac1p}.
\end{equation}
Thus, we have proved the proposition in the case $\alpha=r\in \N$.

Now let $\alpha\not\in \N$. Choose $r\in \N$ such that
$r>\alpha$ and denote $f_\alpha=f_r^{(r-\alpha)}$ and
$T_{n,\a}=T_{n,r}^{(r-\a)}$.
Note that if $f\in L_p(\mathbb{T})$, $\gamma>\b>1/p-1$, and $T_{n}\in \mathcal{T}_n$, $n\in \N$,
are such that
\begin{equation}\label{-}
  \Vert f-T_{n}\Vert_p=\mathcal{O}(n^{-\gamma})\quad\text{as}\quad n\to
\infty,
\end{equation}
then $f$ has the derivative $f^{(\b)}$ in the sense of $L_p$ and
\begin{equation}\label{--}
  \Vert
f^{(\b)}-T_{n}^{(\b)}\Vert_p=\mathcal{O}(n^{-(\gamma-\b)})\quad\text{as}\quad
n\to\infty.
\end{equation}
This can be verified repeating the proof of Theorem~\ref{thsec3.A} presented below.
Thus, using
(\ref{eqvL1}) and taking into account~\eqref{-} and~\eqref{--},  we obtain
$$
\Vert f_\alpha -T_{n,\alpha}\Vert_p=\Vert f_r^{(r-\alpha)}
-T_{n,r}^{(r-\a)}\Vert_p\le Cn^{-\alpha}.
$$
At the same time, by (\ref{eqvLpDer}), we get
$$
\Vert T_{n,\alpha}^{(\alpha)}\Vert_p=\Vert
T_{n,r}^{(r)}\Vert_p\le Cn^{1-\frac1p}.
$$
The last two inequalities prove the proposition.
\end{proof}

\bigskip

\section{Proofs of the main results}

\begin{proof}[Proof of Theorem~\ref{thsec3.1}]
It is clear that we can assume that
\begin{equation}\label{eqthsec3.1.1-}
  \sum\limits_{\nu=1}^\infty \nu^{-p}E_\nu(f^{(\a)})_p^p<\infty\,.
\end{equation}

Let $U_n\in\mathcal{T}_n$ and $T_n \in\mathcal{T}_n$, $n\in\N$, be such that
$$
\|f^{(\a)}-U_n\|_p=E_n(f^{(\a)})_p
$$
and
$$
T_n^{(\a)}(x)=U_n(x)-\frac{1}{2\pi}\int_0^{2\pi} U_n(x){d}x.
$$

Choosing $m\in\N$ such that $2^{m-2}\leq n<2^{m-1}$, we derive
\begin{equation}\label{eqthsec3.1.3}
  E_n(f)_p^p\leq E_n(T_{2^m})_p^p+E_n(f-T_{2^m})_p^p\,.
\end{equation}
Let us estimate $E_n(T_{2^m})_p$. Set
$$
\tau_u(x)=\tau_{u,2^m,n}(x)=\D_u^1(T_{2^m}(x)-T_n(x)),\quad u>0\,.
$$
Applying~\eqref{eqJ} with $\b=\a+r$, $r>1/p$, and Lemma~\ref{LemKfunc}, we obtain
\begin{equation}\label{eqthsec3.1.4}
\begin{split}
E_n(T_{2^m})_p&=E_n(T_{2^m}-T_n)_p\leq C\w_{\a+r}(T_{2^m}-T_n,\,n^{-1})_p\\
&=C \sup\limits_{0<h\leq n^{-1}}\|\D_h^{\a+r-1} \tau_h\|_p\leq C \sup\limits_{0<h\leq n^{-1}} \sup\limits_{u>0}\|\D_h^{\a+r-1} \tau_u\|_p\\&\leq C \sup\limits_{u>0}\w_{\a+r-1}(\tau_u,\,n^{-1})_p\leq C \sup\limits_{u>0}\mathcal{R}_{\a+r-1}(\tau_u,\,n^{-1})_p\,.
\end{split}
\end{equation}
Next, let $V_n\in \mathcal{T}_n$, $n\in \N$, be such that
\begin{equation}\label{eqKfunct+}
  \Vert \tau_u-V_n\Vert_p+n^{-\a}\Vert V_n^{(\a)}\Vert_p\leq 2 \mathcal{R}_\a(\tau_u,n^{-1})_p.
\end{equation}
Then, by the definition of the realization $\mathcal{R}_\a$, using Lemmas~\ref{lemBLf} and~\ref{LemKfunc+}, inequalities~\eqref{eqKfunct+} and~(\ref{eqM1}), and  taking into account that $\tau_u\in \mathcal{T}_{2^m}$ for any fixed $u>0$, we get
\begin{equation}\label{eqthsec3.1.5}
\begin{split}
\mathcal{R}_{\a+r-1}(\tau_u,\,n^{-1})_p&\leq \Vert \tau_u -V_n\Vert_p+n^{-(\a+r-1)}\Vert V_n^{(\a+r-1)}\Vert_p\\
&\leq \Vert \tau_u -V_n\Vert_p+C n^{-\a}\Vert V_n^{(\a)}\Vert_p\leq C \mathcal{R}_\a(\tau_u,n^{-1})_p\\
&\leq C \mathcal{R}_\a(\tau_u,2^{-m})_p
\leq C 2^{-m\a}\|\tau_u^{(\a)}\|_p\\
&=C 2^{-m\a}\|\D_u^1(T_{2^m}^{(\a)}-T_n^{(\a)})\|_p=C 2^{-m \a}\|\D_u^1(U_{2^m}-U_n)\|_p\\
&\leq C n^{-\a}\|U_{2^m}-U_n\|_p\leq C n^{-\a} E_n(f^{(\a)})_p\,.
\end{split}
\end{equation}
Combining (\ref{eqthsec3.1.4}) and (\ref{eqthsec3.1.5}), we derive
\begin{equation}\label{eqthsec3.1.6}
E_n(T_{2^m})_p\leq C n^{-\a} E_n(f^{(\a)})_p\,.
\end{equation}

Now, let us consider the second term in the right-hand side of (\ref{eqthsec3.1.3}).
For any $N>m$, we have
\begin{equation}
\label{eqthsec--3.1.8}
E_n(f-T_{2^m})_p^p\leq \sum\limits_{\mu=m}^{N-1} E_n(T_{2^{\mu+1}}-T_{2^\mu})_p^p+E_n(f-T_{2^N})_p^p\,.
\end{equation}
Using H\"older's inequality and~\eqref{+E}, we obtain
\begin{equation}\label{XXXX}
\begin{split}
E_n\(f-T_{2^N}\)_p&\leq C E_n\(f-T_{2^N}\)_1\leq C n^{-\a} E_n(f^{(\a)}-T_{2^N}^{(\a)})_1\\
&= C n^{-\a} E_n(f^{(\a)}-U_{2^N})_1\leq C n^{-\a} \|f^{(\a)}-U_{2^N}\|_1\,.
\end{split}
\end{equation}
Let us show that $\|f^{(\a)}-U_{2^N}\|_1\to 0$ as $N\to \infty$.
By Lemma~\ref{lemNikpq}, we have
\begin{equation*}
\begin{split}
\sum\limits_{\mu=1}^\infty\|U_{2^{\mu+1}}-U_{2^\mu}\|_1^p&\leq C\sum\limits_{\mu=1}^\infty 2^{(1-p)\mu}\|U_{2^{\mu+1}}-U_{2^\mu}\|_p^p\\
&\leq C\sum\limits_{\mu=1}^\infty 2^{(1-p)\mu} E_{2^\mu}(f^{(\a)})_p^p\leq C\sum\limits_{\nu=1}^\infty \nu^{-p} E_{\nu}(f^{(\a)})_p^p\,.
\end{split}
\end{equation*}
In view of~\eqref{eqthsec3.1.1-}, this implies that there exists $g\in L_1(\T)$ such that
$U_{2^\mu}\rightarrow g$ as $\mu\rightarrow\infty$ in $L_1(\T)$. By the definition of $U_n$, we know that $U_{2^\mu}\rightarrow f^{(\a)}$ as
$\mu\rightarrow\infty$ in $L_p(\T)$. Therefore, $g=f^{(\a)}$ a.e. on $\T$ and
\begin{equation}\label{zvezda}
  U_{2^\mu}\rightarrow f^{(\a)}\quad \text{as}\quad \mu\rightarrow\infty\quad \text{in}\quad L_1(\T).
\end{equation}
Thus, combining~\eqref{eqthsec--3.1.8} and~\eqref{XXXX} and taking into account~\eqref{zvezda},  we get
\begin{equation}\label{eqthsec3.1.7}
E_n(f-T_{2^m})_p^p\leq \sum\limits_{\mu=m}^\infty E_n(T_{2^{\mu+1}}-T_{2^\mu})_p^p\,.
\end{equation}

Now, applying the same arguments as in \eqref{eqthsec3.1.4} and (\ref{eqthsec3.1.5}) to the function
$\tau_u(x)=\tau_{u,2^{\mu+1},2^\mu}(x)=\D_u^1(T_{2^{\mu+1}}(x)-T_{2^\mu}(x))$, we derive
\begin{equation}\label{eqthsec3.1.10}
\begin{split}
E_n(T_{2^{\mu+1}}-T_{2^\mu})_p& \leq C\w_{\a+r}(T_{2^{\mu+1}}-T_{2^\mu},\,n^{-1})_p\leq C \sup\limits_{u>0}\mathcal{R}_\a(\tau_u,\,n^{-1})_p\\
&\leq C(2^{\mu+1} n^{-1})^{\a+\frac{1}{p}-1}\sup\limits_{u>0}\mathcal{R}_\a(\tau_u,\,2^{-\mu-1})_p\\
&\leq C n^{-\a-\frac{1}{p}+1}2^{\mu\(\frac{1}{p}-1\)}\sup\limits_{u>0}\|\tau_u^{(\a)}\|_p.\\
\end{split}
\end{equation}
Note that in the third inequality, we use Lemma~\ref{LemKfunc+} and  take into account that $n<2^{m-1}\le 2^{\mu+1}$. Next, by (\ref{eqM1})
\begin{equation}\label{+eqthsec3.1.10+}
\begin{split}
\|\tau_u^{(\a)}\|_p&=\Vert \D_u^1 (T_{2^{\mu+1}}^{(\a)}-T_{2^{\mu}}^{(\a)})\Vert_p=\Vert \D_u^1 (U_{2^{\mu+1}}-U_{2^{\mu}})\Vert_p\\
&\leq C\Vert U_{2^{\mu+1}}-U_{2^{\mu}}\Vert_p \leq  CE_{2^\mu}(f^{(\a)})_p\,.
\end{split}
\end{equation}
Combining (\ref{eqthsec3.1.7})--(\ref{+eqthsec3.1.10+}), we obtain
\begin{equation}\label{eqthsec3.1.11}
\begin{split}
E_n(f-T_{2^m})_p^p& \leq Cn^{-\a p-1+p}\sum\limits_{\mu=m}^\infty 2^{(1-p)\mu} E_{2^\mu}(f^{(\a)})_p^p\\
&\leq Cn^{-\a p-1+p}\sum\limits_{\nu=n+1}^\infty \nu^{-p} E_{\nu}(f^{(\a)})_p^p\,.
\end{split}
\end{equation}

Finally, combining (\ref{eqthsec3.1.3}), (\ref{eqthsec3.1.6}), and (\ref{eqthsec3.1.11}), we get (\ref{eqthsec3.1.2}).
\end{proof}

\begin{proof}[Proof of Theorem~\ref{thsec3.A}]
Let $N\in \N$ be such that $2^{N-1}\le n<2^N$. Assuming for a moment
that $f^{(\a)}$ exists, we get
\begin{equation}\label{eqth2S1}
  \Vert f^{(\a)}-T_n^{(\a)}\Vert_p^p\le \Vert f^{(\a)}-T_{2^N}^{(\a)}\Vert_p^p+\Vert T_{2^N}^{(\a)}-T_n^{(\a)}\Vert_p^p.
\end{equation}

By Lemma~\ref{lemBLf}, we obtain
\begin{equation}\label{eqth2S2}
\begin{split}
  \Vert T_{2^N}^{(\a)}-T_n^{(\a)}\Vert_p^p\le C2^{\a Np}  \Vert T_{2^N}-T_n\Vert_p^p
  \le Cn^{\a p} E_{n}(f)_p^p
\end{split}
\end{equation}
and
\begin{equation}\label{eqth2S3}
\begin{split}
\sum_{\nu=N}^\infty \Vert T_{2^{\nu+1}}^{(\a)}-T_{2^\nu}^{(\a)}\Vert_p^p  &\le
C \sum_{\nu=N}^\infty 2^{\a p\nu} \Vert
T_{2^{\nu+1}}-T_{2^\nu} \Vert_p^p\\
&\le C \sum_{\nu=N}^\infty 2^{\a p\nu}E_{2^\nu}(f)_p^p\,.
\end{split}
\end{equation}
Thus, by the completeness of $L_p(\T)$ and condition
(\ref{eqthsec3.A.1}), there exists a function $g\in L_p(\T)$ such that
\begin{equation}\label{eqprthModFrD.7}
\begin{split}
\Vert g-T_{2^N}^{(\a)}\Vert_p=\lim_{l\to\infty}\Vert
T_{2^l}^{(\a)}-T_{2^N}^{(\a)}\Vert_p\le C \(\sum_{\nu=N}^\infty
2^{\a p\nu}E_{2^\nu}(f)_p^p\)^\frac 1p.
\end{split}
\end{equation}
In~\eqref{eqprthModFrD.7}, we use the equality $
T_{2^l}-T_{2^N}=\sum_{\nu=N}^{l-1}(T_{2^{\nu+1}}-T_{2^\nu})$ and~\eqref{eqth2S3}. It is also easy to see that
\begin{equation}\label{eqprthModFrD.8}
\begin{split}
\Vert f-T_{2^N}\Vert_p\leq C 2^{-N \a}\(2^{N
\a}E_{2^N}(f)_p\)=o(2^{-N \a})\quad \mathrm{as}\quad N\to \infty.
\end{split}
\end{equation}
Therefore, by Lemma~\ref{lem1w},  (\ref{eqprthModFrD.8}), and (\ref{eqprthModFrD.7}),  we obtain that $g=f^{(\a)}$.

Finally, combining (\ref{eqth2S1}), (\ref{eqth2S2}), and (\ref{eqprthModFrD.7}), we get (\ref{eqthsec3.A.2}).
\end{proof}

\begin{proof}[Proof of Theorem~\ref{propanal1}]
The proof is similar to the proof of Theorem~\ref{thsec3.A} using Lemma~\ref{lemBLf} for all $\a>0$.
\end{proof}

\begin{proof}[Proof of Theorem~\ref{thsec3.2}]
We prove the theorem only in the case $\a\in \N\cup (1/p-1,\infty)$. The other cases for $\a$ can be obtained repeating the arguments presented below.

Using (\ref{eqthsec3.1.2}), we obtain
\begin{equation}\label{eqthsec3.2.3}
\begin{split}
&\sum\limits_{\nu=n+1}^\infty\nu^{\a p-1}E_\nu(f)_p^p\\
&\leq C\sum\limits_{\nu=n+1}^\infty \(\nu^{-1}E_\nu(f^{(\a)})_p^p+\nu^{p-2}\sum\limits_{\mu=\nu+1}^\infty \mu^{-p}E_\mu(f^{(\a)})_p^p\)\\
&\leq C\sum\limits_{\nu=n+1}^\infty \nu^{p-1}\nu^{-p}E_\nu(f^{(\a)})_p^p+C\(\sum\limits_{\nu=n+1}^\infty\nu^{p-2}\)\sum\limits_{\mu=n+1}^\infty \mu^{-p}E_\mu(f^{(\a)})_p^p\\
&\leq Cn^{p-1}\sum\limits_{\nu=n+1}^\infty \nu^{-p}E_\nu(f^{(\a)})_p^p\,.
\end{split}
\end{equation}
Therefore, combining (\ref{eqthsec3.A.2+A}), (\ref{eqthsec3.1.2}), and (\ref{eqthsec3.2.3}), we get (\ref{eqthsec3.2.2}).
\end{proof}

\begin{proof}[Proof of Theorem~\ref{thDif}]

Let $n\in \N$ be such that $1/(n+1)<\d\le 1/n$ and let $T_n\in\mathcal{T}_n$ be polynomials of the best approximation of $f$ in $L_p(\T)$.
By~(\ref{eqM0}), we get
\begin{equation}\label{eqth1T.1}
\begin{split}
  \w_{\a+\b}(f,\d)_p^p&\le \w_{\a+\b}(f,1/n)_p^p\\
  &\le \w_{\a+\b}(f-T_n,1/n)_p^p+\w_{\a+\b}(T_n,1/n)_p^p=M_1+M_2.
\end{split}
\end{equation}
Using Lemma~\ref{lem1}, (\ref{eqM0}), and (\ref{eqM1}), we obtain
\begin{equation}\label{eqDif5T}
\begin{split}
    M_2&\le  Cn^{-\a p}\w_\b (T_n^{(\a)},1/n)_p^p\\
&\le Cn^{-\a p}\(\Vert f^{(\a)}-T_n^{(\a)}\Vert_p^p+\w_\b(f^{(\a)},1/n)_p^p\).
\end{split}
\end{equation}
Next, by Theorem~\ref{thsec3.2} and the Jackson--type inequality~\eqref{eqJ}, we have
\begin{equation}\label{eqDif6T}
\begin{split}
\Vert f^{(\a)}-T_n^{(\a)}\Vert_p^p&\le C \(\w_r(f^{(\a)},1/n)_p^p+n^{p-1}\sum\limits_{\nu=n+1}^\infty \nu^{-p}\w_r(f^{(\a)},1/\nu)_p^p\)\\
&\le Cn^{p-1}\int_0^{1/n} \frac{\w_r(f^{(\a)},t)_p^p}{t^{2-p}}{d}t.
\end{split}
\end{equation}
At the same time, by~(\ref{eqM1}),  Theorem~\ref{thsec3.1}, and~\eqref{eqJ}, we derive
\begin{equation}\label{eqDif7T}
\begin{split}
M_1&\le C\Vert f-T_n\Vert_p^p\\
&\le Cn^{-\a p}\(\w_r(f^{(\a)},1/n)_p^p+n^{p-1}\sum\limits_{\nu=n+1}^\infty\nu^{-p}\w_r(f^{(\a)},1/\nu)_p^p\) \\
&\le Cn^{p-1-\a p}\int_0^{1/n} \frac{\w_r(f^{(\a)},t)_p^p}{t^{2-p}}{d}t.
\end{split}
\end{equation}
Thus, combining (\ref{eqth1T.1})--(\ref{eqDif7T}) and taking into
account (\ref{eqM2}) and $1/(n+1)<\d\le 1/n$, we get~(\ref{eqDif1}).
\end{proof}

\begin{proof}[Proof of Theorem~\ref{thModFrD}]
The proof is similar to the proof of Theorem~\ref{thDif} combining Theorem~\ref{thsec3.A}, Lemma~\ref{lem1}, and the Jackson inequality~\eqref{eqJ}.
\end{proof}

\begin{proof}[Proof of Theorem~\ref{thModFrD10}]
The proof is similar to the proof of Theorems~\ref{thDif} and~\ref{thModFrD}. We only note that we use Theorem~\ref{propanal1} instead of Theorem~\ref{thsec3.A}.
\end{proof}

\begin{proof}[Proof of Theorem~\ref{thsec3.1++++}] The proof easily follows from inequality~\eqref{eqJ} and Theorem~\ref{thsec3.1}.
\end{proof}

\begin{proof}[Proof of Theorem~\ref{thsec3.A++++}]
In view of~\eqref{eqthsec3.A.1++++} and Theorem~\ref{thsec3.A}, the function  $f$ has the derivative $f^{(\a)}$ in the sense of $L_p$.
Using inequalities~\eqref{eqthsec3.A.2} and~\eqref{eqB}, we obtain
\begin{equation}\label{mo1}
  \begin{split}
    \w_\b(f^{(\a)},n^{-1})_p^p&\le Cn^{-\b p}\sum_{\nu=0}^n (\nu+1)^{\b p-1}E_\nu(f^{(\a)})_p^p\\
   &\le Cn^{-\b p}\sum_{\nu=0}^n (\nu+1)^{\b p-1}\bigg((\nu+1)^{\a p}E_\nu(f)_p^p+\sum_{\mu=\nu+1}^\infty \mu^{\a p-1}E_\mu(f)_p^p\bigg)\\
   &=Cn^{-\b p}\Bigg(\sum_{\nu=0}^n (\nu+1)^{\a p+\b p-1}E_\nu(f)_p^p\\
&\quad\quad\quad\quad+\sum_{\nu=0}^n (\nu+1)^{\b p-1}\bigg(\sum_{\mu=\nu}^n+ \sum_{\mu={n+1}}^\infty\bigg) (\mu+1)^{\a p-1}E_{\mu+1}(f)_p^p\Bigg).
  \end{split}
\end{equation}
Further, we have
\begin{equation}\label{mo2}
  \begin{split}
   \sum_{\nu=0}^n (\nu+1)^{\b p-1}&\sum_{\mu=\nu}^n (\mu+1)^{\a p-1}E_{\mu+1}(f)_p^p\\
   &=\sum_{\mu=0}^n (\mu+1)^{\a p-1}E_{\mu+1}(f)_p^p \sum_{\nu=0}^\mu (\nu+1)^{\b p-1} \\
   &\le C\sum_{\mu=0}^n (\mu+1)^{\a p + \b p-1}E_{\mu}(f)_p^p.
  \end{split}
\end{equation}
At the same time, we derive
\begin{equation}\label{mo3}
  \begin{split}
   n^{-\b p}\sum_{\nu=0}^n (\nu+1)^{\b p-1}&\sum_{\mu=n+1}^\infty (\mu+1)^{\a p-1}E_{\mu+1}(f)_p^p\le C\sum_{\mu=n+1}^\infty\mu^{\a p-1}E_\mu(f)_p^p.
  \end{split}
\end{equation}

Finally, combining~\eqref{mo1}--\eqref{mo3}, we get~\eqref{eqthsec3.1.2++++}.
\end{proof}

\begin{proof}[Proof of Theorem~\ref{-thsec3.A++++}]
The proof is similar to the proof of Theorem~\ref{thsec3.A++++} by using inequality~\eqref{eqthsec3.A.2+A} instead of~\eqref{eqthsec3.A.2}.
\end{proof}

\begin{proof}[Proof of Theorem~\ref{th2}]

First, we show that $(i)$ implies $(ii)$.
By Ulynov's type inequality (see, e.g.,~\cite{diti}) and inequality~\eqref{eqM1}, for any $r>\b+1/p-1$, $r\in \N$, we have
\begin{equation*}
  \begin{split}
    \Vert f\Vert_1^p&\le C\(\int_0^1\(\frac{\w_r(f,t)_p}{t^{1/p-1}}\)^p\frac{dt}{t}+\Vert f\Vert_p^p\)\\
    &\le C\(\int_0^1\(\frac{\w_\b(f,t)_p}{t^{1/p-1}}\)^p\frac{dt}{t}+\Vert f\Vert_p^p\)\le C\(\int_0^1 t^{\b p-1}dt+\Vert f\Vert_p^p \)<\infty,
  \end{split}
\end{equation*}
that is $f\in L_1(\T)$.
Next, using Theorem~\ref{thModFrD} in the case $\b\ge 1$ and Theorem~\ref{thDif} in the case $\b<1$, we get
\begin{equation*}
    \w_1(f^{(\b-1)},\d)_p\le C\(\int_0^\d
    \frac{\w_\b(f,t)_p^p}{t^{(\b-1)p+1}}dt\)^\frac1p=\mathcal{O}(\d^\frac1p),\quad \b\ge 1,
\end{equation*}
and
\begin{equation*}
  \w_1(I_{1-\b}f,\d)_p\le C\d^{\frac1p-\b}\(\int_0^\d\frac{\w_\b(f,t)_p^p}{t^{2-p}}dt\)^\frac1p=\mathcal{O}(\d^\frac1p),\quad \b<1,
\end{equation*}
where, for the clarity, we use the notation
$$
I_{\a}f=f^{(-\a)}
$$
to denote the fractional integral of order $\a>0$. It only remains to apply Proposition~\ref{propKrot}.

Now, let us prove that $(ii)$ implies $(i)$.
Let
$$
f^{(\b-1)}(x)=d_0+\sum_{x_k<x} d_k=d_0+\sum_{k=1}^\infty d_k
h_{x_k}(x),
$$
where
$$
h_\eta(x)=\left\{
            \begin{array}{ll}
              1, & \hbox{$x>\eta$,} \\
              0, & \hbox{$x\le \eta$.}
            \end{array}
          \right.
$$
Then, we have
$f(x)=d_0'+\sum_{k=1}^\infty d_k I_{\b-1}h_{x_k}(x)$. Using~\eqref{eqM0}, we get
\begin{equation}\label{eqprth2.1}
    \w_\b (f,\d)_p^p\le \sum_{k=1}^\infty |d_k|^p \w_\b (I_{\b-1}
    h_{x_k},\d)_p^p.
\end{equation}

Next, for any $r\in \N$ and $\eta\in \R$, we have
\begin{equation}\label{eqprth2.2}
    \w_r (I_{r-1}
    h_{\eta},\d)_p\le C(p,r)\d^{r+\frac1p-1}
\end{equation}
(see, e.g.,~\cite{Ra} or~\cite[p.~359]{DeLo}).
Choose $\a>1/p-1$ such that  $\b+\a=r\in \N$. Then, applying Theorem~\ref{thModFrD} and (\ref{eqprth2.2}), we obtain
\begin{equation}\label{eqprth2.3}
\begin{split}
    \w_\b (I_{\b-1} h_\eta,\d)_p&\le C\(\int_0^\d \frac{\w_{\a+\b}(I_\a I_{\b-1} h_\eta,t)_p^p}{t^{\a
    p+1}}dt\)^\frac1p\\
    &=C\(\int_0^\d \frac{\w_{r}(I_{r-1} h_\eta,t)_p^p}{t^{\a
    p+1}}dt\)^\frac1p\le C(p,\a,\b)\d^{\b+\frac1p-1}.
\end{split}
\end{equation}

Finally, combining~(\ref{eqprth2.1})
and (\ref{eqprth2.3}), we prove the theorem.
\end{proof}

\end{document}